\documentclass{amsart}

\usepackage{amssymb}
\usepackage{amscd}
\usepackage{graphicx}
\usepackage{pstricks}
\usepackage[cmtip,all]{xy}  
\usepackage{enumitem}

\newtheorem{theorem}{Theorem}[section]
\newtheorem{lemma}[theorem]{Lemma}
\newtheorem{prop}[theorem]{Proposition}
\newtheorem{cor}[theorem]{Corollary}
\newtheorem*{theoremA}{Theorem A}
\newtheorem*{theoremB}{Theorem B}

\theoremstyle{definition}
\newtheorem{definition}[theorem]{Definition}

\theoremstyle{remark}

\newtheorem{obs}[theorem]{Observation}

\numberwithin{equation}{section}

\begin{document}
\title{Smale Spaces via Inverse Limits}
\author{Susana Wieler}
\address{Dept. of Mathematics and Statistics, University of Victoria,  3800 Finnerty Road, Victoria, B.C. Canada V8P 5C2}
\email{susanaw@uvic.ca}

\subjclass[2010]{37D20}

\date{June 4, 2012}

\begin{abstract}
A Smale space is a chaotic dynamical system with canonical coordinates of contracting and expanding directions.  The basic sets for Smale's Axiom $A$ systems are a key class of examples.  
We consider the special case of irreducible Smale spaces with zero dimensional contracting directions, and characterize these as stationary inverse limits satisfying certain conditions.  
\end{abstract}

\maketitle

\section{Introduction}

D. Ruelle \cite{Ruelle} defined Smale spaces in an effort to axiomatize the topological dynamics of the basic sets of an Axiom $A$ system.    The idea of moving from an Axiom $A$ system to a Smale space is motivated by the fact that the basic sets themselves are merely topological spaces and not submanifolds.  The main feature of a Smale space is its canonical coordinates of contracting and expanding directions, which are defined using a metric rather than differential topology.  

It is well-known that all totally disconnected Smale spaces are shifts of finite type.  And shifts of finite type are inverse limits of one-sided shifts of finite type, which were characterized by W. Parry \cite{Parry} as positively expansive open mappings of compact, totally disconnected metrizable spaces.  The natural next step is to consider Smale spaces which are totally disconnected in only one coordinate and to work towards a characterization of these as inverse limits of spaces satisfying certain conditions.  

R.F. Williams \cite{Williams} looked at expanding attractors.  He proved that these are basic sets which are totally disconnected in the contracting direction and Euclidean in the expanding direction.  He provided a construction using inverse limits of branched manifolds and also proved that (under appropriate hypotheses) all such basic sets arose from this construction.  We will be working in the metric setting of Smale spaces, but the goal is to extend Williams' results by removing all hypotheses on the unstable sets.  Williams relied very heavily on the smooth structures of branched manifolds in his conditions and proofs, and to adapt to the metric setting of Smale spaces, we really needed a whole new set of ideas.  We do not simply ignore the differentiable structure of Williams' systems and adapt his arguments accordingly; such a development, in dimension 1, is given in \cite{Yi}.

This paper is a summary of the my Ph.D. thesis \cite{thesis}.  It is a pleasure to thank my advisor Ian Putnam for many useful discussions.  In particular, Example 3 in Section \ref{results} was suggested by him.

\section{Background and Statement of Results}\label{results}

To provide some intuition for our results, we begin this section with a very brief review of Williams' conditions for his inverse limit spaces. This is followed by a proper definition of a Smale space and a statement of our conditions and results.  We finish with a number of examples to illustrate our conditions.  

\subsection{Williams' Inverse Limits}\label{Williams}
R. F. Williams \cite{Williams} defined an $n$-solenoid as an inverse limit
\[ \hat{K}=\underleftarrow{\mathrm{lim}}\; K \stackrel{g}{\longleftarrow} K \stackrel{g}{\longleftarrow} \cdots, \]
where $K$ is a compact Riemannian branched $C^r$ $n$-manifold and $g:K\rightarrow K$ is a $C^r$ immersion satisfying the following axioms:
\begin{enumerate}
\item $g$ is non-wandering,
\item $g$ is an expansion: there exist constants $A>0$ and $\mu>1$ such that for all $n\in \mathbb{N}$ and $k\in T(K)$, we have
$|Dg^n(k)|\geq A\mu^n|k|$, where $T(K)$ is the tangent space of $K$ and $Dg$ is the derivative of $g$, and  
\item $g$ is flattening: for each $x\in K$ there is a neighborhood $N$ of $x$ and $j\in \mathbb{Z}$ such that $g^j(N)$ is contained in a subset diffeomorphic to an open ball in $\mathbb{R}^n$.
\end{enumerate}
He proves that an $n$-solenoid locally has the structure of a (Cantor set)$\times$($n$-disk). Moreover, he proves that under certain conditions (which were later removed by H.G. Bothe in \cite{Bothe}) his expanding attractors are conjugate to $n$-solenoids.  

Intuitively, Williams' expansive and flattening conditions are contradictory.  This apparent dilemma is solved because it is the derivative, $Dg$, that is injective and expanding, while $g$ is flattening.  Since we are approaching this problem outside of the smooth structure of manifolds, we need entirely different conditions.

\subsection{Statement of Results}
\begin{definition}\label{Smale}
Let $(X,d)$ be a compact metric space, and let $f:X \rightarrow X$ be a homeomorphism.  For $x\in X$ and $\epsilon>0$, we denote 
\[X^s(x, \epsilon)= \lbrace y\in X \; | \; d(f^n(x), f^n(y))\leq \epsilon, \; n \geq 0 \rbrace\]
and
\[X^u(x, \epsilon)= \lbrace y\in X \; | \; d(f^{-n}(x), f^{-n}(y))\leq \epsilon, \; n \geq 0 \rbrace;\]
these are called the local stable and unstable sets of $x$, respectively.  The triple $(X,d,f)$ is a \textit{Smale space}  if there exist constants $\epsilon_X>0$, $\epsilon_X'>0$ and $0<\lambda <1$ such that
\begin{enumerate}
\item for all $x\in X$ and $n\geq 0$, $d(f^n(y), f^n(z))\leq \lambda^n d(y,z)$ if $y,z \in X^s(x, \epsilon_X')$,
\item for all $x\in X$ and $n\geq 0$, $d(f^{-n}(y), f^{-n}(z))\leq \lambda^n d(y,z)$ if $y,z \in X^u(x, \epsilon_X')$, and
\item if $d(x,y)\leq \epsilon_X$ then $X^s(x, \epsilon_X')\cap X^u(y, \epsilon_X')$ consists of a single point, denoted by $[x,y]$.  
\end{enumerate}
\end{definition}

The bracket map $[\cdot, \cdot ]: \lbrace (x,y)\in X\times X \; | \; d(x,y)\leq \epsilon_X \rbrace \rightarrow X$ is continuous.  Moreover, for small enough $\epsilon>0$, $[\cdot, \cdot ]$ restricted to $X^u(x, \epsilon)\times X^s(x,\epsilon)$ is a homeomorphism onto a neighborhood of $x$.  

\medskip

We want to show that certain Smale spaces are inverse limits.  To this end, we will construct stationary inverse limits of spaces satisfying the following two conditions, where the notation $B(x,r)$ denotes a \textit{closed} ball.  

Let $(Y,d)$ be a compact metric space, and let $g:Y\rightarrow Y$ be continuous and surjective.  
We will say that $(Y,d,g)$ satisfies Axioms 1 and 2 if there exist constants $\beta>0$, $K\geq 1$, and $0<\gamma < 1$ such that
\begin{description}
\item[Axiom 1]{\textit{if $d(x,y)\leq\beta$ then $$d(g^K(x),g^K(y))\leq\gamma^K d(g^{2K}(x), g^{2K}(y)),$$ and}}
\item[Axiom 2]{\textit{for all $x\in Y$ and $0<\epsilon\leq\beta$, $$g^K(B(g^K(x), \epsilon))\subseteq g^{2K}(B(x, \gamma\epsilon)).$$}}
\end{description}
Intuitively, Axioms 1 and 2 could be viewed as weakend versions of the conditions that $g$ be locally expanding and open, respectively.  
Locally expanding would be Axiom 1 if the inequality were replaced with $d(x, y)\leq\gamma^K d(g^{K}(x), g^{K}(y))$.  And Axiom 2, with the
containment replaced by $B(g^K(x), \epsilon)\subseteq g^{K}(B(x, \gamma\epsilon))$, would imply that $g^K$ (and hence $g$) is open.

We denote the stationary inverse limit associated with $(Y,g)$ by 
\begin{align*}
\hat{Y}& =\underleftarrow{\mathrm{lim}}\; Y \stackrel{g}{\longleftarrow} Y \stackrel{g}{\longleftarrow} Y \stackrel{g}{\longleftarrow} \cdots\\
&=\lbrace (y_0, y_1, y_2, \cdots ) \; | \; y_n\in Y, \; y_n=g(y_{n+1}) \; \forall \; n\geq 0\rbrace,
\end{align*}
and the natural mapping on $\hat{Y}$ induced by $g$ is denoted
\[\hat{g}:(y_0,y_1, y_2, \cdots ) \mapsto (g(y_0),g(y_1), g(y_2), \cdots )=(g(y_0),y_0, y_1, \cdots ).\]
We will use $d$ to define a metric $\hat{d}$ on $\hat{Y}$.

The following two theorems are the main results of this paper, and their proofs make up Sections \ref{thmApf} and \ref{thmBpf}, respectively.

\begin{theoremA}
If $(Y,d,g)$ satisfies Axioms 1 and 2 then $(\hat{Y}, \hat{d}, \hat{g})$ is a Smale space with totally disconnected local stable sets.  Moreover, $(\hat{Y}, \hat{d}, \hat{g})$ is an irreducible Smale space if and only if $(Y,d,g)$ is non-wandering and has a dense forward orbit.  
\end{theoremA}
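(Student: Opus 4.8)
The plan is to realize $\hat g$ as the shift homeomorphism on $\hat Y$ (its inverse being the one-sided left shift $(y_0,y_1,\dots)\mapsto(y_1,y_2,\dots)$), to equip $\hat Y$ with an explicit weighted metric built from $d$, and then to verify the three clauses of Definition \ref{Smale} directly, reading the stable contraction off the metric while extracting the unstable contraction from Axiom 1 and the bracket from Axiom 2. Concretely I would try $\hat d(x,y)=\sup_{n\ge 0}\lambda^{n}d(x_n,y_n)$ for a rate $\lambda$ to be matched to $\gamma$ and $K$; this induces the compact, metrizable inverse-limit topology. The candidate local stable set of $x=(x_n)$ is the part of the fibre $\pi_0^{-1}(x_0)$ near $x$, where $\pi_0(y_n)=y_0$, and the candidate local unstable set is a graph over a small $d$-ball in the $0$-th coordinate.

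For the stable clause, if $x_0=y_0$ then the leading coordinates of $\hat g^{n}x$ and $\hat g^{n}y$ agree and the weights give $\hat d(\hat g^{n}x,\hat g^{n}y)=\lambda^{n}\hat d(x,y)$ exactly, so Definition \ref{Smale}(1) holds on fibres; Axiom 1, in the forward-expansive form ``$d(g^{m}x_0,g^{m}y_0)\le\beta$ for all $m$ forces $g^{K}x_0=g^{K}y_0$'', is what confines a local stable set to such a fibre. For the unstable clause, Axiom 1 read along a backward orbit gives $d(x_{m+K},y_{m+K})\le\gamma^{K}d(x_m,y_m)$ while the coordinates stay within $\beta$, so the past coordinates decay geometrically and $\hat g^{-1}$ contracts unstable sets; since Axiom 1 only controls $K$-blocks I expect to prove the estimate for $\hat g^{-K}$ and then absorb the first $K$ steps into the constants. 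The bracket $[x,y]$ should be the sequence with $0$-th coordinate $x_0$ whose past runs parallel to that of $y$, and Axiom 2 (the covering/openness condition) is exactly what supplies the required backward orbit of preimages with radii contracted by $\gamma$, so the construction converges to a unique, continuously varying point. Total disconnectedness of the stable sets I would deduce from Axiom 1: pushing a nontrivial connected subset of a local stable set forward by $\hat g$ and projecting traps it inside a single point-preimage while its diameter shrinks, and the expansion forbids such a continuum.

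I expect the genuine obstacle to be this first half: choosing the metric and calibrating $\lambda$ against $\gamma$ and the block length $K$ so that both contraction estimates and the triangle inequality hold at once, and making the bracket construction and the total-disconnectedness argument rigorous when Axiom 1 is only a $K$-step, $\beta$-local substitute for genuine expansiveness.

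The ``moreover'' clause, by contrast, I would prove cleanly using the factor map $\pi_0\colon(\hat Y,\hat g)\to(Y,g)$, a continuous surjection with $\pi_0\circ\hat g=g\circ\pi_0$ and $\pi_0=g^{k}\circ\pi_k$. I will use that a compact Smale space is irreducible exactly when it is topologically transitive, i.e.\ for every pair of nonempty open $U,V$ there is $N>0$ with $\hat g^{N}(U)\cap V\neq\emptyset$. For the forward implication, taking $U=V$ shows $(\hat Y,\hat g)$ is non-wandering, and pushing an arbitrary return forward through $\pi_0$ shows every point of $Y=\pi_0(\hat Y)$ is non-wandering, while pushing the open-set transitivity through $\pi_0$ makes $(Y,g)$ transitive, whence a dense forward orbit by Baire category. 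For the converse I would first reduce: since $x_n=g^{N-n}(x_N)$, every basic open set of $\hat Y$ has the form $\pi_N^{-1}(W)$ with $W\subseteq Y$ open, and computing $\pi_N(\hat g^{L}x)$ shows that transitivity of $\hat g$ is equivalent to the statement that for all nonempty open $U,V\subseteq Y$ and all $M$ there is $j\ge M$ with $g^{j}(U)\cap V\neq\emptyset$. A dense forward orbit supplies transitivity (some $j\ge 0$), and non-wandering upgrades it to arbitrarily large $j$: chaining the non-wandering returns of a small set $U'\subseteq U$ produces self-transition times tending to infinity, which I compose with a fixed transition $U'\to V$. The one subtlety is that a dense forward orbit alone need not give unbounded transition times, which is precisely why the non-wandering hypothesis is needed.
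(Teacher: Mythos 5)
The ``moreover'' clause of your proposal is sound---indeed it amounts to a self-contained proof of the equivalence that the paper simply cites from Aoki--Hiraide---but the first half has a genuine gap, and it is exactly the one you flag and leave unresolved: the choice of metric. The metric you propose, $\hat d(\mathbf{x},\mathbf{y})=\sup_{n\ge 0}\lambda^{n}d(x_n,y_n)$, is precisely the auxiliary quantity the paper calls $d'$, and the paper warns (just before Lemma \ref{ulemma}) that the unstable-set analysis fails for $d'$; that failure is not a technicality. Under Axioms 1 and 2 the map $g$ need not be locally injective (see the paper's Example 3), so two points $\mathbf{y}\neq\mathbf{z}$ of one local unstable set can have $y_0=z_0$ but $y_1\neq z_1$ with $g(y_1)=g(z_1)$. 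If the supremum defining $d'(\mathbf{y},\mathbf{z})$ is attained at $n=1$, then $d'(\mathbf{y},\mathbf{z})=\gamma\, d(y_1,z_1)$, while $d'(\hat g^{-1}(\mathbf{y}),\hat g^{-1}(\mathbf{z}))\ge d(y_1,z_1)$, so $\hat g^{-1}$ \emph{expands} this pair by a factor of $\gamma^{-1}$, violating part (2) of Definition \ref{Smale}. The point is that Axiom 1 only gives the $K$-step decay $d(y_{m+K},z_{m+K})\le\gamma^{K}d(y_m,z_m)$ along $\beta$-close backward orbits; inside a $K$-block the coordinate distances can grow, and no calibration of $\lambda$ against $\gamma$ and $K$ rescues a sup-type metric.

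Your fallback---prove the estimate for $\hat g^{-K}$ and ``absorb the first $K$ steps into the constants''---is also not available, because Definition \ref{Smale} demands $d(f^{-n}(y),f^{-n}(z))\le\lambda^{n}d(y,z)$ for \emph{every} $n\ge 0$ with no multiplicative constant in front; a $K$-block estimate yields only a bound of the form $C\lambda^{n}$ with $C>1$, which is strictly weaker. The missing idea is an adapted metric: the paper sets $\hat d(\mathbf{x},\mathbf{y})=\sum_{k=0}^{K-1}\gamma^{-k}d'(\hat g^{-k}(\mathbf{x}),\hat g^{-k}(\mathbf{y}))$, a weighted sum over one $K$-block, which converts the $K$-step estimates of Axiom 1 into exact one-step contraction in both directions (Lemmas \ref{x0=y0} and \ref{ulemma}, Corollaries \ref{stableset} and \ref{expanding}); the price, which your outline would also need to absorb, is that local stable sets are then cut out by agreement of the first $K$ coordinates, not just the $0$-th. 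Two smaller points: your bracket construction via Axiom 2 is the right mechanism (it matches the paper's Lemma \ref{singleton}), but uniqueness of the bracket point comes from Axiom 1, not from the convergence of the construction; and for total disconnectedness the clean route is the paper's Lemma \ref{finite} (Axiom 1 forces $g$ to be finite-to-one), which makes every projection $\pi_n$ of a local stable set a finite set and hands you separating clopen sets---your continuum-shrinking sketch would in any case have to pass through exactly that finiteness statement.
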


\begin{theoremB}
Let $(X,d,f)$ be an irreducible Smale space with totally disconnected local stable sets.  Then $(X,d,f)$ is topologically conjugate to an inverse limit space $(\hat{Y}, \hat{\delta}, \hat{\alpha})$ such that $(Y, \delta, \alpha)$ satisfies Axioms 1 and 2.  
\end{theoremB}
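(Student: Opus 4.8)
The plan is to realize $X$ itself as the inverse limit by collapsing its (totally disconnected) local stable sets. First I would build the base space $(Y,\delta)$ together with the map $\alpha$. Define an equivalence relation $\sim$ on $X$ generated by local stable equivalence: declare $x$ and $y$ equivalent when $y\in X^s(x,\epsilon)$ for a small fixed $\epsilon\le\epsilon_X'$, and pass to the equivalence relation this generates. Set $Y=X/{\sim}$ with quotient map $\pi\colon X\to Y$. The first technical point is to show that, thanks to the local product structure (the bracket) together with the hypothesis that local stable sets are totally disconnected, this relation is closed with compact fibres, so that $Y$ is a compact metrizable space; I would obtain the metric $\delta$ by transporting $d$ along local unstable sets, i.e. measuring distance in $Y$ by the $d$-distance between unstable representatives, using the homeomorphism $[\cdot,\cdot]\colon X^u(x,\epsilon)\times X^s(x,\epsilon)\to X$ to make this well defined on small scales and then patching. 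Because $f$ contracts local stable sets, $f$ descends to a continuous surjection $\alpha\colon Y\to Y$ characterised by $\alpha\circ\pi=\pi\circ f$.

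Next I would construct the conjugacy. Define $h\colon X\to\hat Y$ by $h(x)=(\pi(x),\pi(f^{-1}(x)),\pi(f^{-2}(x)),\dots)$. The identity $\alpha\circ\pi=\pi\circ f$ shows that $h$ lands in $\hat Y$ and that $h\circ f=\hat\alpha\circ h$, so $h$ is a continuous equivariant map. Injectivity is where expansiveness enters: if $\pi(f^{-n}(x))=\pi(f^{-n}(y))$ for all $n\ge 0$, then $f^{-n}(y)$ is stably close to $f^{-n}(x)$ for every $n$, which forces $d(f^{m}(x),f^{m}(y))$ to stay uniformly small for all $m\in\mathbb{Z}$; since Smale spaces are expansive (a standard consequence of the axioms), this gives $x=y$. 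Surjectivity is a threading argument: given a point of $\hat Y$ I would lift its coordinates through the compact fibres of $\pi$ and use compactness to extract a coherent preimage. As $X$ is compact and $\hat Y$ is Hausdorff, the continuous equivariant bijection $h$ is a topological conjugacy.

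It then remains to verify that $(Y,\delta,\alpha)$ satisfies Axioms 1 and 2, which is the heart of the matter. For Axiom 1, I would use that, having quotiented out the stable direction, $\delta$ measures distance purely along unstable sets, on which $f$ (hence $\alpha$) expands at the uniform rate governed by $\lambda$; the subtlety is that $\alpha$ is not injective, so a single forward step need not expand, and one must iterate far enough to separate the branches. This is exactly why the axioms compare $\alpha^{K}$ with $\alpha^{2K}$: choosing $K$ large enough that $\alpha^{K}$ is injective on $\delta$-balls of radius $\beta$ converts the raw unstable expansion of $f$ into the stated inequality. For Axiom 2, I would use that $f$ is a homeomorphism and that $\pi$ is, locally, projection off a totally disconnected factor, so that $\alpha$ behaves like a branched covering; pushing a ball forward by $\alpha^{K}$ and pulling it back through the (now injective) branch realises it inside $\alpha^{2K}(B(x,\gamma\epsilon))$.

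The main obstacle I anticipate is the simultaneous control needed in this last step: one must choose $\delta$ and the constants $K,\beta,\gamma$ so that the expansion estimate (Axiom 1) and the openness estimate (Axiom 2) hold at once, and both require a quantitative understanding of how the non-injective map $\alpha$ distorts $\delta$-balls in terms of the Smale-space data $\lambda,\epsilon_X,\epsilon_X'$ and the local product structure. Equally delicate is the very construction of $Y$: verifying that collapsing local stable sets yields a Hausdorff, compact, metrizable space — rather than something with dense, non-closed classes — is precisely where total disconnectedness of the stable sets is essential, and arranging the metric $\delta$ to be genuinely compatible with both the quotient topology and the two axioms is the crux of the argument.
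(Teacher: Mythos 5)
Your overall architecture — collapse the stable direction, take the stationary inverse limit, conjugate via $x\mapsto(\pi(x),\pi(f^{-1}(x)),\dots)$ — agrees with the paper's, and your map $h$ is exactly the paper's $\omega$. But there are two genuine gaps, and they are precisely the places where the paper's real ideas live. First, you quotient by the transitive closure of the bare relation ``$y\in X^s(x,\epsilon)$''. Nothing in your proposal controls that transitive closure: chains $x\thickapprox x_1\thickapprox\cdots\thickapprox x_n\thickapprox y$ of $\epsilon$-stable jumps can a priori be arbitrarily long and drift arbitrarily far along the global stable set $X^s(x)$, which is dense in an irreducible Smale space; so closedness of the classes, the Hausdorff property of the quotient, and even the injectivity of your map $h$ (which needs every class to be \emph{uniformly} stably small, not just contained in a global stable set) are all unsupported. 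You flag this as ``delicate'' but supply no mechanism. The paper's mechanism is the heart of its proof: it first builds a special Markov partition whose rectangles are clopen in the stable direction (Proposition \ref{clopenMP}, proved via Putnam's $s$-resolving factor maps), defines $x\thickapprox y$ only when $x,y$ lie in a \emph{common rectangle} and $x\in X^s(y,\epsilon_X)$, and then proves a uniform bound on chain length (Lemma \ref{N}) and a uniform time $K$ after which every class contracts into an $\epsilon_0'$-stable set (Corollary \ref{K}). Relatedly, your metric (``transport $d$ along unstable representatives'') is essentially the paper's $d^u$, which the paper explicitly notes fails the triangle inequality; the fix — a path metric whose lengths are computed on classes \emph{enlarged} near the partition boundary via $\langle\cdot,\cdot\rangle$ — is another idea your outline does not contain, and it is needed exactly because the classes jump in size (lower semicontinuously) from rectangle interiors to boundaries, where any naive unstable-transport distance degenerates.

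Second, your proposed mechanism for Axioms 1 and 2 is false. You want to choose $K$ so that ``$\alpha^K$ is injective on $\delta$-balls of radius $\beta$'' and, for Axiom 2, to pull a ball back ``through the (now injective) branch.'' The maps arising from this construction are in general not locally injective at \emph{any} iterate: in the paper's Example 3 (the Sierpinski gasket space, which is exactly the kind of quotient Theorem B produces), $g$ is not locally injective and $g^k$ is not open for every $k\geq 1$ — the folding at the fixed points $A$, $B$, $C$ persists under all iterates. This is precisely why Axioms 1 and 2 are phrased as comparisons between $g^K$ and $g^{2K}$ rather than as local expansion or openness of some iterate. The paper's actual verification of the axioms never invokes injectivity of $\alpha^K$; it runs through Lemma \ref{subset} (after $K$ steps the enlarged classes $\lceil x\rceil$ collapse into genuine classes $[[f^K(x)]]$), the estimate \eqref{6.27} relating $d^u$ to $\delta$, and the contraction rate $\lambda$ along unstable sets. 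Without the Markov partition, the chain-length bound, and the boundary-enlargement device, neither the construction of $(Y,\delta,\alpha)$ nor the verification of the axioms goes through as you have sketched it.
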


The space $Y$ in Theorem B is a quotient of $X$, and the proof of the theorem is constructive.  If apply the construction to an irreducible shift of finite type, the quotient is simply the one-sided shift and the inverse limit recovers the original two-sided shift.  

It is clear that Axioms 1 and 2 are central and subtle.  The following three examples are intended to give some context for these conditions.  

\subsection*{Example 1:  The role of the constant $K\geq 1$}

The following well-known example satisfies R.F. Williams' conditions for a 1-solenoid \cite{Williams67}, as well as our Axioms 1 and 2.  

Let $Y$ be a wedge of two circles, $a$ and $b$, joined at a single point $v$.
Let both circles have circumference 1.  

Divide $a$ into thirds and $b$ into halves.  
Let $g:Y\rightarrow Y$ map each of the first two intervals of $a$ onto $a$ and the third interval onto $b$; the first interval of $b$ onto $a$ and the second onto $b$ (see Figure \ref{g}).

\begin{figure}[h!]
\begin{center}
\begin{pspicture}(-2,-2.3)(2,2)
\pscircle(0,0){2}
\pscircle(0,-.5){1.5}
\psdot(0,-2)
\psdot(0,1)
\psdot(-1.732,1)
\psdot(1.732,1)
\rput(0,2.3){$a$}
\rput(-2.2,-.5){$a$}
\rput(2.2,-.5){$b$}
\rput(-1.2,-.5){$a$}
\rput(1.2,-.5){$b$}
\rput(0,-2.2){$v$}
\end{pspicture}
\end{center}
\caption{$a\mapsto aab$, $b\mapsto ab$}
\label{g}
\end{figure}
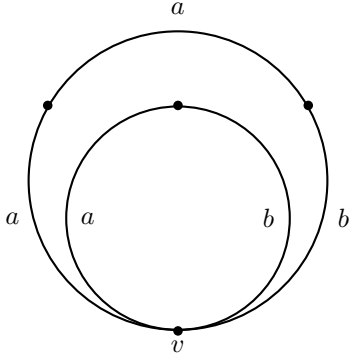

This example does not satisfy the locally expanding version of Axiom 1:  for any $\beta >0$, we can find $x\in a\setminus \lbrace v \rbrace$ and $y\in b\setminus \lbrace v \rbrace$ such that $d(x,y)\leq \beta$ and $g(a)=g(b)$.  So we cannot have $d(x, y)\leq\gamma^K d(g^{K}(x), g^{K}(y))$ for any $K\geq 1$ and $0<\gamma <1$.  

What is happening here is that on a local level, the first iteration of $g$ flattens, and the second iteration expands.  We remark that Williams' conditions allow for both flattening and expansion because it is the derivative of the map that is expanding.

\subsection*{Example 2:  A surjection failing Axiom 2}

 Let $\Sigma^+_{\lbrace 0,1\rbrace}$ and $\Sigma^+_{\lbrace 0,2\rbrace}$ be the full one-sided shifts on the symbol sets $\lbrace 0,1 \rbrace$ and $\lbrace 0,2 \rbrace$, respectively.  We'll use the following common metric on these one-sided shifts:  $d(\mathbf{x}, \mathbf{y})=2^{-\min\lbrace n \; | \; x_n \neq y_n \rbrace}$.  

Let $Y= \Sigma^+_{\lbrace 0,1\rbrace} \bigcup \Sigma^+_{\lbrace 0,2\rbrace}$, and $g$ be the usual left shift map.  Then $g$ is clearly a continuous and surjective map on $Y$.  

Let us show that Axiom 2 fails for $(Y,d,g)$.  Choose $K\geq 1$, $N\geq 2K$ and $0< \gamma < 1$.  Consider the points $\mathbf{x}, \mathbf{y} \in Y$  given by  
\[ x_n = \left\lbrace 
\begin{array}{ll}
1 & \mathrm{if} \; n=N+K \\
0 & \mathrm{if} \; n\neq N+K
\end{array}
 \right. \]
and
\[ y_n = \left\lbrace 
\begin{array}{ll}
2 & \mathrm{if} \; n=N \\
0 & \mathrm{if} \; n\neq N
\end{array}
 \right. . \]
Then $d(g^K(\mathbf{x}), \mathbf{y})=2^{-N}$.  However, $g^K(\mathbf{y})\notin g^{2K}(B(\mathbf{x}, \gamma 2^{-N}))$ since for any point $\mathbf{z}\in B(\mathbf{x}, \gamma 2^{-N})$ we have $g^{2K}(\mathbf{z})_{N-2K}=z_N=x_N=0$.  

It is easy to see that the inverse limit $(\hat{Y}, \hat{g})$ is conjugate to $(\Sigma_{\lbrace 0,1\rbrace} \bigcup \Sigma_{\lbrace 0,2\rbrace}, S)$, where $\Sigma_{\lbrace 0,1\rbrace}$ and $\Sigma_{\lbrace 0,2\rbrace}$ are the full two-sided shifts on their respective symbol sets, and $S$ is the left shift map.  However this system is not a Smale space: we can find distinct points $\mathbf{x}\in \Sigma_{\lbrace 0,1\rbrace}$  and $\mathbf{y}\in \Sigma_{\lbrace 0,2\rbrace}$ that are arbitrarily close, yet the intersection of the local stable set of $\mathbf{x}$ (contained entirely in $\Sigma_{\lbrace 0,1\rbrace}$) and the local unstable set of $\mathbf{y}$ (contained entirely in $\Sigma_{\lbrace 0,2\rbrace}$) is empty.  

\subsection*{Example 3: A space satisfying Axioms 1 and 2}

Consider the following example.  Take six copies, $Y_1, Y_2, \cdots , Y_6$, of the Sierpinski gasket with distinguished vertices, as in Figure \ref{six}.  (For a description of the Sierpinski gasket, see \cite{Falconer}).  We note that the picture omits the intricate structure in the interior of the triangles.  Moreover, the short extensions added to the vertices ought not to be considered as points in the space, but merely as labels which code the same information as $A$, $B$, and $C$.  

Let $\sim$ be the equivalence relation on $Y_1 \cup Y_2 \cup \cdots \cup Y_6$ identifying the six vertices labeled $A$, the six vertices labeled $B$, and the six vertices labeled $C$.  That is, the only equivalence classes containing more than one point are $A$, $B$, and $C$.  We define
\[Y= \left(  Y_1 \cup Y_2 \cup \cdots \cup Y_6 \right) /_\sim.\]

\begin{figure}[h!]
\psset{xunit=.7cm,yunit=.7cm}
\begin{pspicture}(0,-0.2)(15,10)
\includegraphics[scale=.7]{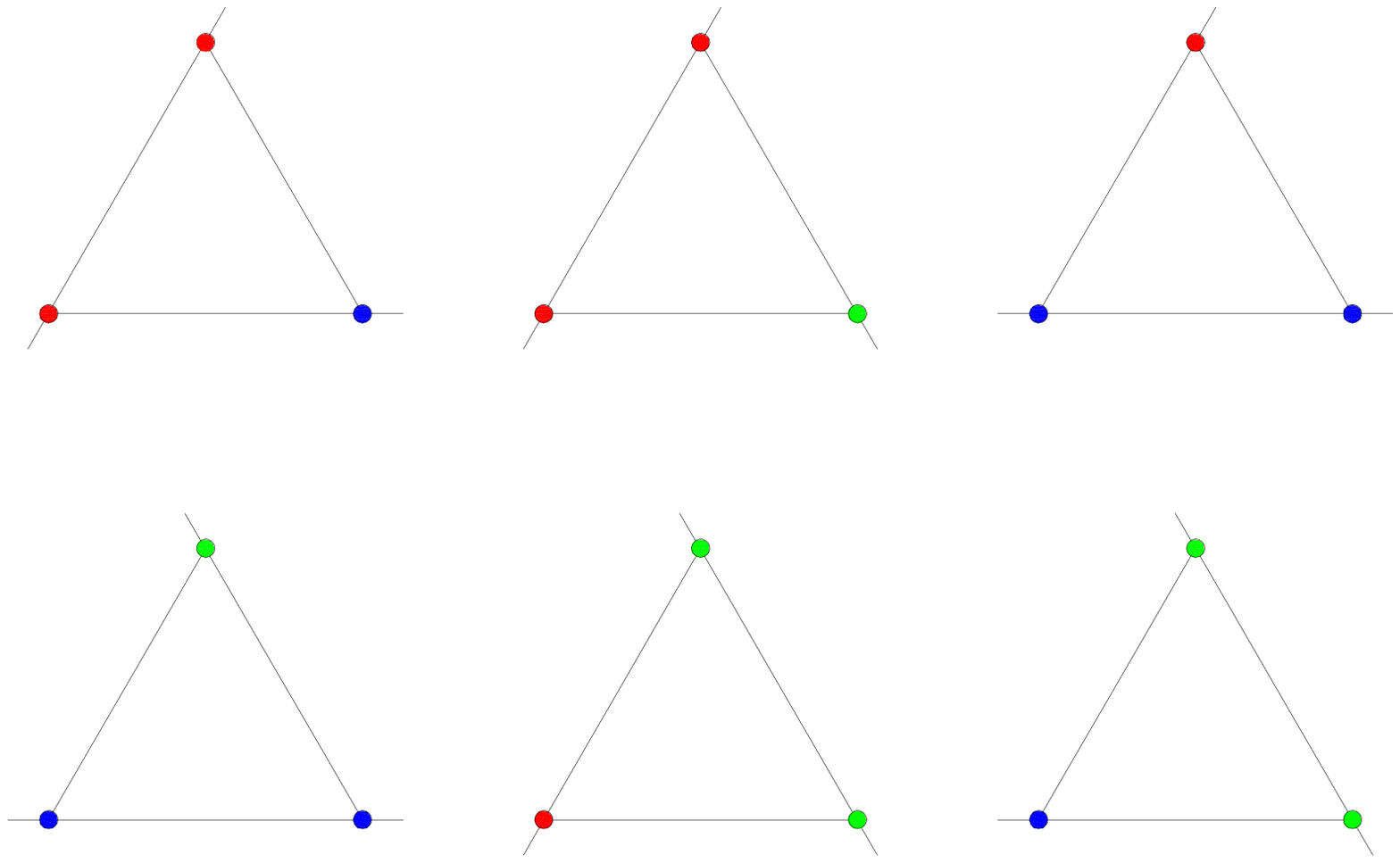}
\rput(-15.6,9.5){$A$}
\rput(-10,9.5){$A$}
\rput(-4.4,9.5){$A$}
\rput(-17.4,6.6){$A$}
\rput(-13.3,6){$B$}
\rput(-11.7,6.6){$A$}
\rput(-7.3,6.6){$C$}
\rput(-5.8,6){$B$}
\rput(-1.9,6){$B$}
\rput(-15.6,3.7){$C$}
\rput(-17.2,0.2){$B$}
\rput(-13.3,0.2){$B$}
\rput(-10,3.7){$C$}
\rput(-11.7,0.5){$A$}
\rput(-7.3,.5){$C$}
\rput(-4.4,3.7){$C$}
\rput(-5.8,.2){$B$}
\rput(-1.6,.5){$C$}
\rput(-15,5.7){$Y_1$}
\rput(-9.5,5.7){$Y_2$}
\rput(-3.7,5.7){$Y_3$}
\rput(-15,-0.1){$Y_4$}
\rput(-9.5,-0.1){$Y_5$}
\rput(-3.7,-0.1){$Y_6$}
\end{pspicture}
\caption{Three distinguished vertices}
\label{six}
\end{figure}

We use the standard ``shortest path" metric on $Y$.  See Figure \ref{A} for an example of a neighborhood of $A$.

\begin{figure}[h!]
\begin{center}
\includegraphics[scale=.5]{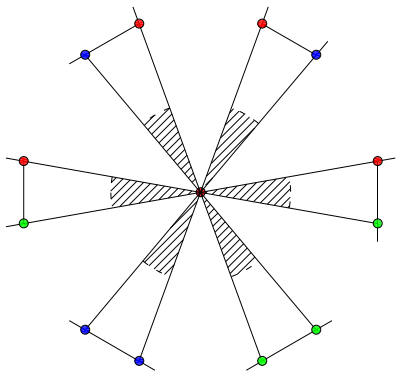}
\end{center}
\caption{A neighborhood of $A$}
\label{A}
\end{figure}

It is clear that $(Y,d)$ is compact.

We define a mapping $g:Y\rightarrow Y$ as follows: $g$ fixes $A$, $B$, and $C$.  In each triangle $Y_i$, the midpoint of the left edge is mapped to $A$, the midpoint of the bottom edge is mapped to $B$, and the midpoint of the right edge is mapped to $C$ (hence the labels as small line segments).  On each of the remaining three small gaskets which make up $Y_i$, $g$ scales by a factor of 2 and maps them onto the unique gasket with the vertices as specified by the images of the three corners.  
As an example, the image of $Y_1$ is shown in Figure \ref{gY1}.  

\begin{figure}[h!]
\psset{xunit=.8cm,yunit=.8cm}
\begin{pspicture}(-.5,0)(5,7)
\includegraphics[scale=.6]{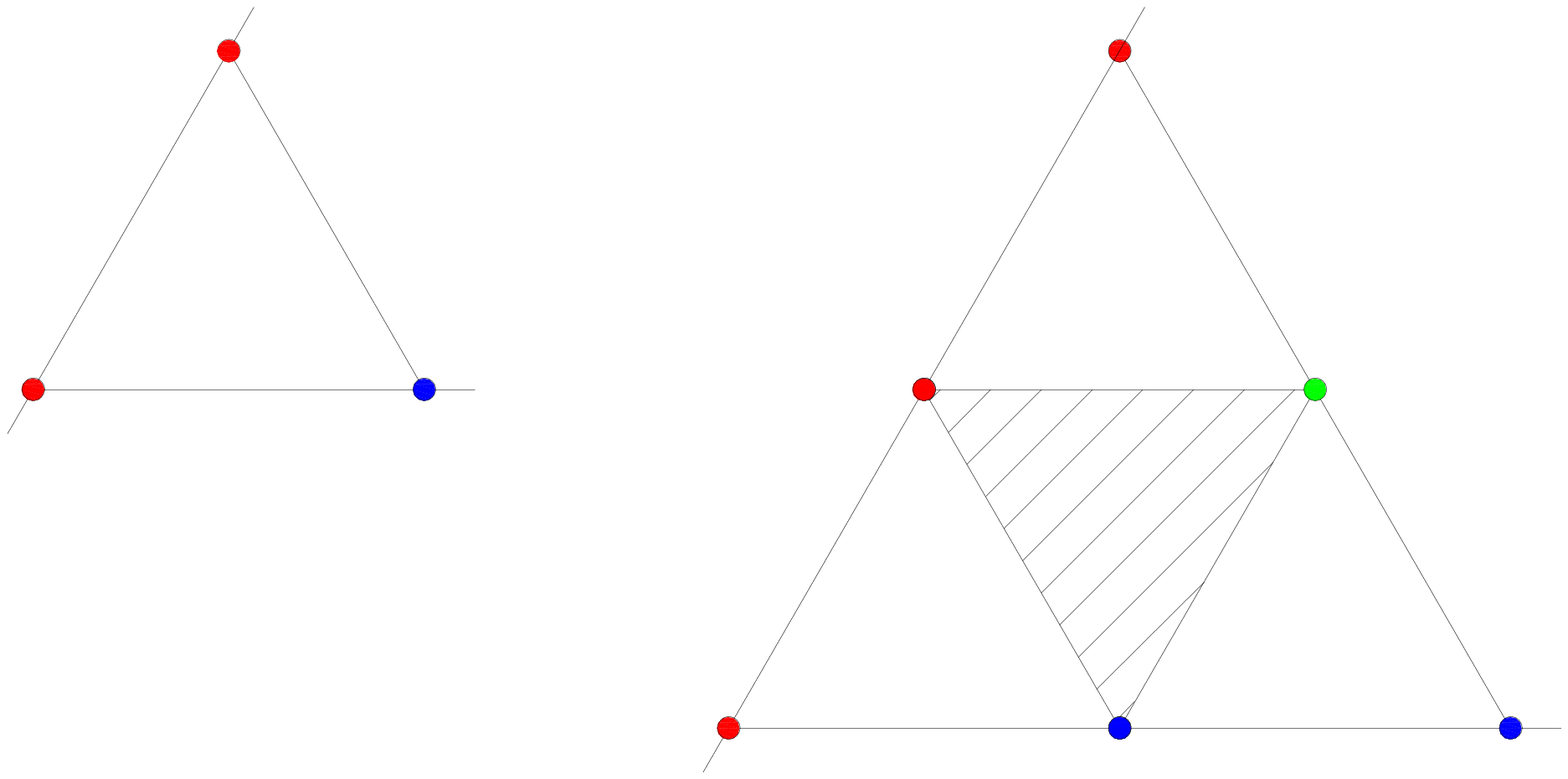}
\rput(-12.3,5){$Y_1$}
\rput(-14.4, 4){$A$}
\rput(-12.6, 7){$A$}
\rput(-10.2, 3.3){$B$}
\rput(-4,5){$Y_2$}
\rput(-5.9,2){$Y_1$}
\rput(-2.2,2){$Y_4$}
\rput(-4.5, 7){$A$}
\rput(-7.9, 1){$A$}
\rput(-0.3, 0.2){$B$}
\rput(-4, 0.2){$B$}
\rput(-6.4, 3.6){$A$}
\rput(-1.9, 3.6){$C$}
\psline[arrowscale=2]{->}(-8.5,4.5)(-7.5,4.5)
\rput(-8,4.8){$g$}
\end{pspicture}
\caption{$g(Y_1)$}
\label{gY1}
\end{figure}

The relation $\sim$ ensures that $g$ is well-defined on $Y$.  This mapping is clearly continuous on $Y$. Observe, however, that the map $g$ is not locally injective, and hence not locally expanding either. Moreover, for every $k\geq 1$, the map $g^k$ is not an open map.  

It is not hard to see that $(Y,d,g)$ satisfies Axiom 1, since $g$ is essentially a scale-and-subdivide mapping.  The subtlety of this example lies in what happens at the vertices.  The image of a small enough neighborhood of any vertex, as shown in Figure \ref{A}, intersects only two of the gaskets, and it is this notion of flattening that gives us Axiom 2.

\section{Proof of Theorem A}\label{thmApf}

Suppose that $(Y,d,g)$, together with the constants $\beta>0$, $K\geq 1$, and $0<\gamma < 1$, satisfies Axioms 1 and 2.

We define a metric $\hat{d}$ on $\hat{Y}$ by 
\[\hat{d}(\mathbf{x}, \mathbf{y})= \sum_{k=0}^{K-1}\gamma^{-k}d'(\hat{g}^{-k}(\mathbf{x}),\hat{g}^{-k}(\mathbf{y})), \]
where $d'(\mathbf{x}, \mathbf{y})=\mathrm{sup}_{n\geq 0}\lbrace \gamma^n d(x_n,y_n)\rbrace$.  It is not hard to show that $\hat{d}$ gives the product topology on $\hat{Y}$, so that $(\hat{Y}, \hat{d})$ is compact.  Moreover, it is clear that the map $\hat{g}$ is a homeomorphism on $(\hat{Y}, \hat{d})$.  

Our first task will be to obtain more useful descriptions of the sets $\hat{Y}^s(\mathbf{x}, \epsilon)$ and $\hat{Y}^u(\mathbf{x}, \epsilon)$.

We make the following easy observation about $d'$.  

\begin{obs}\label{d'}
If $x_0=y_0$ then $d'(\hat{g}(\mathbf{x}), \hat{g}(\mathbf{y}))=\gamma d'(\mathbf{x}, \mathbf{y})$.  
\end{obs}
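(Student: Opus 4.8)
The plan is to compute $d'(\hat{g}(\mathbf{x}), \hat{g}(\mathbf{y}))$ directly from its definition as a supremum, exploiting the explicit coordinate-shifting form of $\hat{g}$. First I would record that, by the formula $\hat{g}(\mathbf{x}) = (g(x_0), x_0, x_1, \ldots)$, the $n$-th coordinate of $\hat{g}(\mathbf{x})$ is $g(x_0)$ when $n = 0$ and is $x_{n-1}$ when $n \geq 1$, and similarly for $\mathbf{y}$. Substituting these into
\[
d'(\hat{g}(\mathbf{x}), \hat{g}(\mathbf{y})) = \sup_{n \geq 0}\bigl\{ \gamma^n d\bigl(\hat{g}(\mathbf{x})_n, \hat{g}(\mathbf{y})_n\bigr)\bigr\}
\]
splits the supremum into the single term at $n = 0$, namely $d(g(x_0), g(y_0))$, together with the terms at $n \geq 1$.

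The hypothesis $x_0 = y_0$ enters exactly once and is the only place it is needed: it forces $g(x_0) = g(y_0)$, so the $n = 0$ term equals $0$ and contributes nothing to the supremum. For the remaining terms I would reindex by $m = n - 1$, which converts $\gamma^n d(x_{n-1}, y_{n-1})$ into $\gamma \cdot \gamma^m d(x_m, y_m)$; pulling the common factor $\gamma$ out of the supremum over $m \geq 0$ leaves precisely $\gamma\, d'(\mathbf{x}, \mathbf{y})$. Since $\gamma > 0$ and $d'$ is nonnegative, the vanished $n = 0$ term cannot exceed this value, so the overall supremum is exactly $\gamma\, d'(\mathbf{x}, \mathbf{y})$.

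I do not expect any genuine obstacle here; the content is entirely bookkeeping about how $\hat{g}$ relabels coordinates, and the one point meriting care is confirming that the inserted coordinate at index $0$ truly drops out under the hypothesis, so that no surviving boundary term spoils the clean factor of $\gamma$.
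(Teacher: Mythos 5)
Your proof is correct: splitting the supremum into the $n=0$ term (which vanishes since $x_0=y_0$ forces $g(x_0)=g(y_0)$) and the reindexed $n\geq 1$ terms (which give exactly $\gamma\, d'(\mathbf{x},\mathbf{y})$) is precisely the intended computation. The paper states this as an ``easy observation'' and omits any proof, so your argument simply supplies the bookkeeping the author left to the reader.
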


Choose 
$$0<\epsilon_{\hat{Y}}' \leq \frac{1}{2}\beta$$ 
such that $\hat{d}(\mathbf{x}, \mathbf{y})\leq \epsilon_{\hat{Y}}'$ implies $\hat{d}(\hat{g}^{-n}(\mathbf{x}), \hat{g}^{-n}(\mathbf{y}))\leq \beta$ for $n=0, \cdots, 2K-1$. 

\begin{lemma}\label{x0=y0}  
For any $0<\epsilon \leq \epsilon_{\hat{Y}}'$, $\mathbf{y}\in \hat{Y}^s(\mathbf{z}, \epsilon)$ if and only if $y_m=z_m$ for $m=0, \cdots, K-1$ and $\hat{d}(\mathbf{y}, \mathbf{z})\leq\epsilon$.
\end{lemma}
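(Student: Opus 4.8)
The plan is to prove the two implications separately. The backward implication is a direct computation from Observation~\ref{d'}, while the forward implication is the substantial one and is where Axiom~1 is used. Throughout I would write $D_n=\hat{d}(\hat{g}^n(\mathbf{z}),\hat{g}^n(\mathbf{y}))$ and extend the coordinates of a point of $\hat{Y}$ to negative indices by $z_{-j}:=g^j(z_0)$; with this convention $(\hat{g}^n(\mathbf{z}))_p=z_{p-n}$ for all $p\ge 0$ and $n\ge 0$, and $z_i=g(z_{i+1})$ for every $i\in\mathbb{Z}$, so that applying $g^m$ simply decreases an index by $m$.

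For the backward implication, suppose $y_m=z_m$ for $m=0,\dots,K-1$ and $\hat{d}(\mathbf{y},\mathbf{z})\le\epsilon$. Since $z_0=y_0$, all forward images agree, so $z_i=y_i$ for every $i\le K-1$; consequently the first $K$ coordinates of $\hat{g}^n(\mathbf{z})$ and $\hat{g}^n(\mathbf{y})$ coincide for every $n\ge 0$, because $(\hat{g}^n(\mathbf{z}))_k=z_{k-n}$ with $k-n\le K-1$. Applying Observation~\ref{d'} to each of the $K$ summands defining $\hat{d}(\hat{g}^{n+1}(\mathbf{z}),\hat{g}^{n+1}(\mathbf{y}))$ then yields $D_{n+1}=\gamma D_n$, and hence $D_n=\gamma^n D_0\le D_0=\hat{d}(\mathbf{z},\mathbf{y})\le\epsilon$ for all $n\ge 0$. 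Thus $\mathbf{y}\in\hat{Y}^s(\mathbf{z},\epsilon)$.

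For the forward implication, assume $\mathbf{y}\in\hat{Y}^s(\mathbf{z},\epsilon)$. The inequality $\hat{d}(\mathbf{y},\mathbf{z})=D_0\le\epsilon$ is the case $n=0$. I would then extract two bounds: taking the $k=0$ summand of $D_n\le\epsilon$ and its $p=0$ coordinate gives $d(g^n(z_0),g^n(y_0))\le\epsilon$ for all $n\ge 0$, i.e.\ $d(z_i,y_i)\le\epsilon$ for every $i\le 0$; and since $\hat{d}(\mathbf{z},\mathbf{y})\le\epsilon\le\epsilon_{\hat{Y}}'$, the defining property of $\epsilon_{\hat{Y}}'$ together with the same coordinate extraction gives $d(z_n,y_n)\le\beta$ for $n=0,\dots,2K-1$. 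Now suppose, for a contradiction, that $z_M\ne y_M$ for some $0\le M\le K-1$, and set $\delta=d(z_M,y_M)>0$. Applying Axiom~1 at the coordinate $z_{M-(j-2)K}$ (so that $g^K$ and $g^{2K}$ send it to $z_{M-(j-1)K}$ and $z_{M-jK}$) gives, for each $j\ge 1$,
\[ d(z_{M-(j-1)K},y_{M-(j-1)K})\le\gamma^K d(z_{M-jK},y_{M-jK}), \]
provided the hypothesis $d(z_{M-(j-2)K},y_{M-(j-2)K})\le\beta$ of Axiom~1 holds. Iterating from $d(z_M,y_M)=\delta$ yields $d(z_{M-jK},y_{M-jK})\ge\gamma^{-jK}\delta$; but $M-jK<0$ for $j\ge 1$, so the left-hand side is a forward-image distance and is $\le\epsilon$, forcing $\delta\le\gamma^{jK}\epsilon$ for all $j$ and hence $\delta=0$, a contradiction. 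Therefore $z_m=y_m$ for $m=0,\dots,K-1$.

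The main obstacle is getting the direction of Axiom~1 right and the bookkeeping in this last iteration. Axiom~1 must be read as an \emph{expansion}: it increases coordinate distances as the index decreases (as one applies $g$), which is why a single disagreement $\delta>0$ at index $M$ is magnified by $\gamma^{-K}$ at each of the negative indices $M-jK$, where it collides with the contractive forward-image bound $\le\epsilon$. One must also confirm the Axiom~1 hypothesis at every stage: the relevant index $M-(j-2)K$ equals $M+K$ or $M$ for $j=1,2$, both lying in $\{0,\dots,2K-1\}$ and so covered by the $\beta$-bound, while for $j\ge 3$ it is negative, so the point is a forward image controlled by $\epsilon\le\beta$. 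Once these threshold estimates are confirmed, everything else reduces to the routine verifications recorded above.
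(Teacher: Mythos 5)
Your proposal is correct and takes essentially the same approach as the paper: the backward implication iterates Observation \ref{d'} across the $K$ summands of $\hat{d}$, and the forward implication iterates Axiom 1 along forward $g$-images (your negative indices), with the stable-set condition and the defining property of $\epsilon_{\hat{Y}}'$ supplying the $\beta$-hypotheses at each stage. The only differences are cosmetic: you argue by contradiction where the paper directly bounds $d(y_m,z_m)\leq\gamma^{sK}\beta$ for all $s$, and your chain at indices $M+K, M, M-K, M-2K,\dots$ is exactly the paper's chain at the coordinates $y_{K+m}$ written in shifted notation.
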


\begin{proof}
First, suppose that $\mathbf{y}\in \hat{Y}^s(\mathbf{z}, \epsilon)$.  By our choice of $\epsilon_{\hat{Y}}'$, we have 
$$\hat{g}^{-(2K-1)}(\mathbf{y})\in \hat{Y}^s(\hat{g}^{-(2K-1)}\mathbf{z}, \beta).$$  
So for each $m=0, \cdots, K-1$ and any $n\geq 0$, we have 
\begin{align*}
d(g^n(y_{K+m}), g^n(z_{K+m}) & = d(g^{n}(\hat{g}^{-(K+m)}(\mathbf{y})_0), g^{n}(\hat{g}^{-(K+m)}(\mathbf{z})_0)) \\
&=d(\hat{g}^{n-(K+m)}(\mathbf{y})_0, \hat{g}^{n-(K+m)}(\mathbf{z})_0)\\
&\leq d'(\hat{g}^{n-(K+m)}(\mathbf{y}), \hat{g}^{n-(K+m)}(\mathbf{z}))\\
&\leq \hat{d}(\hat{g}^{n-(K+m)}(\mathbf{y}), \hat{g}^{n-(K+m)}(\mathbf{z}))\\
& \leq \beta. 
\end{align*}
Applying Axiom 1, we get 
$$d(g^{K+n}(y_{K+m}), g^{K+n}(z_{K+m}))\leq \gamma^K d(g^{2K+n}(y_{K+m}), g^{2K+n}(z_{K+m}))$$
for all $n\geq 0$.  
That is, 
\begin{align*}
d(y_m,z_m) & =d(g^{K}(y_{K+m}), g^{K}(z_{K+m})\\
& \leq\gamma^{sK} d(g^{(s+1)K}(y_{K+m}), g^{(s+1)K}(z_{K+m}))\\
&\leq\gamma^{sK}\beta
\end{align*}
for all $s\geq 1$, so that $y_m=z_m$.  

For the converse, suppose $y_m=z_m$ for $m=0, \cdots, K-1$ and $\hat{d}(\mathbf{y}, \mathbf{z})\leq\epsilon$.  
For each $m=0, \cdots, K-1$ we have $\hat{g}^{-m}(\mathbf{y})_0=y_m=z_m=\hat{g}^{-m}(\mathbf{z})_0$.  It follows by Observation \ref{d'} that 
$$d'(\hat{g}^{-m+1}(\mathbf{y}), \hat{g}^{-m+1}(\mathbf{z})) =  \gamma d'(\hat{g}^{-m}(\mathbf{y}), \hat{g}^{-m}(\mathbf{z})),$$
and hence 
\[ \hat{d}(\hat{g}(\mathbf{y}), \hat{g}(\mathbf{z})) =  \gamma \hat{d}(\mathbf{y}, \mathbf{z}).\]

That is, $y_m=z_m$ for $m=0, \cdots, K-1$ implies $\hat{d}(\hat{g}(\mathbf{y}), \hat{g}(\mathbf{z})) =  \gamma \hat{d}(\mathbf{y}, \mathbf{z})$.  Let us apply this result to $\hat{g}^n(\mathbf{y})$ and $\hat{g}^n(\mathbf{z})$, where $n\geq 0$.
We have 
$$\hat{g}^n(\mathbf{y})_m=g^n(y_m)=g^n(z_m)=\hat{g}^n(\mathbf{z})_m$$ 
for $m=0, \cdots, K-1$, hence
$$\hat{d}(\hat{g}^{n+1}(\mathbf{y}), \hat{g}^{n+1}(\mathbf{z})) =  \gamma \hat{d}(\hat{g}^n(\mathbf{y}), \hat{g}^n(\mathbf{z})).$$
It follows that 
$$\hat{d}(\hat{g}^n(\mathbf{y}), \hat{g}^n(\mathbf{z})) =  \gamma^n \hat{d}(\mathbf{y}, \mathbf{z})\leq \epsilon$$
for all  $n\geq 0$.  
\end{proof} \medbreak

The following property follows easily from the proof of Lemma \ref{x0=y0}.  This is part (1) of Definition \ref{Smale}.

\begin{cor}\label{stableset}
If $\mathbf{y}, \mathbf{z}\in \hat{Y}^s(\mathbf{x}, \epsilon_{\hat{Y}}')$, then 
$\hat{d}(\hat{g}(\mathbf{y}), \hat{g}(\mathbf{z}))\leq \gamma\hat{d}(\mathbf{y}, \mathbf{z}).$
\end{cor}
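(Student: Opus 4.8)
The plan is to deduce this almost immediately from the intermediate computation carried out in the converse half of Lemma \ref{x0=y0}, by observing that the hypothesis needed there is automatic once $\mathbf{y}$ and $\mathbf{z}$ share a local stable set. The only real content is recognizing that lying in a common stable set forces agreement of the first $K$ coordinates.

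First I would apply Lemma \ref{x0=y0} separately to each of $\mathbf{y}$ and $\mathbf{z}$, taking $\epsilon = \epsilon_{\hat{Y}}'$. Since $\mathbf{y}\in\hat{Y}^s(\mathbf{x},\epsilon_{\hat{Y}}')$, the lemma gives $y_m = x_m$ for $m = 0,\dots,K-1$; likewise $z_m = x_m$ for those same indices. Comparing, I conclude $y_m = z_m$ for $m = 0,\dots,K-1$, which is exactly the hypothesis that triggered the telescoping argument in the proof of the lemma.

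Next I would invoke that telescoping argument directly. In the converse direction of Lemma \ref{x0=y0} it was shown, using Observation \ref{d'} to produce the factor $\gamma$ at each of the $K$ terms in the sum defining $\hat{d}$, that $y_m = z_m$ for $m = 0,\dots,K-1$ implies
\[ \hat{d}(\hat{g}(\mathbf{y}), \hat{g}(\mathbf{z})) = \gamma\,\hat{d}(\mathbf{y}, \mathbf{z}). \]
That derivation used nothing about $\mathbf{y}$ and $\mathbf{z}$ beyond the coincidence of their first $K$ coordinates, so it applies verbatim in the present situation. The stated inequality $\hat{d}(\hat{g}(\mathbf{y}), \hat{g}(\mathbf{z}))\le \gamma\,\hat{d}(\mathbf{y}, \mathbf{z})$ then follows at once (in fact as an equality), which is precisely part (1) of Definition \ref{Smale}.

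There is essentially no obstacle here: the corollary is a direct consequence of the lemma once one notes that membership of two points in the same local stable set $\hat{Y}^s(\mathbf{x},\epsilon_{\hat{Y}}')$ propagates, via Lemma \ref{x0=y0}, to the coordinatewise agreement on indices $0,\dots,K-1$ needed to run the contraction estimate. The subtle work was already done inside the proof of Lemma \ref{x0=y0}; this corollary merely repackages it in terms of stable sets.
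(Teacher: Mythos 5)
Your proof is correct and is precisely the argument the paper intends when it states that the corollary ``follows easily from the proof of Lemma \ref{x0=y0}'': applying the lemma with $\epsilon=\epsilon_{\hat{Y}}'$ forces $y_m=x_m=z_m$ for $m=0,\dots,K-1$, and the telescoping computation via Observation \ref{d'} in the converse half of that proof then gives $\hat{d}(\hat{g}(\mathbf{y}),\hat{g}(\mathbf{z}))=\gamma\,\hat{d}(\mathbf{y},\mathbf{z})$, which implies the stated inequality. You have simply made explicit the steps the paper leaves to the reader.
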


Now let us consider the sets $\hat{Y}^u(\mathbf{x}, \epsilon)$.  We observe that the following lemma does not hold if we replace $\hat{d}$ with $d'$;  this is in fact the reason for our use of $\hat{d}$.  

\begin{lemma}\label{ulemma} 
For any $0<\epsilon \leq \epsilon_{\hat{Y}}'$, $\mathbf{y}\in \hat{Y}^u(\mathbf{z}, \epsilon)$ if and only if $d(y_n, z_n)\leq \epsilon$ for every $n\geq 0$ and $\hat{d}(\mathbf{y}, \mathbf{z})\leq\epsilon$.
\end{lemma}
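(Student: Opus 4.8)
The plan is to unwind the metric so that the entire statement becomes an elementary estimate on the single scalar sequence $a_n := d(y_n,z_n)$, $n\geq 0$. Since $\hat{g}^{-j}(\mathbf{y})_m = y_{m+j}$, we have $d'(\hat{g}^{-j}(\mathbf{y}),\hat{g}^{-j}(\mathbf{z})) = \sup_{m\geq 0}\gamma^m a_{j+m}$, which I abbreviate as $D_j$, and therefore
\[ S_n := \hat{d}(\hat{g}^{-n}(\mathbf{y}),\hat{g}^{-n}(\mathbf{z})) = \sum_{k=0}^{K-1}\gamma^{-k}D_{n+k}. \]
Membership $\mathbf{y}\in\hat{Y}^u(\mathbf{z},\epsilon)$ is by definition the assertion that $S_n\leq\epsilon$ for every $n\geq 0$, and $S_0 = \hat{d}(\mathbf{y},\mathbf{z})$. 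So the lemma is equivalent to the scalar statement: $S_n\leq\epsilon$ for all $n$ if and only if $a_n\leq\epsilon$ for all $n$ and $S_0\leq\epsilon$.

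The forward implication is routine. Because $D_n$ is the $k=0$ term of $S_n$ and every term is nonnegative, $a_n\leq D_n\leq S_n\leq\epsilon$ for each $n$, while the case $n=0$ gives directly $\hat{d}(\mathbf{y},\mathbf{z})=S_0\leq\epsilon$.

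For the converse I would first extract a fiberwise contraction from Axiom 1. Applying Axiom 1 to the pair $y_{j+2K},z_{j+2K}\in Y$ --- legitimate since $a_{j+2K}\leq\epsilon\leq\epsilon_{\hat{Y}}'\leq\beta$ --- and using $g^K(y_{j+2K})=y_{j+K}$ and $g^{2K}(y_{j+2K})=y_j$, yields
\[ a_{j+K}\leq\gamma^K a_j \qquad (j\geq 0). \]
Feeding this into the definition of $D_j$ gives at once $D_{n+K}=\sup_{m\geq 0}\gamma^m a_{n+K+m}\leq\gamma^K\sup_{m\geq 0}\gamma^m a_{n+m}=\gamma^K D_n$. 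A routine index shift in the defining sum establishes the recursion $S_{n+1}=\gamma(S_n-D_n)+\gamma^{1-K}D_{n+K}$, and combining it with $\gamma^{1-K}D_{n+K}\leq\gamma D_n$ gives $S_{n+1}\leq\gamma(S_n-D_n)+\gamma D_n=\gamma S_n$. Iterating, $S_n\leq\gamma^n S_0\leq\epsilon$, which is exactly $\mathbf{y}\in\hat{Y}^u(\mathbf{z},\epsilon)$.

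The main obstacle, and the conceptual heart, is the converse: one must see that the geometric weights $\gamma^{-k}$ appearing in $\hat{d}$ are precisely offset by the contraction $a_{j+K}\leq\gamma^K a_j$ coming from Axiom 1, so that $S_n$ in fact decays geometrically rather than growing. This is also why the weighted sum defining $\hat{d}$, rather than $d'$ alone, is needed: the condition $\hat{d}(\mathbf{y},\mathbf{z})\leq\epsilon$ is strictly stronger than $d'(\mathbf{y},\mathbf{z})\leq\epsilon$, and it is exactly the bookkeeping that makes the telescoping estimate close.
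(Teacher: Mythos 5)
Your proof is correct and takes essentially the same route as the paper: Axiom~1 yields the contraction on the $d'$-terms (your $D_{n+K}\leq\gamma^K D_n$ is the paper's $d'(\hat{g}^{-K}(\mathbf{y}),\hat{g}^{-K}(\mathbf{z}))\leq\gamma^K d'(\mathbf{y},\mathbf{z})$), and the telescoping of the weighted sum gives the one-step estimate $S_{n+1}\leq\gamma S_n$, which is exactly the paper's $\hat{d}(\hat{g}^{-1}(\mathbf{y}),\hat{g}^{-1}(\mathbf{z}))\leq\gamma\hat{d}(\mathbf{y},\mathbf{z})$ iterated. The only difference is presentational: you run the recursion for all $n$ simultaneously in scalar notation, whereas the paper proves the one-step bound and then reapplies it to the shifted points $\hat{g}^{-s}(\mathbf{y})$, $\hat{g}^{-s}(\mathbf{z})$.
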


\begin{proof} 
Let $0<\epsilon \leq \epsilon_{\hat{Y}}'$.  

If $\mathbf{y}\in \hat{Y}^u(\mathbf{z}, \epsilon)$ then 
\begin{align*}
d(y_n,z_n)&=d(\hat{g}^{-n}(\mathbf{y})_0, \hat{g}^{-n}(\mathbf{z})_0)\\
& \leq d'(\hat{g}^{-n}(\mathbf{y}), \hat{g}^{-n}(\mathbf{z}))\\
& \leq \hat{d}(\hat{g}^{-n}(\mathbf{y}), \hat{g}^{-n}(\mathbf{z}))\\
&\leq \epsilon
\end{align*}
for all $n\geq 0$.  

Conversely, suppose $d(y_n, z_n)\leq\epsilon$ for all $n\geq 0$ and $\hat{d}(\mathbf{y}, \mathbf{z})\leq \epsilon$.
Since $\epsilon\leq \epsilon_{\hat{Y}}'<\beta$, we can apply Axiom 1 to get $d(g^K(y_n), g^K(z_n))\leq \gamma^K d(g^{2K}(y_n), g^{2K}(z_n))$ for all $n\geq 0$.  Hence
\begin{align*}
d'(\hat{g}^{-K}(\mathbf{y}), \hat{g}^{-K}(\mathbf{z}))& = \mathrm{sup}_{n\geq 0} \lbrace \gamma^n d(y_{K+n},z_{K+n}) \rbrace\\
& =\mathrm{sup}_{n\geq 0} \lbrace \gamma^n d(g^K(y_{2K+n}),g^K(z_{2K+n})) \rbrace\\
& \leq \gamma^K \mathrm{sup}_{n\geq 0} \lbrace \gamma^n d(g^{2K}(y_{2K+n}),g^{2K}(z_{2K+n})) \rbrace\\
& =\gamma^K\mathrm{sup}_{n\geq 0} \lbrace \gamma^n d(y_{n},z_{n}) \rbrace\\
&=\gamma^K d'(\mathbf{y}, \mathbf{z}), 
\end{align*}
which gives
\begin{align*}
\hat{d}(\hat{g}^{-1}(\mathbf{y}), \hat{g}^{-1}(\mathbf{z}))& = \sum_{m=0}^{K-1}\gamma^{-m}d'(\hat{g}^{-m-1}(\mathbf{y}), \hat{g}^{-m-1}(\mathbf{z}))\\
& \leq \gamma^{-(K-1)}\gamma^K d'(\mathbf{y}, \mathbf{z}) + \sum_{m=0}^{K-2}\gamma^{-m}d'(\hat{g}^{-m-1}(\mathbf{y}), \hat{g}^{-m-1}(\mathbf{z}))\\
& =\gamma \left(d'(\mathbf{y}, \mathbf{z}) +\sum_{m=1}^{K-1}\gamma^{-m}d'(\hat{g}^{-m}(\mathbf{y}), \hat{g}^{-m}(\mathbf{z}))\right)\\
&=\gamma \hat{d}(\mathbf{y}, \mathbf{z}).  
\end{align*}

We have shown that $d(y_n, z_n)\leq\beta$ for all $n\geq 0$ implies that 
\[ \hat{d}(\hat{g}^{-1}(\mathbf{y}), \hat{g}^{-1}(\mathbf{z})) \leq \gamma \hat{d}(\mathbf{y}, \mathbf{z}).\]
Let us apply this result to $\hat{g}^{-s}(\mathbf{y})$ and $\hat{g}^{-s}(\mathbf{z})$, where $s\geq 0$.  
We have 
$$d(\hat{g}^{-s}(\mathbf{y})_n, \hat{g}^{-s}(\mathbf{z})_n)=d(y_{n+s}, z_{n+s})\leq\epsilon_{\hat{Y}}'$$ for all $n\geq 0$.  It follows that 
$$\hat{d}(\hat{g}^{-s-1}(\mathbf{y}), \hat{g}^{-s-1}(\mathbf{z}))\leq \gamma \hat{d}(\hat{g}^{-s}(\mathbf{y}), \hat{g}^{-s}(\mathbf{z})),$$
and this is for any $s\geq 0$.  Therefore
$$\hat{d}(\hat{g}^{-n}(\mathbf{y}), \hat{g}^{-n}(\mathbf{z}))\leq \gamma^n \hat{d}(\mathbf{y}, \mathbf{z})\leq \epsilon$$
for every $n\geq 0$.
\end{proof} \medbreak

The following property follows easily from the proof of Lemma \ref{ulemma}.  This is part (2) of Definition \ref{Smale}.

\begin{cor}\label{expanding}
If $\mathbf{y}, \mathbf{z} \in \hat{Y}^u(\mathbf{x}, \epsilon_{\hat{Y}}')$, then 
$\hat{d}(\hat{g}^{-1}(\mathbf{y}), \hat{g}^{-1}(\mathbf{z}))\leq \gamma\hat{d}(\mathbf{y}, \mathbf{z}).$
\end{cor}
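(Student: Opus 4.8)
The plan is to isolate the intermediate implication already established inside the proof of Lemma \ref{ulemma} and then verify that its hypothesis is automatically met here. Concretely, that proof shows the purely coordinate-wise statement
\[ d(y_n,z_n)\le\beta \text{ for all } n\ge 0 \ \Longrightarrow\ \hat{d}(\hat{g}^{-1}(\mathbf{y}),\hat{g}^{-1}(\mathbf{z}))\le\gamma\,\hat{d}(\mathbf{y},\mathbf{z}), \]
and this is precisely the inequality we want. So the only thing left to do is to deduce the bound $d(y_n,z_n)\le\beta$ for every $n\ge 0$ from the corollary's hypothesis $\mathbf{y},\mathbf{z}\in\hat{Y}^u(\mathbf{x},\epsilon_{\hat{Y}}')$.

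First I would apply the forward direction of Lemma \ref{ulemma} twice, with $\epsilon=\epsilon_{\hat{Y}}'$ and with the base point taken to be $\mathbf{x}$. From $\mathbf{y}\in\hat{Y}^u(\mathbf{x},\epsilon_{\hat{Y}}')$ we obtain $d(y_n,x_n)\le\epsilon_{\hat{Y}}'$ for every $n\ge 0$, and from $\mathbf{z}\in\hat{Y}^u(\mathbf{x},\epsilon_{\hat{Y}}')$ we obtain $d(z_n,x_n)\le\epsilon_{\hat{Y}}'$ for every $n\ge 0$. The triangle inequality then gives
\[ d(y_n,z_n)\le d(y_n,x_n)+d(x_n,z_n)\le 2\epsilon_{\hat{Y}}'\le\beta \quad\text{for all } n\ge 0, \]
where the last step uses the standing choice $\epsilon_{\hat{Y}}'\le\tfrac{1}{2}\beta$. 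Feeding this bound into the implication extracted above yields $\hat{d}(\hat{g}^{-1}(\mathbf{y}),\hat{g}^{-1}(\mathbf{z}))\le\gamma\,\hat{d}(\mathbf{y},\mathbf{z})$, completing the argument.

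The only mild obstacle is that the corollary concerns two points $\mathbf{y},\mathbf{z}$ lying in the local unstable set of a \emph{common} third point $\mathbf{x}$, rather than one point lying in the local unstable set of the other; consequently one cannot simply apply Lemma \ref{ulemma} directly to the pair $(\mathbf{y},\mathbf{z})$. Passing through the coordinate-wise estimate $d(y_n,z_n)\le\beta$ circumvents this, and the factor-of-two loss in the triangle inequality is exactly what the choice $\epsilon_{\hat{Y}}'\le\tfrac{1}{2}\beta$ was made to absorb. I expect no genuine difficulty beyond this bookkeeping, which is why the result is stated as an immediate corollary.
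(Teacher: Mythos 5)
Your proposal is correct and matches the paper's intent exactly: the paper's ``proof'' is the single remark that the corollary follows from the proof of Lemma \ref{ulemma}, meaning precisely the one-step implication you isolate ($d(y_n,z_n)\leq\beta$ for all $n\geq 0$ implies $\hat{d}(\hat{g}^{-1}(\mathbf{y}),\hat{g}^{-1}(\mathbf{z}))\leq\gamma\hat{d}(\mathbf{y},\mathbf{z})$), whose hypothesis is verified exactly as you do, via the coordinate-wise bounds from the forward direction of the lemma, the triangle inequality, and the choice $\epsilon_{\hat{Y}}'\leq\tfrac{1}{2}\beta$. Your closing observation about why the lemma cannot be applied directly to the pair $(\mathbf{y},\mathbf{z})$ is also the right reason the detour through the $\beta$-bound is needed.
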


\bigskip

Choose
$$0<\epsilon_{\hat{Y}}''\leq  \frac{1}{2}\epsilon_{\hat{Y}}'$$ 
such that $\hat{d}(\mathbf{x}, \mathbf{y})\leq \epsilon_{\hat{Y}}''$ implies $\hat{d}(\hat{g}(\mathbf{x}), \hat{g}(\mathbf{y}))\leq \epsilon_{\hat{Y}}'$.  
Then choose
$$0<\epsilon_{\hat{Y}}\leq \frac{1}{2K}\gamma^K\epsilon_{\hat{Y}}''$$ 
such that
$d(x, y)\leq \epsilon_{\hat{Y}}$ implies $d(g^n(x), g^n(y))\leq \frac{1}{2K}\gamma^{K-1}\epsilon_{\hat{Y}}''$ for $n=K, \cdots, 2K-1$.

\begin{lemma}\label{singleton}
If $\hat{d}(\mathbf{x}, \mathbf{y})\leq \epsilon_{\hat{Y}}$ then $\hat{Y}^s(\mathbf{x}, \epsilon_{\hat{Y}}')\cap \hat{Y}^u(\mathbf{y}, \epsilon_{\hat{Y}}')$ is a singleton.  
\end{lemma}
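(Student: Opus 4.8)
The plan is to recast the two membership conditions using Lemmas \ref{x0=y0} and \ref{ulemma}, and then to settle existence and uniqueness of the intersection point separately. By Lemma \ref{x0=y0}, a point $\mathbf{w}\in\hat{Y}$ lies in $\hat{Y}^s(\mathbf{x},\epsilon_{\hat{Y}}')$ exactly when $w_m=x_m$ for $m=0,\dots,K-1$ and $\hat{d}(\mathbf{w},\mathbf{x})\leq\epsilon_{\hat{Y}}'$, while by Lemma \ref{ulemma} it lies in $\hat{Y}^u(\mathbf{y},\epsilon_{\hat{Y}}')$ exactly when $d(w_n,y_n)\leq\epsilon_{\hat{Y}}'$ for all $n\geq 0$ and $\hat{d}(\mathbf{w},\mathbf{y})\leq\epsilon_{\hat{Y}}'$. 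Since any element of $\hat{Y}$ satisfies $g(w_{n+1})=w_n$, pinning the single coordinate $w_{K-1}=x_{K-1}$ already forces $w_m=x_m$ for every $m\leq K-1$. Extracting from $\hat{d}(\mathbf{x},\mathbf{y})\leq\epsilon_{\hat{Y}}$ the bound on the $k=K-1$ summand gives $d(x_{K-1},y_{K-1})\leq\gamma^{K-1}\epsilon_{\hat{Y}}$, which is small. Thus I am reduced to producing a backward $g$-orbit $x_{K-1},w_K,w_{K+1},\dots$ of $x_{K-1}$ that shadows the backward orbit $y_{K-1},y_K,y_{K+1},\dots$ of $y_{K-1}$ in the coordinatewise sense $d(w_n,y_n)\leq\epsilon_{\hat{Y}}'$, together with the two $\hat{d}$-estimates.

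\textbf{Existence.} I would start from the small anchor discrepancy at level $K-1$ and apply Axiom 2 repeatedly, in blocks of $K$ indices, to push the requirement ``the orbit passes through $x_{K-1}$'' up the backward orbit: at stage $j$ one obtains an \emph{exact} partial backward $g$-orbit of $x_{K-1}$ whose coordinates sit within geometrically shrinking distances (successive factors of $\gamma$, supplied precisely by the contraction in the containment $g^K(B(g^K(x),\epsilon))\subseteq g^{2K}(B(x,\gamma\epsilon))$) of the corresponding coordinates of $\mathbf{y}$. Each partial orbit determines a point of $\hat{Y}$, and since the corrections are summable the resulting sequence is $\hat{d}$-Cauchy (alternatively one passes to a convergent subsequence using compactness of $\hat{Y}$). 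Its limit $\mathbf{w}$ inherits the closed conditions $w_{K-1}=x_{K-1}$ and $d(w_n,y_n)\leq\epsilon_{\hat{Y}}'$. The calibrated choices $\epsilon_{\hat{Y}}\leq\frac{1}{2K}\gamma^K\epsilon_{\hat{Y}}''$ and $d(g^n(x),g^n(y))\leq\frac{1}{2K}\gamma^{K-1}\epsilon_{\hat{Y}}''$ for $n=K,\dots,2K-1$ are exactly what bound the total accumulated correction across the $K$ summands of $\hat{d}$, yielding $\hat{d}(\mathbf{w},\mathbf{x})\leq\epsilon_{\hat{Y}}'$ and $\hat{d}(\mathbf{w},\mathbf{y})\leq\epsilon_{\hat{Y}}'$. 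Lemmas \ref{x0=y0} and \ref{ulemma} then place $\mathbf{w}$ in the intersection.

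\textbf{Uniqueness.} Suppose $\mathbf{w},\mathbf{w}'$ both lie in $\hat{Y}^s(\mathbf{x},\epsilon_{\hat{Y}}')\cap\hat{Y}^u(\mathbf{y},\epsilon_{\hat{Y}}')$. They agree with $\mathbf{x}$ in coordinates $0,\dots,K-1$, so the converse computation in Lemma \ref{x0=y0} gives the forward relation $\hat{d}(\hat{g}(\mathbf{w}),\hat{g}(\mathbf{w}'))=\gamma\,\hat{d}(\mathbf{w},\mathbf{w}')$. On the other hand $d(w_n,y_n)\leq\epsilon_{\hat{Y}}'$ and $d(w_n',y_n)\leq\epsilon_{\hat{Y}}'$ force $d(w_n,w_n')\leq 2\epsilon_{\hat{Y}}'\leq\beta$ for all $n$, and the coordinates of $\hat{g}(\mathbf{w})$ and $\hat{g}(\mathbf{w}')$ are likewise within $\beta$ (the new $0$-th coordinate $g(x_0)$ agrees). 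Hence the converse computation in Lemma \ref{ulemma} applies to the pair $\hat{g}(\mathbf{w}),\hat{g}(\mathbf{w}')$ and yields $\hat{d}(\mathbf{w},\mathbf{w}')\leq\gamma\,\hat{d}(\hat{g}(\mathbf{w}),\hat{g}(\mathbf{w}'))$. Combining the two relations gives $\hat{d}(\mathbf{w},\mathbf{w}')\leq\gamma^2\,\hat{d}(\mathbf{w},\mathbf{w}')$, and since $\gamma<1$ this forces $\hat{d}(\mathbf{w},\mathbf{w}')=0$, i.e. $\mathbf{w}=\mathbf{w}'$.

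The main obstacle is the existence construction: Axiom 2 naturally produces preimages $K$ steps at a time, so the genuine difficulty is controlling \emph{every} coordinate $d(w_n,y_n)$ — including the indices lying strictly between successive lifted blocks — and then assembling the individual geometric estimates into the two $\hat{d}$-bounds. This is precisely where the openness-type strength of Axiom 2 and the specific scaling of the constants $\epsilon_{\hat{Y}},\epsilon_{\hat{Y}}',\epsilon_{\hat{Y}}''$ are used, and it is the step requiring the most care; the uniqueness argument and the reformulation via the two lemmas are comparatively routine.
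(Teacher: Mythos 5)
Your uniqueness argument is correct and takes a genuinely different route from the paper's. The paper argues coordinatewise: for two points $\mathbf{v},\mathbf{z}$ in the intersection it gets $v_m=z_m$ for $m\leq K-1$ from Lemma \ref{x0=y0}, the bound $d(v_n,z_n)\leq 2\epsilon_{\hat{Y}}'\leq\beta$ from Lemma \ref{ulemma}, and then applies Axiom 1 at coordinate $m+2K$ to propagate $v_m=z_m$ to $v_{m+K}=z_{m+K}$ by induction. You instead reuse the two already-established contraction computations, $\hat{d}(\hat{g}(\mathbf{w}),\hat{g}(\mathbf{w}'))=\gamma\,\hat{d}(\mathbf{w},\mathbf{w}')$ (from agreement in coordinates $0,\dots,K-1$, via Observation \ref{d'}) and $\hat{d}(\mathbf{w},\mathbf{w}')\leq\gamma\,\hat{d}(\hat{g}(\mathbf{w}),\hat{g}(\mathbf{w}'))$ (from coordinatewise $\beta$-closeness of $\hat{g}(\mathbf{w}),\hat{g}(\mathbf{w}')$, which you verify correctly including the new $0$-th coordinates), to force $\hat{d}(\mathbf{w},\mathbf{w}')\leq\gamma^2\hat{d}(\mathbf{w},\mathbf{w}')=0$. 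This is the standard expansivity argument; it is cleaner in that nothing is re-derived, while the paper's induction gives the same conclusion with an explicit coordinate-level picture. Both ultimately rest on Axiom 1.

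The existence half is where your sketch, as written, stalls at the very first step. You anchor at $d(x_{K-1},y_{K-1})\leq\gamma^{K-1}\epsilon_{\hat{Y}}$ and propose to launch Axiom 2 from there. But Axiom 2 has the form $g^K(B(g^K(x),\epsilon))\subseteq g^{2K}(B(x,\gamma\epsilon))$: to invoke it while preserving the constraint that the backward orbit pass \emph{exactly} through $x_{K-1}$, you must exhibit $x_{K-1}$ as $g^K$ of a point in the ball $B(g^K(y_{3K-1}),\epsilon)=B(y_{2K-1},\epsilon)$, i.e.\ you need a designated $g^K$-preimage of $x_{K-1}$ close to $y_{2K-1}$. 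Closeness of $x_{K-1}$ to $y_{K-1}$ alone only lets you apply Axiom 2 to $g^K(x_{K-1})=g(x_0)$, producing preimages of $g(x_0)$ near $y_{2K-1}$; since $g^K$ is not injective these need not map through $x_{K-1}$, so the pass-through requirement is lost. The paper therefore anchors one block higher: the same $k=K-1$ summand of $\hat{d}$, taking the $n=K$ term in the supremum defining $d'$, gives $d(x_{2K-1},y_{2K-1})\leq\gamma^{-1}\epsilon_{\hat{Y}}<\beta$ (its \eqref{inball}), and the first application of Axiom 2 reads $x_{K-1}=g^K(x_{2K-1})\in g^K(B(g^K(y_{3K-1}),\gamma^{-1}\epsilon_{\hat{Y}}))\subseteq g^{2K}(B(y_{3K-1},\epsilon_{\hat{Y}}))$. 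Once this correction is made, the rest of your description (blockwise lifting with radii shrinking by factors of $\gamma$, the calibrated constants controlling the intermediate coordinates, assembling the limit point) matches the paper's construction, and the paper itself leaves the final verification that the constructed point lies in the intersection as an omitted routine check. So the defect is localized and repairable, but as stated your existence construction cannot get started.
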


\begin{proof}
Let $\hat{d}(\mathbf{x}, \mathbf{y})\leq \epsilon_{\hat{Y}}$. Notice that we have
\begin{align*}
\gamma^{-(K-1)}(\gamma^{K} d(x_{2K-1},y_{2K-1}) ) & \leq \gamma^{-(K-1)}d'(\hat{g}^{-(K-1)}(\mathbf{x}), \hat{g}^{-(K-1)}(\mathbf{y})) \\
& \leq \hat{d}(\mathbf{x}, \mathbf{y})\\
& \leq \epsilon_{\hat{Y}}.
\end{align*}
That is, 
\begin{equation}\label{inball}
d(x_{2K-1},y_{2K-1}) \leq \gamma^{-1}\epsilon_{\hat{Y}}<\beta.
\end{equation}

Let us define a point $\mathbf{z}$ by defining $z_{sK}, \cdots, z_{(s+1)K-1)}$ inductively on $s$.  Let  $z_m=x_m$ for $m=0, \cdots K-1$.  
By \eqref{inball} and Axiom 2, we have 
\begin{align*}
z_{K-1} & = x_{K-1}\\
&= g^K(x_{2K-1}) \\
&  \in g^K(B(y_{2K-1}, \gamma^{-1}\epsilon_{\hat{Y}})) \\
& =g^K(B(g^K(y_{3K-1}), \gamma^{-1}\epsilon_{\hat{Y}})) \\
& \subseteq g^{2K}(B(y_{3K-1},\epsilon_{\hat{Y}})),
\end{align*}
so $z_{K-1}=g^{2K}(u_{3K-1})$ for some 
\begin{equation}\label{1st}
u_{3K-1}\in B(y_{3K-1},\epsilon_{\hat{Y}}).
\end{equation}  
Define
\begin{align*}
z_{2K-1} & = g^K(u_{3K-1}) \\
z_{2K-2} & = g(z_{2K-1})=g^{K+1}(u_{3K-1})  \\
& \;\; \vdots \\
z_{K}& = g(z_{K+1})=g^{2K-1}(u_{3K-1}) .
\end{align*}
Observe that we have $g(z_K)=g^{2K}(u_{3K-1})=z_{K-1}$.  

We then use \eqref{1st} and Axiom 2 to get 
\[u_{4K-1}\in B(y_{4K-1}, \epsilon_{\hat{Y}})\]
such that $g^K(u_{3K-1})=g^{2K}(u_{4K-1})$.  We use $u_{4K-1}$ to define $z_{2K},\cdots, z_{3K-1}$; and so on.  
Our construction ensures that $\mathbf{z}\equiv (z_0, z_1, \cdots) \in \hat{Y}$.  

Using Lemmas \ref{x0=y0} and \ref{expanding}, it can be shown that 
\[\mathbf{z}\in \hat{Y}^s(\mathbf{x}, \epsilon_{\hat{Y}}')\cap \hat{Y}^u(\mathbf{y}, \epsilon_{\hat{Y}}'). \]
This is a technical, but not difficult, proof and is hence omitted.  

Let us show that $\mathbf{z}$ is the only point in $\hat{Y}^s(\mathbf{x}, \epsilon_{\hat{Y}}')\cap \hat{Y}^u(\mathbf{y}, \epsilon_{\hat{Y}}')$.  Suppose 
$$\mathbf{v}\in \hat{Y}^s(\mathbf{x}, \epsilon_{\hat{Y}}')\cap \hat{Y}^u(\mathbf{y}, \epsilon_{\hat{Y}}').$$ 
Since $\mathbf{v}, \mathbf{z}\in \hat{Y}^s(\mathbf{x}, \epsilon_{\hat{Y}}')$, we have by Lemma \ref{x0=y0} that  $v_m=x_m=z_m$ for \linebreak $m=0,\cdots, K-1$.  And by Lemma \ref{ulemma}, $\mathbf{v}, \mathbf{z}\in \hat{Y}^u(\mathbf{y},\epsilon_{\hat{Y}}')$ implies 
\begin{equation}\label{vnzn}
d(v_n, z_n)\leq d(v_n,y_n)+d(y_n,z_n)\leq 2\epsilon_{\hat{Y}}'\leq \beta
\end{equation}
for all $n\geq 0$.  
We will complete the proof by induction, by showing that $v_m= \nolinebreak z_m$ \linebreak implies $v_{m+K}=z_{m+K}$.  So suppose that $v_m=z_m$.  We have $d(v_{m+2K}, z_{m+2K})\leq\beta$ from \eqref{vnzn}, and we have assumed that $d(g^{2K}(v_{m+2K}),g^{2K}(z_{m+2K}))=d(v_m,z_m)=\nolinebreak 0$.  It follows by Axiom 1 that 
$v_{m+K}=g^K(v_{m+2K})=g^K(z_{m+2K})=z_{m+K}$.
\end{proof} \medbreak

For points $\mathbf{x},\mathbf{y} \in \hat{Y}$ with $\hat{d}(\mathbf{x},\mathbf{y})\leq\epsilon_{\hat{Y}}$, the bracket $[\mathbf{x},\mathbf{y}]$ is now defined as the unique point in the intersection $\hat{Y}^s(\mathbf{x}, \epsilon_{\hat{Y}}')\cap \hat{Y}^u(\mathbf{y}, \epsilon_{\hat{Y}}')$.  This completes the proof that Axioms 1 and 2 imply that $(\hat{Y}, \hat{d}, \hat{g})$ is a Smale space.  Now we will show that this Smale space has totally disconnected local stable sets.  We will need the following lemma.

\begin{lemma}\label{finite}
Axiom 1 implies that $g$ is finite-to-one.
\end{lemma}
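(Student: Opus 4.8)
The plan is to reduce everything to a single consequence of Axiom~1 and then run a compactness argument. First I would extract from Axiom~1 the statement that already appears implicitly at the end of the proof of Lemma~\ref{singleton}: if $d(x,y)\le\beta$ and $g^{2K}(x)=g^{2K}(y)$, then the right-hand side of the Axiom~1 inequality vanishes, forcing $d(g^K(x),g^K(y))\le\gamma^K\cdot 0=0$, i.e.\ $g^K(x)=g^K(y)$. Thus on any set of diameter at most $\beta$, the map $g^K$ is constant along the fibres of $g^{2K}$. Since a bound on the fibres of $g^K$ immediately bounds those of $g$ --- for $g(z)=y$ we have $g^K(z)=g^{K-1}(y)$, so $g^{-1}(y)\subseteq (g^K)^{-1}(g^{K-1}(y))$ --- it suffices to bound the cardinality of $(g^K)^{-1}(w)$ uniformly in $w$.

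Next I would use compactness of $Y$ to choose a finite cover $B_1,\dots,B_N$ of $Y$ by sets of diameter at most $\beta$. Fix $w\in Y$ and consider the (possibly infinite) fibre $(g^{2K})^{-1}(w)$. Any two points $x,y$ lying in the same $B_i$ satisfy $d(x,y)\le\beta$ and $g^{2K}(x)=g^{2K}(y)=w$, so by the observation above $g^K(x)=g^K(y)$. Hence $g^K$ maps $(g^{2K})^{-1}(w)\cap B_i$ to a single point, and since the $B_i$ cover $Y$ the image $g^K\big((g^{2K})^{-1}(w)\big)$ consists of at most $N$ points.

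Finally I would identify this image with the fibre I actually want to count. Using that $g$, hence $g^K$, is surjective, I claim $g^K\big((g^{2K})^{-1}(w)\big)=(g^K)^{-1}(w)$: the inclusion $\subseteq$ is immediate since $g^K(g^K(x))=g^{2K}(x)=w$, and for $\supseteq$, given $u$ with $g^K(u)=w$ surjectivity yields $x$ with $g^K(x)=u$, whence $g^{2K}(x)=w$ and $u=g^K(x)$ lies in the image. Therefore $|(g^K)^{-1}(w)|\le N$ for every $w$, so $g^K$, and hence $g$, is finite-to-one, in fact with a uniform bound on fibre cardinality.

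I expect the only real subtlety to be the bookkeeping in the last step: the natural object produced by the compactness argument is the $g^K$-image of a $g^{2K}$-fibre, and one must recognize --- via surjectivity of $g^K$ --- that this image is exactly the $g^K$-fibre over $w$, rather than a proper subset. Everything else (the specialization of Axiom~1 to coincident $g^{2K}$-values, and the extraction of a finite subcover) is routine.
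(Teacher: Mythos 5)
Your proof is correct and uses essentially the same ingredients as the paper's: the observation that Axiom 1 forces $g^K(x)=g^K(y)$ whenever $d(x,y)\le\beta$ and $g^{2K}(x)=g^{2K}(y)$, the surjectivity of $g^K$ (coming from that of $g$), and compactness of $Y$. The paper merely packages these as a proof by contradiction --- an infinite $g$-fibre is lifted through $g^K$ and an accumulation point yields two $\beta$-close points violating Axiom 1 --- while your finite-cover count is the same argument run directly, with the small bonus of a uniform bound $N$ on the cardinality of every fibre.
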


\begin{proof}
Suppose that $Y$ contains an infinite sequence $(y_n)$ of distinct points all having the same image under $g$.
As $g$ is onto, so is $g^K$.  For each $n$, pick $z_n$ with $g^K(z_n) = y_n$. Then $(z_n)$
must have an accumulation point, so we may find $z_m$ and $z_n$ with $m\neq n$ and $d(z_m, z_n)\leq \beta$. So we have $g^{2K}(z_m)= g^{2K}(z_n)$, but $g^K(z_m) =y_m$ and $g^K(z_n) = y_n$ are distinct; this contradicts Axiom 1.
\end{proof} \medbreak

\begin{prop}
If $(Y,d,g)$ satisfies Axioms 1 and 2, then the Smale space $(\hat{Y}, \hat{d}, \hat{g})$ has totally disconnected local stable sets.
\end{prop}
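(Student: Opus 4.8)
The plan is to show that each local stable set $L=\hat{Y}^s(\mathbf{x},\epsilon)$ is totally disconnected by embedding it continuously and injectively into a product of finite discrete spaces. Since $\hat{Y}^s(\mathbf{x},\epsilon)\subseteq \hat{Y}^s(\mathbf{x},\epsilon_{\hat{Y}}')$ whenever $\epsilon\leq\epsilon_{\hat{Y}}'$, and since subspaces of totally disconnected spaces are totally disconnected, it suffices to treat $L=\hat{Y}^s(\mathbf{x},\epsilon_{\hat{Y}}')$. By Lemma \ref{x0=y0}, a point $\mathbf{y}$ lies in $L$ exactly when $y_m=x_m$ for $m=0,\dots,K-1$ and $\hat{d}(\mathbf{y},\mathbf{x})\leq\epsilon_{\hat{Y}}'$. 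The key observation is that fixing the coordinate $y_{K-1}=x_{K-1}$ confines every later coordinate to a fixed finite set: for $n\geq K$ we have $g^{n-K+1}(y_n)=y_{K-1}=x_{K-1}$, so $y_n$ lies in the fiber $F_n:=(g^{n-K+1})^{-1}(\{x_{K-1}\})$, which does not depend on $\mathbf{y}$.

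By Lemma \ref{finite}, $g$ is finite-to-one, and hence so is every iterate $g^{j}$; therefore each $F_n$ is a finite subset of $Y$, and as a finite subset of the Hausdorff space $Y$ it carries the discrete topology in the subspace topology. I would then define
\[
\Phi\colon L\longrightarrow \prod_{n\geq K}F_n,\qquad \Phi(\mathbf{y})=(y_n)_{n\geq K}.
\]
This map is injective, because the whole sequence $\mathbf{y}$ is recovered from its tail via $y_m=g^{K-m}(y_K)$ for $m<K$ (together with the fixed values $y_0,\dots,y_{K-1}$). It is continuous, because each component $\mathbf{y}\mapsto y_n$ is the restriction to $L$ of the continuous coordinate projection $\hat{Y}\to Y$, and its image lies in the \emph{discrete} finite set $F_n$; concretely, the preimage of a point $p\in F_n$ is the intersection of $L$ with the projection-preimage of an open subset $U\subseteq Y$ satisfying $U\cap F_n=\{p\}$, which is open in $L$.

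Finally, $\prod_{n\geq K}F_n$ is a product of finite discrete spaces and is therefore totally disconnected. If $C\subseteq L$ is connected, then $\Phi(C)$ is a connected subset of $\prod_{n\geq K}F_n$, hence a single point, so by injectivity $C$ is a single point; thus the connected components of $L$ are singletons and $L$ is totally disconnected. The step I expect to be the crux is the key observation of the first paragraph together with its use in proving continuity of $\Phi$: it is precisely the combination of the local stable set description (Lemma \ref{x0=y0}, which pins down $y_{K-1}$) with the finite-to-one property (Lemma \ref{finite}, which makes the fibers finite) that forces each coordinate of a point of $L$ into a \emph{fixed, finite, hence discrete} set. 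Once this confinement is established, the remainder of the argument is formal.
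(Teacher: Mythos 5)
Your proposal is correct and rests on exactly the same two ingredients as the paper's proof: Lemma \ref{x0=y0} pins down the initial coordinates of points in the local stable set, and Lemma \ref{finite} forces every later coordinate into a fixed finite fiber of an iterate of $g$. The only difference is the final topological packaging — the paper observes directly that the $\pi_n$-preimages of points in these finite fibers are clopen in the local stable set and thus separate any two distinct points, while you encode the same confinement as a continuous injection into a product of finite discrete spaces; this is a cosmetic variation, not a different route.
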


\begin{proof}
For $n\geq 0$, denote by $\pi_n: \hat{Y}\rightarrow Y$ the projection map $\pi_n(y_0, y_1, y_2, \cdots ) = \nolinebreak y_n$.  Choose $\mathbf{y}\in \hat{Y}$. 
By Lemma \ref{x0=y0}, every point in $\hat{Y}^s(\mathbf{y}, \epsilon_{\hat{Y}})$ has the same first coordinate, $y_0$.  Therefore, for any $n\geq 0$, the set $\pi_n(\hat{Y}^s(\mathbf{y}, \epsilon_{\hat{Y}}))\subseteq g^{-n}\lbrace y_0 \rbrace$ is finite by Lemma \ref{finite}.  
So the $\pi_n$ preimage of any point in this finite
set is clopen in $\hat{Y}^s(\mathbf{y}, \epsilon_{\hat{Y}})$. As a result, for any two distint points in $\hat{Y}^s(\mathbf{y}, \epsilon_{\hat{Y}})$, we can find a clopen set containing one but not the other.
\end{proof} \medbreak

A Smale space is said to be \textit{irreducible} if it is non-wandering and has a dense orbit.  It is well known that $(Y,g)$ has these properties if and only if its stationary inverse limit $(\hat{Y},\hat{g})$ does.  For a proof of these facts, see \cite{Aoki}.

\section{Proof of Theorem B}\label{thmBpf}

Let $(X,d,f)$ be an irreducible Smale space whose local stable sets are totally disconnected, with constants $\epsilon_X>0$, $\epsilon_X'>0$, and $0<\lambda<1$ as in Definition \ref{Smale}.

We will use a Markov partition with a special property to define an equivalence relation, $\sim$, on $X$.  We then define a metric, $\delta$, and a mapping, $\alpha$, on the quotient  $X/_\sim$.  We will show that $(X/_\sim, \delta, \alpha)$ satisfies Axioms 1 and 2, and that
$$\underleftarrow{\mathrm{lim}}\;X/_\sim \stackrel{\alpha}{\longleftarrow} X/_\sim \stackrel{\alpha}{\longleftarrow}  \cdots,$$
together with the map $\hat{\alpha}$ and metric $\hat{\delta}$, is conjugate to $(X,d,f)$. 

The relation $\sim$ has the effect of collapsing each Markov partition rectangle to a single unstable set (see Figure \ref{sim}).  These unstable sets may intersect on the boundaries, making the definition an appropriate metric on $X/_\sim$ rather difficult.  The other aspects of our construction of the inverse limit space are quite intuitive.  

\begin{figure}[h]\label{sim}
\psset{xunit=1.5cm,yunit=1.5cm}
\begin{center}
\begin{pspicture}(0,0)(5.3,2)
\pspolygon(0,0)(1.5,0)(1.5,1)(0,1)
\pspolygon(.7,1)(2.7,1)(2.7,1.8)(.7,1.8)
\pspolygon(1.8,0)(3,0)(3,1)(1.8,1)
\rput(4,1){$\longrightarrow$}
\rput(4,1.2){$\sim$}
\psline(5,1.8)(5,1)
\psline(5,1)(4.7,0)
\psline(5,1)(5.3,0)
\psdot(5,1)
\end{pspicture}
\end{center}
\caption{The equivalence relation $\sim$}
\end{figure}
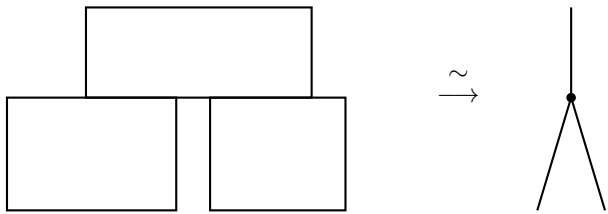

\subsection{Construction of the Quotient Space}

A non-empty set $R\subseteq X$ is called a \textit{rectangle} if 
$R=\overline{\mathrm{Int}(R)}$ and $[x,y]\in R$ whenever $x,y\in R$.  The second condition tells us that we must have  diam$(R)\leq \epsilon_X$.

For a rectangle $R$ and $x\in R$, we will denote $X^s(x,R)=X^s(x,\epsilon_X)\cap R$
and
$X^u(x,R)=X^u(x,\epsilon_X)\cap R.$

A finite cover $\mathcal{P}=\lbrace R_1, R_2, \cdots, R_n \rbrace$ of $X$ by rectangles is a \textit{Markov partition} provided that 
\begin{enumerate}
\item $\mathrm{Int}(R_i)\cap \mathrm{Int}(R_j)=\varnothing$ for $i\neq j$, and 
\item $f(X^s(x, R_i))\subseteq X^s(f(x),  R_j)$ and $f^{-1}(X^u(f(x),  R_j))\subseteq X^u(x, R_i)$
whenever $x\in \mathrm{Int}(R_i)\cap f^{-1}(\mathrm{Int}(R_j))$.  This is called the ``Markov property".
\end{enumerate}

Bowen \cite{Bowen} proved that all irreducible Smale spaces have Markov partitions.  But a generic Markov partition is not sufficient in our case.  We need a Markov partition where each rectangle is clopen in the stable direction.  Our proof of the existence of such a partition relies heavily on a number of properties of $s$-resolving factor maps; a factor map between two Smale spaces is $s$-resolving if it is  injective on the local stable sets of its domain. Alternatively, Proposition \ref{clopenMP} could also be proved by following Bowen's construction for a generic Markov partition and making some necessary adjustments along the way.  

\begin{prop}[Putnam \cite{Putnamnotes}]\label{unstab}
Let $\pi:(X,f)\rightarrow (Y,g)$ be an $s$-resolving factor map between irreducible Smale spaces.  Then
\begin{enumerate}
\item $\pi$ is a homeomorphism on the local stable sets $X^s(x,\epsilon)$, 
\item $\pi$ is finite-to-one, and
\item for every  point $y_0$ in $Y$ with a dense forward orbit
 we have 
$$ \# \pi^{-1}\{ y_0 \}=\deg(\pi)\equiv \mathrm{min}\lbrace \#\pi^{-1}\lbrace y \rbrace \; | \; y\in Y \rbrace.$$ 
\end{enumerate}
Furthermore, there exists $\epsilon_\pi>0$ such that
\begin{enumerate}[resume]
\item  for all $x_1,x_2\in X$ with $d_X(x_1, x_2)\leq \epsilon_\pi$, we have $[x_1, x_2]$ and $[\pi(x_1), \pi(x_2)]$ both defined and  
\[ [\pi(x_1), \pi(x_2)]=\pi([x_1,x_2]),\]
\item if $\pi(x_1)\in Y^u(\pi(x_2), \epsilon_Y)$ and $d(x_1, x_2)\leq \epsilon_\pi$, then $x_1\in X^u(x_2, \epsilon_\pi),$ and
\item if $x, x'\in X$ with $\pi(x)=\pi(x')$ and
$\liminf_{n\to \infty}  d(f^n(x), f^n(x')) < \epsilon_\pi$, then \linebreak $x= x'$. 
\end{enumerate}
\end{prop}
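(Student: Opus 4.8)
The entire proposition rests on part (4), so the plan is to prove it first. Since $X$ and $Y$ are compact, $\pi$ is uniformly continuous, and I would fix $\epsilon_\pi>0$ small enough that $d(x_1,x_2)\le\epsilon_\pi$ forces $d(\pi(x_1),\pi(x_2))\le\epsilon_Y$, so that both $[x_1,x_2]$ and $[\pi(x_1),\pi(x_2)]$ are defined, and moreover small enough that $d([x_1,x_2],x_1)$ and $d([x_1,x_2],x_2)$ are below the modulus of uniform continuity needed to land inside scale $\epsilon_Y'$. The point is that $w:=[x_1,x_2]$ lies in $X^s(x_1,\epsilon_X')\cap X^u(x_2,\epsilon_X')$, so by the stable contraction (condition (1) of Definition~\ref{Smale}) the distances $d(f^n w,f^n x_1)$ stay uniformly small for all $n\ge 0$, and by the unstable contraction (condition (2)) so do $d(f^{-n}w,f^{-n}x_2)$; applying $\pi$ and using $\pi f=g\pi$, the image $\pi(w)$ lies in $Y^s(\pi(x_1),\epsilon_Y')\cap Y^u(\pi(x_2),\epsilon_Y')$, which is the singleton $\{[\pi(x_1),\pi(x_2)]\}$. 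Uniqueness of the bracket then gives (4). This constant calibration is the fussiest step, and everything downstream quietly reuses it.

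With (4) in hand, part (1) is immediate: for $\epsilon\le\epsilon_X'$ the set $X^s(x,\epsilon)$ is closed, hence compact, $\pi$ restricted to it is continuous and, by the $s$-resolving hypothesis, injective, and uniform continuity places its image inside $Y^s(\pi(x),\epsilon_Y')$; a continuous injection from a compact space into a Hausdorff space is a homeomorphism onto its image. For (5) I would form $w=[x_1,x_2]\in X^s(x_1,\epsilon_X')\cap X^u(x_2,\epsilon_X')$. Because $\pi(x_1)\in Y^u(\pi(x_2),\epsilon_Y)$ already, the point $\pi(x_1)$ itself lies in $Y^s(\pi(x_1),\epsilon_Y')\cap Y^u(\pi(x_2),\epsilon_Y')$, so $[\pi(x_1),\pi(x_2)]=\pi(x_1)$, and (4) gives $\pi(w)=\pi(x_1)$. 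Since $w$ and $x_1$ both lie in $X^s(x_1,\epsilon_X')$ and have the same image, $s$-resolving injectivity forces $w=x_1$, whence $x_1=w\in X^u(x_2,\epsilon_X')$; shrinking $\epsilon_\pi$ records this as $x_1\in X^u(x_2,\epsilon_\pi)$.

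For (6) I would \emph{not} pass to a limit of the colliding orbits but argue at each near-approach time. Choosing $n_k\to\infty$ with $d(f^{n_k}x,f^{n_k}x')\le\epsilon_\pi$ and noting $\pi(f^{n_k}x)=\pi(f^{n_k}x')$, part (5) gives $f^{n_k}x\in X^u(f^{n_k}x',\epsilon_\pi)$ for every $k$. The unstable contraction then yields, for each fixed $j\ge 0$, $d(f^j x,f^j x')\le\lambda^{\,n_k-j}\epsilon_\pi$ for all large $k$, and letting $k\to\infty$ (so $n_k\to\infty$) forces $f^j x=f^j x'$; in particular $x=x'$.

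Finally, parts (2) and (3) both fall out of (6). For (2): if $x_1,\dots,x_m$ are distinct points of one fibre then (6) gives $\liminf_n d(f^n x_i,f^n x_j)\ge\epsilon_\pi$ for $i\ne j$, so there is an $N$ beyond which $f^N x_1,\dots,f^N x_m$ are $(\epsilon_\pi/2)$-separated; hence $m$ is bounded by the finite maximal size of an $(\epsilon_\pi/2)$-separated subset of $X$, making every fibre finite with a uniform bound. For (3): fix $y_0$ with dense forward orbit, write $\pi^{-1}\{y_0\}=\{x_1,\dots,x_k\}$, and let $y$ be arbitrary. Density supplies $n_j\to\infty$ with $g^{n_j}y_0\to y$; passing to a subsequence, each $f^{n_j}x_i\to\xi_i\in\pi^{-1}\{y\}$. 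If $\#\pi^{-1}\{y\}<k$ then $\xi_i=\xi_{i'}$ for some $i\ne i'$, so $\liminf_n d(f^n x_i,f^n x_{i'})=0<\epsilon_\pi$ and (6) gives $x_i=x_{i'}$, a contradiction; thus $\#\pi^{-1}\{y\}\ge k$ for all $y$, and since $k$ is attained it equals $\deg(\pi)$. I expect the main obstacle to be not any single later part but the careful choice of $\epsilon_\pi$ in (4), on which every subsequent bracket and contraction estimate silently depends; the cleverest individual step is the reduction of the degree computation (3) to the collision principle (6) via a pigeonhole argument on accumulating forward orbits.
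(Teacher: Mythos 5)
The first thing to note is that the paper never proves Proposition \ref{unstab} at all: it is quoted, with attribution, from Putnam's notes \cite{Putnamnotes} and used as a black box in the proof of Proposition \ref{clopenMP}. So there is no internal proof to compare yours against; what you have written is an independent reconstruction, and on its merits it is essentially correct. Your architecture --- calibrate $\epsilon_\pi$ once in (4) via uniform continuity of $\pi$ and of the bracket, get (1) from compactness plus $s$-resolving injectivity, get (5) by identifying $[\pi(x_1),\pi(x_2)]=\pi(x_1)$ and invoking injectivity of $\pi$ on $X^s(x_1,\epsilon_X')$, convert (5) into the collision principle (6), and then deduce finiteness (2) by a separated-set bound and the degree statement (3) by pushing the fibre over $y_0$ along the dense forward orbit --- is the standard route to these resolving-map facts, and each step goes through. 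Two calibration remarks: in (5), ``shrinking $\epsilon_\pi$'' at the end is not a legitimate move by itself (shrinking strengthens the hypothesis and the conclusion simultaneously); instead, once you have $x_1=w\in X^u(x_2,\epsilon_X')$, the unstable contraction gives $d(f^{-n}(x_1),f^{-n}(x_2))\le\lambda^n d(x_1,x_2)\le\epsilon_\pi$ for all $n\ge 0$, which is exactly the stated conclusion. Also, (5) needs the usual convention $\epsilon_Y\le\epsilon_Y'$ so that $\pi(x_1)\in Y^u(\pi(x_2),\epsilon_Y)$ really does place $\pi(x_1)$ in the singleton $Y^s(\pi(x_1),\epsilon_Y')\cap Y^u(\pi(x_2),\epsilon_Y')$.

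The one genuine gap is in (3): ``density supplies $n_j\to\infty$ with $g^{n_j}(y_0)\to y$'' does not follow from density alone. Density of the forward orbit gives orbit points arbitrarily close to $y$, but a priori their indices could be bounded, and your appeal to (6) genuinely needs times tending to infinity, since the hypothesis there is a $\liminf_{n\to\infty}$. What you need is $\omega(y_0)=Y$, and this follows from irreducibility, not mere density: if $Y$ has no isolated points, then for every $N$ the finite set $\lbrace g^n(y_0) \; | \; n<N\rbrace$ has empty interior, so $\overline{\lbrace g^n(y_0) \; | \; n\ge N\rbrace}=Y$ for every $N$, whence $\omega(y_0)=Y$; if $Y$ has an isolated point, then non-wandering forces that point to be periodic, and density then forces $Y$ to be a single finite periodic orbit, where the claim is trivial. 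Adding that argument closes the gap and completes the proposition. (You are in good company: the paper's own use of part (6) inside the proof of Proposition \ref{clopenMP} likewise chooses $m_k$ with $f^{m_k}(x_0)\to x$ and silently needs $m_k\to\infty$ for the same reason.)
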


\begin{prop}\label{clopenMP}
Let $(X,f)$ be an irreducible Smale space such that $X^s(x,\epsilon)$ is totally disconnected for every $x\in X$ and $0< \epsilon \leq \epsilon_X$.  Then there exists a Markov partition, $\mathcal{P}$, for $(X,f)$ such that if $x\in R \in \mathcal{P}$, then $X^s(x,R)$ is clopen in $X^s(x,\epsilon_X)$.  
\end{prop}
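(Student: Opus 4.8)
The plan is to build the desired partition in two stages: first produce a finite cover of $X$ by rectangles that are already clopen in the stable direction, and then refine this cover to a genuine Markov partition while showing that the clopen-in-the-stable-direction property survives the refinement. Throughout, the properties of $s$-resolving factor maps recorded in Proposition \ref{unstab} are the main technical tool, used both to control how $f$ moves local stable sets and to supply the finiteness that keeps stable boundaries from accumulating.

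For the first stage, fix $x \in X$. Since $X^s(x,\epsilon_X)$ is compact and totally disconnected, $x$ has arbitrarily small neighborhoods in $X^s(x,\epsilon_X)$ that are clopen; choose such a clopen set $C_x$. Choose also a closed unstable piece $N^u_x \subseteq X^u(x,\epsilon_X)$ that is regular closed in the unstable direction (equal to the closure of its relative interior), and set
\[ R_x = \{\, [a,b] \;:\; a \in N^u_x,\; b \in C_x \,\}. \]
Because the bracket restricts, for small enough $\epsilon$, to a homeomorphism of $X^u(x,\epsilon)\times X^s(x,\epsilon)$ onto a neighborhood of $x$, the set $R_x$ is closed under the bracket, its interior is the bracket image of $\mathrm{Int}(N^u_x)\times C_x$, and $R_x=\overline{\mathrm{Int}(R_x)}$; thus $R_x$ is a rectangle. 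By construction $X^s(y,R_x)$ is a bracket-translate of the clopen set $C_x$, hence clopen in $X^s(y,\epsilon_X)$ for each $y \in R_x$. The interiors of the $R_x$ cover $X$, so I extract a finite subcover and, by shrinking the $C_x$ and $N^u_x$, assume every rectangle has diameter small enough that the bracket and the Markov refinement below are available.

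For the second stage, I would run the standard Sinai--Bowen refinement on this finite cover: cut the rectangles along the forward images of their stable boundaries and the backward images of their unstable boundaries, discard empty interiors, pass to closures of interiors, and thereby obtain a finite family with pairwise disjoint interiors satisfying the Markov property. The point to check is that each operation preserves the clopen-in-the-stable-direction property. Intersecting two rectangles intersects their stable slices, and a finite intersection of clopen subsets of $X^s(y,\epsilon_X)$ is clopen; taking the closure of the interior only adjusts the unstable factor and leaves the (already closed and open) stable factor unchanged. The remaining ingredient is that $f$ and $f^{-1}$ carry clopen stable slices to clopen stable slices, which is where Proposition \ref{unstab}, together with the fact that $f$ is a homeomorphism contracting stable sets, is applied.

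The main obstacle is exactly this boundary control in the stable direction. Refining to obtain the Markov property forces one to work with the stable boundaries and their $f$-images, and the danger is that the accumulated stable boundary fails to be relatively clopen, so that the resulting stable slices are no longer open. Total disconnectedness of $X^s(x,\epsilon_X)$ is what lets one separate these boundaries by clopen sets, while the finite-to-one and bracket-compatibility statements of Proposition \ref{unstab} (items (2) and (4)) supply the quantitative link between the scale $\epsilon_X$ defining the local stable sets and their clopen structure, ensuring that only finitely many boundary pieces are relevant and that each stays clopen after being moved by $f$. Making this simultaneous control of the Markov property and the clopen stable slices precise is the heart of the argument; the alternative, indicated in the text, is to graft the same total-disconnectedness bookkeeping directly onto Bowen's original construction.
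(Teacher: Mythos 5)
Your first stage is fine as far as it goes: bracketing a clopen stable piece $C_x$ against a regular-closed unstable piece $N^u_x$ does produce small rectangles whose stable slices are clopen, and a finite subcover exists. The genuine gap is in the second stage. The ``standard Sinai--Bowen refinement'' is not a black box that accepts an arbitrary finite cover by rectangles. Bowen's refinement is applied to a cover with a built-in dynamical compatibility: his rectangles $T_s$ arise as images of cylinder sets under the shadowing map from a shift of finite type of pseudo-orbits, and it is exactly this provenance that gives the pre-Markov property $f(X^s(x,T_{q_0}))\subseteq X^s(f(x),T_{q_1})$ on which the refinement and the verification of the Markov property rest. Your cover, built from arbitrary clopen stable pieces and arbitrary unstable pieces, carries no such compatibility with $f$, and ``cutting along the forward images of the stable boundaries'' of an arbitrary cover is an infinite process --- no finite amount of subdividing such a cover yields the Markov property. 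So the step ``thereby obtain a finite family \ldots satisfying the Markov property'' is precisely the part that fails, and you yourself flag that making it precise is ``the heart of the argument''; that heart is missing.

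A second, related problem: your stated main tool, Proposition \ref{unstab}, is vacuous in your framework. That proposition concerns an $s$-resolving factor map $\pi$ between two irreducible Smale spaces, and your construction never introduces any such map; items (2) and (4) are assertions about $\pi$ and have no content without it. This is in fact where the paper's proof diverges from yours: it invokes Putnam's lifting theorem (Corollary 1.3 of \cite{Putnam}) to obtain an irreducible shift of finite type $\Sigma$ and an $s$-resolving factor map $\pi:\Sigma\rightarrow X$, takes the cylinder-set Markov partition $\{R_w\}$ upstairs --- where everything is clopen and the Markov property is trivial --- and pushes it down, forming the partition from intersections $\pi(R_{w_1})\cap\cdots\cap\pi(R_{w_d})$ with $d=\deg(\pi)$, indexed by the configurations realized on the dense open set where $\#\{w : x\in\pi(R_w)\}=d$. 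Clopen-ness of the stable slices downstairs is then automatic because an $s$-resolving map is a homeomorphism on local stable sets, and the Markov property is inherited from $\Sigma$ via Proposition \ref{unstab} (1) and (5). In other words, the paper solves exactly the problem you identify (simultaneous control of the Markov property and the clopen stable structure) by lifting to a symbolic model rather than by refining a cover inside $X$; if you want to pursue your route, you would need to rebuild Bowen's pseudo-orbit construction from scratch and verify clopen-ness through each of his refinement steps, which is the ``necessary adjustments'' alternative the paper mentions but does not carry out.
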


\begin{proof}
By Corollary 1.3 of \cite{Putnam}, there exists an irreducible shift of finite type, $\Sigma$, and an  $s$-resolving factor map $\pi: \Sigma \rightarrow X$.  The metric on $\Sigma$ is the common one given by $d_\Sigma(\mathbf{s},\mathbf{t})=\sum_{n\in \mathbb{Z}} 2^{-|n|}\chi(s_n,t_n)$, where
\[ \chi(s_n, t_n)=\left\lbrace\begin{array}{ll}
0 & \mathrm{if} \; s_n=t_n\\
1 & \mathrm{if} \; s_n \neq t_n
\end{array}\right. .\]  And the homeomorphism on $\Sigma$ is the usual left shift map, which we denote by $S$.  

Let $\epsilon_\pi>0$ be as in Proposition \ref{unstab}.  Choose $N\in \mathbb{N}$ such that 
$$\sum_{|n|>N}2^{-n}<\epsilon_\pi.$$
Let $P_{2N+1}$ be the set of all paths of length $2N+1$ which appear in elements of $\Sigma$.

For $w\in P_{2N+1}$, let $R_w=\lbrace \mathbf{a}\in \Sigma \; | \; a_{-N}\cdots a_N =w \rbrace.$ Then $R_w$ is a clopen rectangle in $\Sigma$ with diameter less than $\epsilon_\pi$, and $\mathcal{P}=\lbrace R_w \; | \; w\in P_{2N+1}\rbrace$ is a Markov partition for $\Sigma$.  

Since each $R_w\in \mathcal{P}$ is compact in $\Sigma$, it follows that $\pi(R_w)$ is compact in $X$, and hence closed.
Moreover, since $\pi$ is $s$-resolving and each $R_w$ is clopen, it follows that each $\pi (R_w)$ is clopen in the stable direction.  Let's show that $[x,y]\in \pi(R_w)$ whenever $x,y\in \pi(R_w)$.  Suppose $x=\pi(\mathbf{a})$ and $y=\pi(\mathbf{b})$ for some $\mathbf{a},\mathbf{b}\in R_{w}$.  Since \linebreak diam$(R_{w})\leq \epsilon_\pi$, it follows from Proposition \ref{unstab} (4) that we must have \linebreak $[x,y]=[\pi(\mathbf{a}),\pi(\mathbf{b})]=\pi([\mathbf{a},\mathbf{b}])\in \pi(R_{w})$.

By Proposition \ref{unstab} (2), $\pi$  is finite-to-one; let $d=\mathrm{deg}(\pi)$.
We will show that a subset of 
\[  \lbrace \pi (R_{w_1})\cap \pi (R_{w_2}) \cap \cdots \cap \pi (R_{w_d}) \;| \;
 R_{w_1}, R_{w_2}, \cdots, R_{w_d} \in  \mathcal{P}\; \mathrm{distinct} \rbrace\]
is a Markov partition for $(X,f)$.  

Let us define a map $n:X \rightarrow \mathbb{N}$ by 
\[n(x)= \#\lbrace  R_w\in \mathcal{P}\;| \; x\in \pi(R_w)\rbrace.\]
Since the $R_w$ are disjoint, it follows that 
\begin{equation}\label{lbound}
n(x) \leq \#\pi^{-1}\lbrace x \rbrace
\end{equation}
for all $x\in X$.  

We have the following estimate of continuity of $n$.  Suppose we have  a convergent sequence $x_{k}$ with limit point $x$. Since each $x_{k}$ lies in $n(x_k)$ elements of the finite set $\lbrace \pi(R_w)\; | \; R_w\in \mathcal{P}\}$,  we may pass to a subsequence where every term is contained in the same $\pi(R_w)$'s. Since they are closed, $x$ also lies in these $\pi(R_w)$'s. Hence 
\begin{equation}\label{semicont}
n(x) \geq \limsup n(x_{k}).
\end{equation}

Let us show that $n(x)\geq d$ for all $x\in X$, and that equality holds if $x$ has a dense forward orbit.  Let $x$ be any point in $X$ and let $x_0\in X$ have a dense forward orbit (such a point exists since $(X,f)$ is irreducible).  By Theorem \ref{unstab} (3), $\#\pi^{-1}\lbrace x_0 \rbrace = d$; let $\pi^{-1}\{ x_0 \} = \{ \mathbf{a}_{1}, \mathbf{a}_{2}, \ldots, \mathbf{a}_d \}$.
Choose a sequence of positive integers so that $f^{m_{k}}(x_0)$ converges to $x$. 
Pass to a subsequence where $S^{m_{k}}(\mathbf{a}_{j})$ converges, for each $ 1 \leq j \leq d$.
If two of the limit points (for different values of $j$) are in the same rectangle, then they are within $\epsilon_{\pi}$ of each
other.  So by Theorem \ref{unstab} (6), these two $\mathbf{a}_{j}$'s are equal. Since this isn't the case, we see
that no two limit points of the sequences can be in the same rectangle, but they all clearly lie in  $\pi^{-1}\{ x \}$.
As a result, $n(x)\geq d$.  It follows from \eqref{lbound}  that $n(x_0)=d$.  

That is, $n^{-1}\{ d \}$ is non-empty and $n^{-1}\{ k \}$ is empty for $k < d$. From \eqref{semicont}
we also see that $n^{-1}\{ d+1, d+2, \ldots \}$ is closed and so $n^{-1}\{ d \}$ is open.  We claim it is also dense. 
But that follows from the fact that it contains all points with a dense forward orbit. One of them is enough, since each point in its forward orbit also has a dense forward orbit.

Let 
\begin{align*}
\mathcal{S}=\lbrace \lbrace R_{w_1}, R_{w_2}, \cdots, R_{w_d}\rbrace\subseteq \mathcal{P}\; | \; & \exists \; x\in n^{-1}\{ d \} \; \mathrm{with} \\ 
&x\in \pi(R_{w}) \Leftrightarrow w\in \lbrace w_1, w_2, \cdots, w_d\rbrace \rbrace.
\end{align*}
We will show that 
\[  \mathcal{R}=\lbrace \pi (R_{w_1})\cap \pi (R_{w_2}) \cap \cdots \cap \pi (R_{w_d}) \;| \;
 \lbrace R_{w_1}, R_{w_2}, \cdots, R_{w_d}\rbrace \in \mathcal{S} \rbrace\]
 is a Markov partition for $(X,f)$.  We already observed above that each $\pi(R_w)$ is clopen in the stable direction; it is clear that a finite intersection of these sets would have the same property.  
 
First we need to know that the elements of $\mathcal{R}$ are rectangles.   
That they have dense interiors follows from the fact that $n^{-1}\{ d \}$ is open and dense in $X$.  Moreover, we observed above that for any $R_w\in \mathcal{P}$, we have $[x,y]\in \pi(R_w)$ whenever $x,y\in \pi(R_w)$.  

That $\mathcal{R}$ covers $X$ and that the elements of $\mathcal{R}$ have disjoint interiors also follows from the fact that $n^{-1}\{ d \}$ is open and dense in $X$.

So it remains to prove that $\mathcal{R}$ satisfies the Markov property.  It suffices to prove this for the set of points in $X$ with dense forward orbits, since these points (and their orbits) are clearly contained in the interiors of elements of $\mathcal{R}$.  

Let $x\in \mathrm{Int}(\bigcap_{i=1}^d\pi(R_{w_i}))\cap f^{-1}(\mathrm{Int}(\bigcap_{i=1}^d\pi(R_{v_i})))$, where  $\bigcap_{i=1}^d\pi(R_{w_i})$ and \linebreak $\bigcap_{i=1}^d\pi(R_{v_i})$ are elements of $\mathcal{R}$.  Since $\pi^{-1}\{x\}=\{ \mathbf{a}_1, \cdots, \mathbf{a}_d \}$ and $n(x)=d$, it follows that for each $\mathbf{a}_k$ there are $1\leq i,j \leq d$ such that $\mathbf{a}_i\in  R_{w_i} \cap S^{-1}(R_{v_j})$.  Therefore 
$S(\Sigma^s(\mathbf{a}_k, R_{w_i}))\subseteq \Sigma^s(S(\mathbf{a}_k),  R_{v_j})$ and $S^{-1}(\Sigma^u(S(\mathbf{a}_k),  R_{v_j}))\subseteq \Sigma^u(\mathbf{a}_k, R_{w_i})$.  By Proposition \ref{unstab} (1), $\pi$ is a homeomorphism on the local stable sets, so that 
\[f(X^s(x, \pi(R_{w_i})))=f(\pi(\Sigma^s(\mathbf{a}_k, R_{w_i}))) =\pi( S(\Sigma^s(\mathbf{a}_k, R_{w_i})))\subseteq \pi( R_{v_j}).\]
And by Proposition \ref{unstab} (5), we also have 
\begin{align*}
f^{-1}(X^u(f(x), \pi(R_{w_j}))) & \subseteq f^{-1}(\pi(\Sigma^u(S(\mathbf{a}_k), R_{w_j}))) \\
& =\pi( S^{-1}(\Sigma^u(S(\mathbf{a}_k), R_{w_j})))\\
& \subseteq \pi( R_{v_i}).
\end{align*}
Since $f(X^s(x, \epsilon_X))\subseteq X^s(f(x), \epsilon_X)$ and $f^{-1}(X^u(f(x), \epsilon_X))\subseteq X^u(x, \epsilon_X)$ hold trivially, we are done.  
\end{proof}

Let $\mathcal{P}=\lbrace R_1, \cdots, R_M\rbrace$ be a Markov partition for $(X,f)$ as in Proposition \ref{clopenMP}; that is, for every $x\in R_i\in  \mathcal{P}$,  $X^s(x, R_i)$ is clopen in $X^s(x,\epsilon_X)$.  


We define a relation $\thickapprox$ on $X$ as follows:
\begin{center}
$x \thickapprox y$ if and only if $x,y\in R_i$ for some $R_i\in\mathcal{P}$ and $x\in X^s(y,\epsilon_X)$.  
\end{center}
We observe that this relation is reflexive and symmetric, but not transitive.  Let $\sim$ be the transitive closure of $\thickapprox$, with equivalence classes denoted $[[\cdot ]]$.  That is, $x\sim y$ if and only if there are $x_1, x_2, \cdots, x_n\in X$ such that $x \thickapprox x_1 \thickapprox \cdots \thickapprox x_n \thickapprox y$.  

We observe that $[[x]]\subseteq X^s(x)\equiv  \lbrace y\in X \; | \; \lim_{n\rightarrow \infty}d(f^n(x),f^n(y))=0\rbrace$.  Moreover, if $x \in \mathrm{Int}R_i$ then $x\thickapprox X^s(x,R_i) \subseteq \mathrm{Int}R_i$, and $\mathrm{Int}R_i\cap R_j=\emptyset$ if $i\neq j$, so that $[[x]]=X^s(x,R_i)$.  
That is, the equivalence classes, $[[\cdot]]$, are larger on the boundaries of the Markov partition rectangles than they are on the interiors; by ``larger" we mean that they intersect more rectangles.  As a result we have the following intuitive sense of lower semi-continuity on the local unstable sets:  for a sequence $(x_n)$ converging to $x$, $[[x]]$ can be larger than $[[x_n]]$ but not smaller.  

To define our metric on $X/_\sim$, we will enlarge the equivalence classes $[[\cdot]]$ near the boundaries of the Markov partition rectangles, and then define paths using these enlarged classes.  The distance between $[[x]]$ and $[[y]]$ will be defined to be the length of the shortest path between $[[x]]$ and $[[y]]$. 
A distinctive feature of our paths is that they are concatenations of very short moves within local stable or unstable sets, where the moves in the stable sets do not contribute to the length of the path.  
A variation of our metric appears in \cite{Putnam}.

The bracket map is the natural candidate for a tool to enlarge the $[[\cdot]]$; however this map is defined only on small balls.  So we develop a new tool, $\langle \cdot, \cdot \rangle$, for this purpose.    

It is a well-known fact that if a metric space $A$ is compact and $C \subseteq A$ is a clopen subset, then there exists $\epsilon > 0$ such that $B(C, \epsilon) \subseteq C$.  Since we know that the rectangles are clopen in the stable direction, we would like a uniform constant satisfying this property.  

\begin{lemma}\label{epsilon_0}
There exists $0<\epsilon_0\leq \frac{1}{3}\epsilon_X$ such that if $x\in R_i\in\mathcal{P}$ then 
$$X^s(x,\epsilon_0)\subseteq R_i.$$
\end{lemma}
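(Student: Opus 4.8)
The plan is to argue by contradiction, extract a failing sequence, and then use compactness of $X$ together with continuity of the bracket to reduce everything to the clopen property of a \emph{single} rectangle at a \emph{single} limit point. First I would note that it suffices to produce \emph{some} $\epsilon_0>0$ with the stated containment: any smaller positive constant works equally well (if $X^s(x,\epsilon_0)\subseteq R_i$ and $\epsilon_0'\le\epsilon_0$ then $X^s(x,\epsilon_0')\subseteq X^s(x,\epsilon_0)\subseteq R_i$), so at the end I may replace $\epsilon_0$ by $\min(\epsilon_0,\tfrac13\epsilon_X)$ and forget the bound $\tfrac13\epsilon_X$ in the meantime. So suppose no positive constant works. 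Then for each $m$ there are an index $i_m$, a point $x_m\in R_{i_m}$, and a point $y_m\in X^s(x_m,1/m)$ with $y_m\notin R_{i_m}$. Since $\mathcal{P}$ is finite I would pass to a subsequence along which $i_m\equiv i$ is constant, and by compactness of $X$ to a further subsequence along which $x_m\to x$ and $y_m\to y$. Since $d(x_m,y_m)\le 1/m\to 0$ we get $y=x$, and since $R_i$ is closed, $x\in R_i$.

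The key difficulty is that $y_m$ lies in the stable set of $x_m$, not of the limit $x$; because forward iterates of $x_m$ and $x$ may separate, $y_m$ need not belong to any local stable set of $x$, so the clopen property of $X^s(x,R_i)$ inside $X^s(x,\epsilon_X)$ cannot be applied to $y_m$ directly. To correct the unstable discrepancy I would set $z_m=[x,y_m]$, which is defined for large $m$ since $y_m\to x$. By continuity of the bracket $z_m\to[x,x]=x$, and $z_m\in X^s(x,\epsilon_X')\subseteq X^s(x,\epsilon_X)$ (shrinking $\epsilon_X'$ if necessary so that $\epsilon_X'\le\epsilon_X$, which preserves the clopen property on the smaller stable set). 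Thus $(z_m)$ is a sequence in $X^s(x,\epsilon_X)$ converging to $x$. Since $x\in X^s(x,R_i)$ and the latter is open in $X^s(x,\epsilon_X)$, once I know $z_m\notin R_i$ for infinitely many $m$ the convergence $z_m\to x$ will place $z_m$ in $R_i$ eventually, the desired contradiction.

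The main step, therefore, is to show $z_m\notin R_i$, which I expect to be the only genuine obstacle; I would prove it using the rectangle property. Assume toward a contradiction that $z_m\in R_i$. I would check that $y_m=[x_m,z_m]$: indeed $y_m\in X^s(x_m,\epsilon_X')$ since $1/m\le\epsilon_X'$ eventually, while $z_m=[x,y_m]\in X^u(y_m,\epsilon_X')$ gives $y_m\in X^u(z_m,\epsilon_X')$; as $d(x_m,z_m)\le\epsilon_X$ for large $m$, uniqueness of the bracket forces $y_m=[x_m,z_m]$. But then $x_m,z_m\in R_i$ and the rectangle property yield $y_m=[x_m,z_m]\in R_i$, contradicting $y_m\notin R_i$. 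Hence $z_m\notin R_i$, which completes the argument; the remaining reductions are routine compactness, and capping the resulting constant at $\tfrac13\epsilon_X$ gives the stated $\epsilon_0$.
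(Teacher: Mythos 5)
Your proof is correct, but it follows a genuinely different route from the paper's. You argue by contradiction and compactness: extract a failing sequence $(x_m,y_m)$ with $y_m\in X^s(x_m,1/m)\setminus R_{i_m}$, pass to subsequences so that $i_m\equiv i$ and $x_m,y_m\to x\in R_i$, and then use the bracket $z_m=[x,y_m]$ to transfer the offending points into the single stable set $X^s(x,\epsilon_X)$, where openness of $X^s(x,R_i)$ at the one limit point $x$ collides with $z_m\notin R_i$ (which you correctly derive from the rectangle property via the identification $y_m=[x_m,z_m]$, using symmetry of the local unstable sets and uniqueness of the bracket). The paper instead gives a direct, per-rectangle construction: it fixes a base point $x_i\in R_i$, uses clopen-ness of the compact slice $X^s(x_i,R_i)$ to get a collar constant $\epsilon_i$ with $B(X^s(x_i,R_i),\epsilon_i)\cap X^s(x_i,\epsilon_X)\subseteq X^s(x_i,R_i)$, covers the slice by finitely many stable balls, and invokes uniform continuity of $[\cdot,\cdot]$ to produce one $\eta_i$ valid for \emph{every} $x\in R_i$ (writing $x=[u,s]$ with $s$ in the slice and sliding $y\in X^s(x,\eta_i)$ over to $[y_j,y]$ in the slice through $x_i$); then $\epsilon_0=\min_i\eta_i$. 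Your argument is shorter and softer: it needs the clopen hypothesis only at a single limit point and never quantifies anything, at the price of being purely non-constructive; the paper's proof is longer but yields explicit constants and makes visible how the uniformity over $x\in R_i$ is achieved. Both hinge on the same two structural facts — stable slices are clopen and rectangles are bracket-closed. One small repair to yours: the parenthetical about ``shrinking $\epsilon_X'$ so that $\epsilon_X'\le\epsilon_X$'' is both unnecessary and slightly dangerous, since shrinking $\epsilon_X'$ alone could empty the intersection in condition (3) of Definition \ref{Smale}; it is cleaner to note that $z_m\in X^s(x,\epsilon_X')$ and $z_m\to x$ already give, by the contraction in condition (1), $d(f^n(z_m),f^n(x))\le\lambda^n d(z_m,x)\le d(z_m,x)$ for all $n\ge 0$, so $z_m\in X^s(x,\epsilon_X)$ automatically once $d(z_m,x)\le\epsilon_X$.
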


\begin{proof}
Choose $x_i\in R_i\in\mathcal{P}$.  Since $X^s(x_i, R_i)$ is clopen in the compact set $X^s(x_i,\epsilon_X)$, there exists $0<\epsilon_i\leq \frac{1}{3}\epsilon_X$ such that 
\begin{equation}\label{compactball}
B(X^s(x_i, R_i),\epsilon_i)\cap X^s(x_i, \epsilon_X)\subseteq X^s(x_i, R_i)
\end{equation}
The collection $\lbrace X^s(y,\frac{1}{2}\epsilon_i) \; | \; y\in X^s(x_i, R_i) \rbrace$ covers $X^s(x_i, R_i)$, so there is a finite subcover, with centers $y_1, \cdots, y_n$.  

By the uniform continuity of $[ \cdot, \cdot ]$, there exists $0< \eta_i \leq \frac{1}{2}\epsilon_i$ such that if $d(a, b)\leq \nolinebreak \eta_i$ then $d([c,a],[c,b])\leq \frac{1}{2}\epsilon_i$ for any $c\in B(a,\epsilon_X)\cap B(b,\epsilon_X)$.  We will show that 
$$X^s(x,\eta_i)\subseteq R_i$$
for any $x\in R_i$.  

Choose $x\in R_i$ and $y\in X^s(x,\eta_i)$.  Since 
$R_i=[X^u(x_i, R_i),X^s(x_i, R_i)]$, it follows that 
$x=[u,s]$ for some $u\in X^u(x_i, R_i)$ and $s\in X^s(x_i, R_i)$.  And $s\in X^s(y_j,\frac{1}{2}\epsilon_i)$ for some $1\leq j \leq n$.  Moreover, since $d(x,y)\leq\eta_i$, it follows that we have \linebreak $d(s,[y_j,y])=d([y_j,x],[y_j,y])\leq \frac{1}{2}\epsilon_i.$
Hence
\begin{align*}
d([y_j,y],y_j)& \leq d([y_j,y],s) + d(s,y_j)\\
& \leq \tfrac{1}{2} \epsilon_i+ \tfrac{1}{2} \epsilon_i\\
& = \epsilon_i, 
\end{align*}
so that $[y_j,y]\in X^s(y_j,\epsilon_i)$.  Moreover, since $y_j\in X^s(x_i, R_i)\subseteq X^s(x_i, \frac{1}{2}\epsilon_X)$, it follows that 
$X^s(y_j,\epsilon_i)\subseteq X^s(y_j, \frac{1}{3}\epsilon_X)\subseteq X^s(x_i,\epsilon_X)$. That is, 
$$[y_j,y]\in B(y_j,\epsilon_i)\cap X^s(x_i, \epsilon_X)\subseteq B(X^s(x_i, R_i),\epsilon_i)\cap X^s(x_i, \epsilon_X).$$  It follows from \eqref{compactball} that $[y_j,y]\in R_i$.  By the definition of a rectangle, $x, [y_j,y]\in R_i$ implies
$$y=[x,y] = [x,[y_j,y]] \in R_i.$$

Let $\epsilon_0 = \mathrm{min} \lbrace \eta_i \; | \; i=1, \cdots, M \rbrace$.
\end{proof}

Next, we find a bound on the transitive closure, $\sim$, of the relation $\thickapprox$.  

\begin{lemma}\label{N}
There exists $N\in \mathbb{N}$ such that if $y\sim x$ then there are $y_1,\cdots, y_N$ with 
$$y\thickapprox y_1 \thickapprox \cdots \thickapprox y_N \thickapprox x.$$
\end{lemma}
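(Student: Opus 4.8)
The plan is to prove that a chain of \emph{minimal} length joining two $\sim$-equivalent points can never use the same rectangle of $\mathcal{P}$ twice. Since $\mathcal{P}=\{R_1,\dots,R_M\}$ has only $M$ elements, this bounds the number of $\thickapprox$-steps by $M$, and a single $N$ valid for all pairs is then obtained by padding shorter chains with the trivial steps $x\thickapprox x$ permitted by the reflexivity of $\thickapprox$.

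The heart of the argument is the following shortcut principle, which I would isolate as a sub-lemma: \emph{if $a$ and $b$ lie in a common rectangle $R_i\in\mathcal{P}$ and $a\in X^s(b)$, then $a\thickapprox b$.} Granting this, suppose $y\sim x$ and fix a chain $y=z_0\thickapprox z_1\thickapprox\cdots\thickapprox z_m=x$ of minimal length $m$, together with a choice of rectangle $R_{i_k}$ containing $z_k$ and $z_{k+1}$ for each $k=0,\dots,m-1$. Every $z_k$ lies in the single class $[[y]]\subseteq X^s(y)$, so any two of the $z_k$ are globally stably equivalent. If some rectangle were repeated, say $i_j=i_k$ with $j<k$, then $z_j$ and $z_{k+1}$ would both lie in $R_{i_j}$ and be globally stably equivalent, so the shortcut principle would give $z_j\thickapprox z_{k+1}$. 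Replacing the subchain $z_j\thickapprox\cdots\thickapprox z_{k+1}$, which has $k-j+1\ge 2$ steps, by this single step yields a strictly shorter chain, contradicting minimality. Hence $i_0,\dots,i_{m-1}$ are distinct and $m\le M$.

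To establish the shortcut principle I would exploit the product structure of a rectangle together with the expanding behaviour of Definition \ref{Smale}(2). Since $a,b\in R_i$ and $\mathrm{diam}(R_i)\le\epsilon_X$, the bracket $[a,b]\in X^s(a,\epsilon_X)\cap X^u(b,\epsilon_X)$ is defined and lies in $R_i$; the goal is to show $a$ and $b$ share the same unstable coordinate inside $R_i$, so that $a\in X^s(b,R_i)\subseteq X^s(b,\epsilon_X)$ and therefore $a\thickapprox b$. This is precisely where the hypothesis $a\in X^s(b)$ is used: two points of $R_i$ with distinct unstable coordinates are separated in the unstable direction, and because $f$ expands local unstable sets their forward orbits cannot converge, contradicting global stable equivalence; expansiveness of the Smale space makes this rigorous. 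I expect this sub-lemma to be the main obstacle, since it is the only place where the fine dynamical structure (rather than pure counting against $\mathcal{P}$) is needed, and some care with the constants $\epsilon_X,\epsilon_X',\lambda$ will be required to control the intermediate iterates.

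Finally, to deliver a uniform $N$ I would set $N=M$: given the minimal chain above with $m\le M$ steps and interior points $z_1,\dots,z_{m-1}$, define $y_\ell=z_\ell$ for $\ell<m$ and $y_\ell=x$ for $m\le\ell\le N$. Reflexivity of $\thickapprox$ makes each appended relation $x\thickapprox x$ valid, so that $y\thickapprox y_1\thickapprox\cdots\thickapprox y_N\thickapprox x$, as required.
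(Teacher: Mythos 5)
Your reduction to the ``shortcut principle'' is exactly where the proof breaks down: that principle is false as stated. Take the full $2$-shift $\Sigma_{\{0,1\}}$ with the usual metric $d(\mathbf{x},\mathbf{y})=2^{-\min\{|n|\,:\,x_n\neq y_n\}}$ and the Markov partition $R_j=\{\mathbf{x}\,:\,x_0=j\}$, $j=0,1$. Let $\mathbf{b}$ be the all-zero sequence and $\mathbf{a}$ the sequence with $a_1=a_2=1$ and $a_i=0$ otherwise. Then $\mathbf{a},\mathbf{b}\in R_0$ and $\mathbf{a}\in X^s(\mathbf{b})$ (they agree in all coordinates $\geq 3$), but $d(f(\mathbf{a}),f(\mathbf{b}))=1$, so $\mathbf{a}\notin X^s(\mathbf{b},\epsilon_X)$ (for the shift one has $\epsilon_X\leq 1/2$) and hence $\mathbf{a}\not\thickapprox\mathbf{b}$. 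Note moreover that $[\mathbf{a},\mathbf{b}]=\mathbf{a}$: the two points lie on a common local unstable set, with distinct ``unstable coordinates,'' and yet their forward orbits converge. This pinpoints the flaw in your proposed proof of the sub-lemma: expansion along unstable sets constrains forward orbits only while successive iterates remain within the local scale $\epsilon_X'$; once two forward orbits separate beyond that scale, nothing in the Smale space axioms prevents them from re-converging, and global stable equivalence only controls the tail of the forward orbit, not the finitely many iterates before the orbits become close.

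You might hope to salvage the argument by proving only the weaker statement you actually use, namely that $z_j\sim z_{k+1}$ (not merely $z_j\in X^s(z_{k+1})$) together with common rectangle membership forces $z_j\thickapprox z_{k+1}$. But your sketch never uses the chain, only global stable equivalence, so it does not prove this either; and it is far from clear that it is true, since a chain of $m$ links only yields $d(f^n(z_j),f^n(z_{k+1}))\leq m\lambda^n\epsilon_X$, which can exceed $\epsilon_X$ for small $n$. This is precisely the difficulty the paper's proof is engineered around: it first applies $f^m$ (with $m$ chosen so that $\lambda^m\epsilon_X\leq\epsilon_0$) to contract every link, and then pigeonholes first over the $M$ rectangles and then over a finite cover by $\tfrac{1}{2}\eta$-balls, so that the two points to be joined by a shortcut are not only in a common rectangle but also metrically close after applying $f^m$; the uniform continuity built into the choice of $\eta$ (namely $d(x,y)<\eta$ implies $d(f^{-k}(x),f^{-k}(y))<\epsilon_X$ for $k=0,\dots,m$) then controls the initial iterates and gives $y_j\in X^s(y_{j'},\epsilon_X)$, hence $y_j\thickapprox y_{j'}$. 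That extra metric pigeonhole is why the paper's bound is $N=2nM-2$ rather than your $M$; a bound depending only on the number of rectangles would require the shortcut principle, which is unproven in the form you need and false in the form you stated.
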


\begin{proof}
Let $\epsilon_0>0$ be as in Lemma \ref{epsilon_0} and choose $m\in \mathbb{N}$ such that 
$$\lambda^m \epsilon_X \leq\epsilon_0.$$
Then choose $\eta>0$ such that $d(x,y)< \eta$ implies $d(f^{-k}(x), f^{-k}(y))<\epsilon_X$ for all $k=0, \cdots, m$.  
Cover $X$ by $\frac{1}{2}\eta$-balls and extract a finite subcover $\lbrace B_1, \cdots, B_n\rbrace$.  

We claim that $N=2nM-2$ satisfies the conclusion.  

Let $x,y\in X$ with $y\sim x$.  By definition of $\sim$, we know that $y\thickapprox y_1 \thickapprox \cdots \thickapprox y_L \thickapprox x$ for some $y_1, \cdots, y_L\in X$.  Denote $y_0=y$ and $y_{L+1}=x$.  Suppose $L>2nM-2$.  We will show that $y_j\thickapprox y_{j'}$ for some non-consecutive $j$ and $j'$.  

Since $P$ covers $X$, $f^m(y_0)\in R_i$ for some (not necessarily unique) $1\leq i \leq M$.  By definition of $\thickapprox$, we have $y_{j+1}\in X^s(y_j, \epsilon_X)$ for all $j=0, \cdots, L$, so that 
$$f^m(y_{j+1})\in X^s(f^m(y_j), \lambda^m\epsilon_X)\subseteq X^s(f^m(y_j), \epsilon_0).$$
Arguing inductively we see that   
\begin{equation}\label{R_i}
f^m(y_j)\in X^s(f^m(y_0), \epsilon_X) \cap R_i
\end{equation}
for all $j=1, \cdots, L +1$.

Since $L+2 \geq 2nM +1$, at least $2n+1$ of the $y_j$ are in the same $R_{i'}$ for some $1\leq i'\leq M$.  Look at those and apply $f^m$ to all of them.  Since there are at least $2n+1$ of these, at least $3$ of them are in the same $B_r$ for some $1\leq r \leq n$.  Of these 3, choose 2 that have non-consecutive indices.  That is, there are $y_j$ and $y_{j'}$ with $j$ and $j'$ not consecutive, such that $y_j, y_{j'}\in R_{i'}$ and $f^m(y_j),f^m(y_{j'})\in B_r$.  So
$$d( f^m(y_j),f^m(y_{j'}))<\eta,$$
and we have from \eqref{R_i} that $f^m(y_j), f^m(y_{j'})\in X^s(f^m(y_0), \epsilon_X)$, hence
$$f^m(y_j) \in X^s(f^m(y_{j'}),\eta).$$
It follows from our choice of $\eta>0$ that 
$$y_j\in X^s(y_{j'}, \epsilon_X).$$
Therefore $y_j\thickapprox y_{j'}$.  
\end{proof}

Choose $0< \epsilon_0''\leq \epsilon_0$ such that $[x,y]\in X^s(x,\epsilon_0)\cap X^u(y,\epsilon_0)$ if $d(x,y)\leq \epsilon_0''$ (the existence of such a constant follows from the continuity of $[\cdot, \cdot]$).

Let $\epsilon_0'=\mathrm{min}\lbrace\frac{1}{3}\epsilon_X', \frac{1}{3}\epsilon_0''\rbrace$.

We noted that $[[x]]\subseteq X^s(x)$.  It follows that for each $y\in [[x]]$ there exists $n\geq 0$ such that $f^n(y)\in X^s(f^n(x), \epsilon_0')$.  We now show that we can do this in uniform time.  

\begin{cor}\label{K}
There exists $K\in \mathbb{N}$ such that if $y\in [[x]]$ then $f^K(y)\in X^s(f^K(x), \epsilon_0')$. 
\end{cor}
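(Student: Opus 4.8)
The plan is to combine the uniform bound on chain length from Lemma~\ref{N} with the exponential contraction on local stable sets guaranteed by Definition~\ref{Smale}(1), spending the available error $\epsilon_0'$ evenly across the (boundedly many) links of the $\thickapprox$-chain realizing $y\sim x$.

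First I would fix the constant $N$ from Lemma~\ref{N} and choose $K\in\mathbb{N}$ so large that $\lambda^{K}\epsilon_X\le\tfrac{1}{N+1}\epsilon_0'$. Given $y\in[[x]]$ we have $y\sim x$, so Lemma~\ref{N} produces points $y_1,\dots,y_N$ with $y\thickapprox y_1\thickapprox\cdots\thickapprox y_N\thickapprox x$; write $y_0=y$ and $y_{N+1}=x$, so that there are exactly $N+1$ links $y_j\thickapprox y_{j+1}$. By the definition of $\thickapprox$, each link gives $y_j\in X^s(y_{j+1},\epsilon_X)$ (a relation symmetric in $y_j,y_{j+1}$), and in particular $d(y_j,y_{j+1})\le\epsilon_X$.

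Next I would contract each link individually. Since $X^s(y_{j+1},\epsilon_X)\subseteq X^s(y_{j+1},\epsilon_X')$ --- the same inclusion already used in the proof of Lemma~\ref{N} --- both $y_j$ and $y_{j+1}$ lie in $X^s(y_{j+1},\epsilon_X')$, so Definition~\ref{Smale}(1) applies with base point $y_{j+1}$ and yields
\[ d(f^n(y_j),f^n(y_{j+1}))\le\lambda^n d(y_j,y_{j+1})\le\lambda^n\epsilon_X \qquad(n\ge0). \]
Evaluating at time $K+n$ and using $\lambda<1$ gives $d(f^{K+n}(y_j),f^{K+n}(y_{j+1}))\le\lambda^{K}\epsilon_X\le\tfrac{1}{N+1}\epsilon_0'$ for every $n\ge0$. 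Summing the triangle inequality along the chain then produces, for every $n\ge0$,
\[ d(f^{K+n}(y),f^{K+n}(x))\le\sum_{j=0}^{N}d(f^{K+n}(y_j),f^{K+n}(y_{j+1}))\le(N+1)\cdot\tfrac{1}{N+1}\epsilon_0'=\epsilon_0', \]
which is exactly the assertion $f^{K}(y)\in X^s(f^{K}(x),\epsilon_0')$.

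The one point requiring care --- and the reason the argument is phrased through contraction rather than mere closeness --- is that membership in $X^s(f^{K}(x),\epsilon_0')$ is a statement about \emph{all} forward times $K+n$ simultaneously, not just about the single time $K$. This is handled automatically by the per-link estimate $\lambda^n\epsilon_X$, which is valid for all $n$ at once; the only quantitative input is the budgeting of the target radius $\epsilon_0'$ into $N+1$ equal pieces, matching the uniform link count from Lemma~\ref{N}. The implicit convention $\epsilon_X\le\epsilon_X'$ (so that Definition~\ref{Smale}(1) may be invoked at scale $\epsilon_X$) is the same one already in force in the proof of Lemma~\ref{N}.
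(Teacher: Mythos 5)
Your proof is correct and follows essentially the same route as the paper's: both invoke Lemma~\ref{N} for the uniform chain length, choose $K$ with $(N+1)\lambda^K\epsilon_X\le\epsilon_0'$, contract each link via Definition~\ref{Smale}(1), and chain the estimates together (the paper phrases the chaining as nested inclusions of stable sets, which is just your triangle-inequality sum at all forward times). The implicit convention $\epsilon_X\le\epsilon_X'$ that you flag is likewise used silently in the paper's argument, so there is no substantive difference.
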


\begin{proof}  Suppose $y\sim x$.  Then by Lemma \ref{N}, there exist $y_1,\cdots, y_N$ with 
$$y\thickapprox y_1 \thickapprox \cdots \thickapprox y_N \thickapprox x.$$
Denote $y_0=y$ and $y_{N+1}=x$.  We have
$$y_i\in X^s(y_{i+1}, \epsilon_X)$$
for all $i=0, \cdots, N$.  Choose $K\in \mathbb{N}$ such that
$$\lambda^K (N+1)\epsilon_X\leq \epsilon_0'.$$
We have 
$$f^K(y_i)\in X^s(f^K(y_{i+1}), \lambda^K\epsilon_X)$$
for all $i=0, \cdots, N$, 
so that
\begin{align*}
f^K(y_0) & \in X^s(f^K(y_1), \lambda^K\epsilon_X) \\
&\subseteq X^s(f^K(y_2), 2\lambda^K\epsilon_X) \\
& \qquad \qquad \qquad \vdots \\
& \subseteq X^s(f^K(y_{N+1}), (N+1)\lambda^K\epsilon_X)\\
& \subseteq X^s(f^K(y_{N+1}), \epsilon_0').
\end{align*}
That is, $f^K(y)\in X^s(f^K(x), \epsilon_0')$.
\end{proof}

It follows quite easily from Corollary \ref{K} that the equivalence classes $[[\cdot]]$ are closed.  

Now, choose 
\begin{equation}\label{eX''}
0< \epsilon_X'' \leq \epsilon_0
\end{equation}
such that $d(x,y)\leq\epsilon_X''$ implies that $d(f^k(x), f^k(y))\leq \epsilon_X$ for each $k=0, \cdots, K$.  Then choose 
\begin{equation}\label{e1}
0<\epsilon_1\leq \epsilon_0
\end{equation}
such that $d(x,y)\leq\epsilon_1$ implies that $d(f^k(x), f^k(y))\leq \epsilon_0'$ for each $k=-K, \cdots, K$, and that $[x,y] \in X^s(x, \epsilon_X'')\cap X^u(y, \epsilon_X'')$ (a constant satisfying the second property exists by the continuity of $[\cdot,\cdot ]$).  

Suppose $d([[y]],[[x]])=\mathrm{inf}\lbrace d(y',x') \; \vert \; y'\in [[y]], \; x'\in [[x]]\rbrace \leq \epsilon_1$.  Then there exist $u\in [[y]]$ and  $v\in [[x]]$ with $d(u,v)\leq \epsilon_1$.  Let $z\in [[y]]$ and $x'\in [[x]]$.  We observe that 
\begin{align}
d(f^K(z),f^K(x')) &\leq d(f^K(z),f^K(u)) + d(f^K(u),f^K(v)) + d(f^K(v),f^K(x')) \notag \\
&\leq 3\epsilon_0' \label{smallenough}\\
& < \epsilon_X, \notag
\end{align}
so that $[f^K(z),f^K(x')]$ is defined. Moreover, since $d(f^K(z),f^K(x'))\leq \epsilon_X'$, it follows that  
\begin{align*}
[f^K(z),f^K(x')] & \in X^s\left(f^K(z), \tfrac{1}{3}\epsilon_X\right)\cap X^u(f^K(x'), \epsilon_X)\\
& \subseteq X^s(f^K(y), \epsilon_X)\cap X^u(f^K(x'), \epsilon_X)\\
& =\lbrace [ f^K(y),f^K(x') ] \rbrace;
\end{align*}
that is, $[f^K(z),f^K(x')]=[f^K(y),f^K(x')]$.

Define 
$$\langle y, [[x]] \rangle = \lbrace f^{-K}[f^K(y),f^K(x')] \; \vert \; x'\in [[x]] \rbrace .$$
We just showed that $\langle y, [[x]] \rangle= \langle z, [[x]] \rangle$.  And we observe that 
\begin{align*}
\langle x, [[x]] \rangle & = \lbrace f^{-K}[f^K(x),f^K(x')] \; \vert \; x'\in [[x]] \rbrace \\
&= \lbrace f^{-K}[f^K(x'),f^K(x')] \; \vert \; x'\in [[x]] \rbrace\\
&= \lbrace x' \; \vert \; x'\in [[x]] \rbrace\\
&=[[x]].
\end{align*}

By the uniform continuity of $f$, $f^{-1}$, and $[\cdot, \cdot]$, for each $0<\epsilon\leq \epsilon_1$ there exists 
$$0<\beta(\epsilon)\leq  \epsilon$$ 
such that $d_2((a,b),(c,d))\leq \beta(\epsilon)$ and $(f^K(a),f^K(b)), (f^K(c),f^K(d))\in \mathrm{Domain}[\cdot, \cdot]$ implies
$$d(f^{-K}[f^K(a),f^K(b)],f^{-K}[f^K(c),f^K(d)])\leq \epsilon.$$ 

\begin{lemma}\label{epsilonx}  
For each $x\in X$, there exists $0< \epsilon_{[[x]]}\leq \beta(\epsilon_1)$ such that \linebreak $d(y,[[x]])\leq \epsilon_{[[x]]}$ implies $[[y]]\subseteq \langle y, [[x]] \rangle$.
\end{lemma}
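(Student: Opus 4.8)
The plan is to fix, using compactness of the (closed) class $[[x]]$, a point $v\in[[x]]$ with $d(y,v)\le\epsilon_{[[x]]}$, and then to produce, for each $w\in[[y]]$, an explicit witness $x'_w\in[[x]]$ realizing $w$ as $f^{-K}[f^K(y),f^K(x'_w)]$. The natural candidate is the ``bracket--projection onto the stable set of $v$ at time $K$'',
\[
x'_w \;=\; \sigma(w)\;:=\;f^{-K}[f^K(v),f^K(w)].
\]
The containment $w\in\langle y,[[x]]\rangle$ is then the easy half: since $w\in[[y]]$, Corollary \ref{K} gives $f^K(w)\in X^s(f^K(y),\epsilon_0')$, and the bracket identity $[a,[b,c]]=[a,c]$ yields $[f^K(y),f^K(x'_w)]=[f^K(y),[f^K(v),f^K(w)]]=[f^K(y),f^K(w)]=f^K(w)$, so that $f^{-K}[f^K(y),f^K(x'_w)]=w$. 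One must only check the relevant brackets are defined, which holds because $d(f^K(v),f^K(w))\le d(f^K(v),f^K(y))+d(f^K(y),f^K(w))\le 2\epsilon_0'<\epsilon_X$ once $\epsilon_{[[x]]}\le\epsilon_1$ (using \eqref{e1}).

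The substance of the proof is to show $x'_w=\sigma(w)\in[[x]]$. I would invoke Lemma \ref{N} to write a $\thickapprox$-chain $y=y_0\thickapprox y_1\thickapprox\cdots\thickapprox y_{N+1}=w$ of uniformly bounded length, and then argue that $\sigma$ transports it to a $\thickapprox$-chain
\[
v \;=\;\sigma(v)\;\thickapprox\;\sigma(y_0)\;\thickapprox\;\cdots\;\thickapprox\;\sigma(y_{N+1})\;=\;\sigma(w)\;=\;x'_w ,
\]
which forces $x'_w\sim v$ and hence $x'_w\in[[v]]=[[x]]$. Here the decisive structural tool is that the rectangles are clopen in the stable direction: by Lemma \ref{epsilon_0}, once two points are stably within $\epsilon_0$, any rectangle containing one contains the other, so a $\thickapprox$-link reduces to a single stable estimate, with the common-rectangle condition coming for free.

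That stable estimate is where the contraction does the work. For each link, $f^K(\sigma(y_i))=[f^K(v),f^K(y_i)]$ and $f^K(\sigma(y_{i+1}))=[f^K(v),f^K(y_{i+1}))]$ both lie in the single local stable set $X^s(f^K(v),\cdot)$, so they are automatically stably related; moreover $y_i\in X^s(y_{i+1},\epsilon_X)$ gives $f^K(y_i)\in X^s(f^K(y_{i+1}),\lambda^K\epsilon_X)$, and the choice of $K$ in Corollary \ref{K} (with its $(N+1)$ slack) makes $\lambda^K\epsilon_X$ so small that the two projected points are stably within $\epsilon_0$ at time $0$, whence $\sigma(y_i)\thickapprox\sigma(y_{i+1})$ by Lemma \ref{epsilon_0}. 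The base link $\sigma(v)=v\thickapprox\sigma(y_0)$ is the same computation, using $d(f^K(v),f^K(y))\le\epsilon_0'$, which again follows from $d(v,y)\le\epsilon_{[[x]]}\le\epsilon_1$ via \eqref{e1}. Finally I would choose $\epsilon_{[[x]]}\le\beta(\epsilon_1)$ small enough, using the uniform continuity of $f^{\pm K}$ and of $[\cdot,\cdot]$ (this is exactly the role of the modulus $\beta$), so that every bracket appearing above is defined and every time-$0$ stable distance indeed falls below $\epsilon_0$.

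The main obstacle I anticipate is precisely this last quantitative point: controlling Markov--rectangle membership under $\sigma$. The classes $[[\cdot]]$ are larger on the boundaries of the partition (the lower semicontinuity remarked on earlier), so a priori a link $y_i\thickapprox y_{i+1}$ could be projected onto points straddling a rectangle boundary. What rescues the argument is the combination of the clopen-in-stable property from Proposition \ref{clopenMP}/Lemma \ref{epsilon_0}, which turns a stable $\epsilon_0$-estimate into a genuine common-rectangle statement, together with the $f^K$-contraction that guarantees that estimate for each of the (uniformly at most $N+1$) links. Verifying the time-$0$ stable bound $\le\epsilon_0$ link by link, and pinning down $\epsilon_{[[x]]}$ so that this and the well-definedness of all brackets hold simultaneously, is the technical heart of the proof.
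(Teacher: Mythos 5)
There is a genuine gap, and it sits exactly where you locate the ``technical heart'': the link estimate $\sigma(y_i)\thickapprox\sigma(y_{i+1})$ for the transported chain. To invoke Lemma \ref{epsilon_0} you need $\sigma(y_{i+1})\in X^s(\sigma(y_i),\epsilon_0)$, i.e.\ $d(f^n\sigma(y_i),f^n\sigma(y_{i+1}))\leq\epsilon_0$ for \emph{all} $n\geq 0$. What your ingredients actually give is this: at time $K$ the two points $[f^K(v),f^K(y_i)]$ and $[f^K(v),f^K(y_{i+1})]$ both lie in $X^s(f^K(v),\epsilon_0)$, which yields only $d(f^n\sigma(y_i),f^n\sigma(y_{i+1}))\leq 2\lambda^{n-K}\epsilon_0$ for $n\geq K$ --- already a factor of $2$ too large at $n=K$. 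To do better you must bound $d([f^K(v),f^K(y_i)],[f^K(v),f^K(y_{i+1})])$ via the modulus of continuity of $[\cdot,\cdot]$, say $\omega$, evaluated at $d(f^K(y_i),f^K(y_{i+1}))\leq\lambda^K\epsilon_X$; and then, since $f^{-1}$ \emph{expands} stable distances, you must push that bound backward through $f^{-1},\dots,f^{-K}$ to reach times $0\leq n<K$, which requires the time-$K$ distance to lie below the modulus $\delta_K$ of continuity of $f^{-K}$ at target $\epsilon_0$. Neither inequality $\omega(\lambda^K\epsilon_X)\leq\epsilon_0$ nor $\omega(\lambda^K\epsilon_X)\leq\delta_K$ is available: $\lambda^K\epsilon_X$ is a \emph{fixed} quantity once $K$ is fixed (and $K$ is frozen by Corollary \ref{K} and built into the definition of $\langle\cdot,\cdot\rangle$ and of $\beta$), the paper never coordinates $\omega$ or $\delta_K$ with it, and --- decisively --- none of these quantities shrink when you shrink $\epsilon_{[[x]]}$. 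So your proposed fix, ``pinning down $\epsilon_{[[x]]}$'' at the end, cannot close the gap: your only free parameter has no effect whatsoever on the interior link distances. (It does rescue the base link $v\thickapprox\sigma(y)$, where $d(f^K(v),f^K(y))$ \emph{is} tunable; it is the links inside $[[y]]$, where $d(y_i,y_{i+1})$ can be of size $\epsilon_X$, that are stuck.) Enlarging $K$ instead is circular, since $\delta_K$ deteriorates as $K$ grows.

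This is precisely the difficulty the paper's proof is designed to avoid, and it avoids it by not transporting chains at all. The paper proves only the local statement that a point $w\in[[y]]$ lying within $\epsilon_1$ of $[[x]]$ belongs to $\langle y,[[x]]\rangle$: pick $x'\in[[x]]$ with $d(x',w)<\epsilon_1$ and use the single time-zero bracket $[x',w]$, whose stable radius $\epsilon_X''\leq\epsilon_0$ was pre-arranged in the choice of $\epsilon_1$, so that $[x',w]\thickapprox x'$ gives the witness in $[[x]]$ with no modulus-of-continuity debts. The possibility of points of $[[y]]$ lying $\epsilon_1$-far from $[[x]]$ is then eliminated non-constructively: if no $\epsilon_{[[x]]}$ worked, one would get $y_n\to[[x]]$ and $y_n'\in[[y_n]]$ with $d(y_n',[[x]])\geq\epsilon_1$; extracting limits $y\in[[x]]$ and $y'\sim y$ (using closedness of $\sim$, which rests on the uniform chain bound of Lemma \ref{N}) forces $y'\in[[x]]$ and $d(y',[[x]])\geq\epsilon_1$ simultaneously, a contradiction. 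That compactness step is the substitute for your link-by-link estimates; without it, or without some new idea supplying the missing quantitative control, your construction does not go through.
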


\begin{proof}
Suppose that for each $n\in \mathbb{N}$ with $\frac{1}{n}\leq \beta(\epsilon_1)$ there exists $y_n\in X$ with $d(y_n,[[x]])\leq \frac{1}{n}$ and $[[y_n]]\nsubseteq \langle y_n, [[x]] \rangle$; i.e. there exists $y_n'\in [[y_n]]$ with $y_n'\notin \nolinebreak \langle y_n, [[x]] \rangle$.  We claim that $d(y_n', [[x]]) \geq \epsilon_1$.

Suppose $d(y_n', [[x]])<\epsilon_1$.  Then there exists $x'\in[[x]]$ such that $d(y_n',x')< \epsilon_1$.  By our choice of $\epsilon_1>0$, it follows that $[x',y_n']\in X^s(x',\epsilon_X'')\subseteq X^s(x',\epsilon_0)$.  By our choice of $\epsilon_0$, this implies $[x',y_n']\thickapprox x'$; that is, $[x',y_n']\in[[x]]$.
Furthermore, we also have $[x',y_n']\in X^u(y_n',\epsilon_X'')$.  Therefore $d(f^k(y_n'),f^k([x',y_n']))\leq \epsilon_X$ for each $0\leq k \leq K$, so that $[f^k(y_n'),f^k([x',y_n'])]$ is defined for each $0\leq k \leq K$.  It follows from Definition \ref{Smale} that  
$$f^K[y_n',[x',y_n']]=[f^K(y_n'),f^K[x',y_n']].$$
Therefore
\begin{align*}
y_n'& = [y_n',[x',y_n']] \\
& = f^{-K}[f^K(y_n'),f^K[x',y_n']] \\
& \in \langle y_n', [[x]] \rangle\\
& = \langle y_n, [[x]] \rangle,
\end{align*}
a contradiction.

Since $X$ is compact, there exists a convergent subsequence $y_{n_k}\rightarrow y$ of $(y_n)$, and a convergent subsequence $y_{n_{k_j}}'\rightarrow y'$ of $(y_{n_k}')$.  So $y\sim y'$.  Moreover, since each $d(y_{n_{k_j}}',[[x]])\geq \epsilon_1$, it follows that $d(y',[[x]])\geq \epsilon_1$.

However, since $[[x]]$ is closed as well and $d(y,[[x]])\leq d(y,y_{n_k})+d(y_{n_k},[[x]]) \rightarrow 0$, it follows that $y\in [[x]]$, and hence $y'\in [[x]]$.  So we have $0=d(y',[[x]])\geq \epsilon_1$.
\end{proof}

For sets $A,B\subseteq X$ and $0<\epsilon\leq \epsilon_X$, let 
$$X^u(A,B,\epsilon)=\lbrace (a,b) \; | \;  a\in A,\; b\in B, \; a\in X^u(b,\epsilon)\rbrace$$
(see Figure \ref{X^u}).
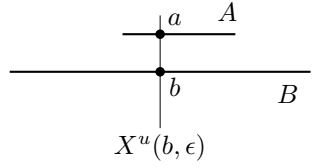
\begin{figure}[h!]
\begin{center}
\begin{pspicture}(0,0)(4,2)
\psline(0,1)(4,1)
\rput(3.7, .7){$B$}
\psline(1.5, 1.5)(3, 1.5)
\rput(2.9, 1.8){$A$}
\psdot(2,1)
\rput(2.2, .8){$b$}
\psdot(2,1.5)
\rput(2.2, 1.7){$a$}
\psline[linewidth=0.1pt](2, .25)(2, 1.75)
\rput(2, 0){$X^u(b, \epsilon)$}
\end{pspicture}
\end{center}
\caption{$(a,b)\in X^u(A, B, \epsilon)$}
\label{X^u}
\end{figure}
 
\noindent We define
\begin{equation*}
d^u(A,B) =\left\lbrace \begin{array}{cl}
\mathrm{sup}\lbrace d(a,b)\; | (a,b)\in X^u(A,B,\epsilon_1)\rbrace & \mathrm{if} \; X^u(A,B,\epsilon_1)\neq \varnothing\\
 \epsilon_1 & \mathrm{otherwise}
\end{array}\right..
\end{equation*}

As a distance function on $X/_\sim$, $d^u$ is clearly symmetric and we will show that it is reflexive, but the triangle inequality fails.  
To prove the reflexivity of $d^u$ on $X/_\sim$, let $(y,z)\in X^u([[x]],[[x]],\epsilon_1)$.  Then by our choice of $K$, 
\begin{equation}\label{uptoK}
f^K(y)\in X^s(f^K(z), \epsilon_0').
\end{equation}
Since we also have $d(y,z)\leq \epsilon_1$, it follows from \eqref{uptoK} and our choice of $\epsilon_1$ that \linebreak $y\in X^s(z,\epsilon_0')$.  So we have 
$$y\in X^s(z,\epsilon_0') \cap X^u(z,\epsilon_1)\subseteq X^s(z,\epsilon_X) \cap X^u(z,\epsilon_X)= \lbrace z \rbrace;$$ 
that is, $y=z$.  It follows that
\begin{equation}\label{duxx}
d^u([[x]],[[x]])=0.
\end{equation}

For $x$ near $\partial\mathcal{P}$, we want to enlarge $[[x]]$ using $\langle x, [[y]]\rangle$ for some $y\in \partial\mathcal{P}$ (see Figure \ref{enlarge}).  

\begin{figure}[h]
\psset{xunit=2cm,yunit=2cm}
\begin{center}
\begin{pspicture}(0,0)(3,2)
\pspolygon[linewidth=.5pt](0,0)(1.5,0)(1.5,1)(0,1)
\pspolygon[linewidth=.5pt](.7,1)(2.7,1)(2.7,1.8)(.7,1.8)
\pspolygon[linewidth=.5pt](1.8,0)(3,0)(3,1)(1.8,1)
\psline[linewidth=1.5pt](0,1)(3,1)
\rput(2.5,.85){$[[y]]$}
\psdot(2,1)
\rput(2, 0.85){$y$}
\psline[linewidth=1.5pt](.7, 1.1)(2.7, 1.1)
\psdot (1, 1.1)
\rput(1, 1.2){$x$}
\rput(1.5, 1.25){$[[x]]$}
\psline[linestyle=dotted, dotsep=1pt](0,1.1)(3, 1.1)
\rput(3.35, 1.2){$\langle x, [[y]]\rangle $}
\end{pspicture}
\end{center}
\caption{Enlarging the $[[\cdot]]$'s near $\partial\mathcal{P}$}
\label{enlarge}
\end{figure}
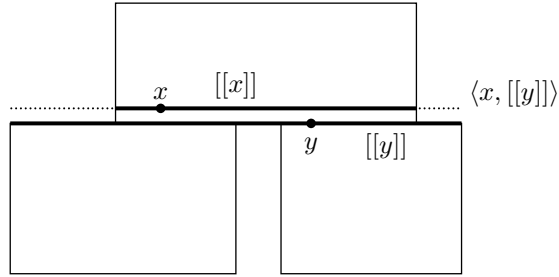

Let $B^o(x,\epsilon)$ denote the open ball around $x$ of radius $\epsilon$.

The collection $\lbrace B^o([[x]],\beta(\frac{1}{4}\epsilon_{[[x]]})) \; | \; x\in \partial \mathcal{P} \rbrace$ clearly covers $\partial \mathcal{P} $, so there exists a finite subcover with centers $[[x_1]], \cdots, [[x_L]]$.  That is, for any $x\in \partial \mathcal{P} $ there exists $1\leq l \leq L$ such that $d(x, [[x_l]])< \beta(\frac{1}{4}\epsilon_{[[x_l]]})$.  Hence, by Lemma \ref{epsilonx}, $$d^u([[x]], [[x_l]])\leq d^u(\langle x, [[x_l]] \rangle , [[x_l]]) \leq \tfrac{1}{4}\epsilon_{[[x_l]]}.$$
Denote $\mathcal{C}\equiv \bigcup_{l=1}^L B^o([[x_l]],\beta(\frac{1}{4}\epsilon_{[[x_l]]})$.

Define 
$$\lceil x \rceil= [[x]] \bigcup \left(\bigcup \lbrace \langle x, [[x_l]] \rangle \; \vert  \; 1\leq l \leq L, \; d^u([[x]], [[x_l]])<\frac{1}{2}\epsilon_{[[x_l]]}\rbrace \right) .$$

Denote $\mathcal{O}(\partial\mathcal{P} )=\lbrace x \in X \; | \; d^u([[x]], [[x_l]])< \frac{1}{2} \epsilon_{[[x_l]]}\; \mathrm{for \; some} \; 1\leq l \leq L\rbrace.$ 
We observed above that $\partial\mathcal{P}\subseteq \mathcal{C} \subseteq \mathcal{O}(\partial\mathcal{P} )$, and so $\mathcal{O}(\partial\mathcal{P} )^c \subseteq \mathcal{C}^c\subseteq \mathrm{Int}(\mathcal{P} )$.  In addition, notice that
$$\lceil x \rceil =[[x]]$$ 
if $x\in \mathcal{O}(\partial\mathcal{P})^c$.  

In fact, it is easy to check that each $x\in \mathcal{O}(\partial\mathcal{P})$ is only enlarged by at most one $[[x_m]]$.  

For $x,y\in X$, let $P(x,y)$ consist of finite paths $p=(p_0, p_1, \cdots, p_I)$ satisfying $p_0=x$, $p_I=y$, and $X^u([[p_i]],[[p_{i+1}]], \epsilon_1)\neq \varnothing$ for each $0\leq i < I$.
We define the length of a path $p=(p_0, p_1, \cdots, p_I)$ to be
$$l(p)=\sum_{i=0}^{I-1}d^u(\lceil p_i \rceil,\lceil p_{i+1} \rceil).$$

\begin{prop}
$\delta([[x]],[[y]])=\mathrm{inf} \lbrace 1, l(p) \; | \; p\in P(x,y) \rbrace$ defines a metric on $X/_\sim$.  Moreover, $\delta$ gives the quotient topology on $X/_\sim$, so that $(X/_\sim, \delta)$ is compact.  
\end{prop}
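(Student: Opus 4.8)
The plan is to treat $\delta$ first as a pseudometric, verify the easy axioms, and defer the delicate non-degeneracy to the end. First I would record that $\delta$ is \emph{well defined} on $X/_\sim$: the enlarged class $\lceil x\rceil$ and the admissibility condition $X^u([[p_i]],[[p_{i+1}]],\epsilon_1)\neq\varnothing$ depend only on the classes $[[\cdot]]$ (we already saw $\langle y,[[x]]\rangle=\langle z,[[x]]\rangle$ for $z\in[[y]]$, and the selection condition $d^u([[\cdot]],[[x_l]])<\tfrac12\epsilon_{[[x_l]]}$ is class-invariant), so the length $l(p)$ depends only on the sequence of classes a path visits. Symmetry of $\delta$ then follows by reversing a path, using that both $d^u$ and the relation ``$a\in X^u(b,\epsilon_1)$'' are symmetric. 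The triangle inequality follows by concatenating a path in $P(x,y)$ with one in $P(y,z)$: concatenation is admissible (the junction contributes the step $X^u([[y]],[[y]],\epsilon_1)\ni(y,y)$), $l$ is additive, and the truncation at $1$ makes $\delta([[x]],[[z]])\le\delta([[x]],[[y]])+\delta([[y]],[[z]])$ automatic whenever either summand equals $1$. Finally $\delta([[x]],[[x]])=0$ via the trivial one-point path.

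Next I would show the quotient map $q\colon X\to(X/_\sim,\delta)$ is continuous, i.e. $x_n\to x$ implies $\delta([[x_n]],[[x]])\to 0$. For $n$ large, $d(x_n,x)\le\epsilon_1$, so $w_n=[x,x_n]$ is defined and, by the choice of $\epsilon_1$, lies in $X^s(x,\epsilon_X'')\cap X^u(x_n,\epsilon_X'')$; backward contraction on unstable sets gives $w_n\in X^u(x_n,\epsilon_1)$ for $n$ large. Since $w_n\in X^s(x,\epsilon_0)\subseteq R_i$ for the rectangle $R_i\ni x$ (Lemma \ref{epsilon_0}), we get $w_n\thickapprox x$, hence $[[w_n]]=[[x]]$. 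Using well-definedness I may compute $\delta([[x]],[[x_n]])=\delta([[w_n]],[[x_n]])$ with the single-step path $(w_n,x_n)$, obtaining $\delta([[x]],[[x_n]])\le d^u(\lceil x\rceil,\lceil x_n\rceil)$. It then remains to prove $d^u(\lceil x\rceil,\lceil x_n\rceil)\to 0$: if not, after passing to subsequences I get $a_n\in\lceil x\rceil$, $b_n\in\lceil x_n\rceil$ with $a_n\in X^u(b_n,\epsilon_1)$ and $d(a_n,b_n)\ge\eta>0$. Since each $\lceil x\rceil$ is compact and the enlargement is \emph{upper semicontinuous} --- $b_n\to b$ with $x_n\to x$ forces $b\in\lceil x\rceil$, which is exactly what the uniform constants $\epsilon_{[[x_l]]}$ and $\beta(\cdot)$ were built to guarantee through Lemma \ref{epsilonx} --- the limits yield $a,b\in\lceil x\rceil$ with $a\in X^u(b,\epsilon_1)$ and $d(a,b)\ge\eta$, contradicting the reflexivity $d^u(\lceil x\rceil,\lceil x\rceil)=0$. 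This reflexivity holds by the same expansiveness argument that gave $d^u([[x]],[[x]])=0$, since $f^K(\lceil x\rceil)$ still lies in a single local stable set.

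The heart of the matter, and the step I expect to be the main obstacle, is \emph{non-degeneracy}: $\delta([[x]],[[y]])=0$ must force $x\sim y$. The difficulty is that a path may consist of arbitrarily many tiny unstable moves whose cumulative drift could, a priori, carry $x$ across a partition boundary to an inequivalent class while keeping the total length near $0$; controlling this unbounded number of steps is the crux. The strategy is to prove that a \emph{single} path of sufficiently small total length joins only equivalent points, and then apply this to a length-minimizing sequence of paths. The mechanism is that a small unstable step from a class into the $\epsilon_1$-enlargement $\langle\,\cdot\,,\,\cdot\,\rangle$ of the next class is, by Lemma \ref{epsilonx}, trapped inside that enlargement and hence inside a single stable leaf; because the rectangles are clopen in the stable direction (Lemma \ref{epsilon_0}) and the classes $[[\cdot]]$ are closed (Corollary \ref{K}), such a trapped step cannot change the equivalence class. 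Here the uniform constants $K$, $\epsilon_0'$, and $\epsilon_1$ together with compactness of $X$ are essential, and assembling them into a drift-free estimate is the technically hardest part of the construction.

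Finally I would assemble the topology statement. By the previous paragraphs $\delta$ is a genuine metric, so $(X/_\sim,\delta)$ is Hausdorff. Continuity of $q$ shows that every $\delta$-open set pulls back to an open set of $X$, i.e. $\tau_\delta\subseteq\tau_q$ where $\tau_q$ is the quotient topology; equivalently the identity $\mathrm{id}\colon(X/_\sim,\tau_q)\to(X/_\sim,\tau_\delta)$ is continuous. Since $X$ is compact and $q$ is a continuous surjection, $(X/_\sim,\tau_q)$ is compact, while $(X/_\sim,\tau_\delta)$ is Hausdorff; a continuous bijection from a compact space to a Hausdorff space is a homeomorphism, so $\tau_\delta=\tau_q$ and $(X/_\sim,\delta)$ is compact.
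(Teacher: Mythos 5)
Your handling of the routine items (well-definedness on classes, symmetry by path reversal, the triangle inequality by concatenation with truncation at $1$, reflexivity via $d^u(\lceil x\rceil,\lceil x\rceil)=0$, and the closing compact-to-Hausdorff argument identifying $\tau_\delta$ with the quotient topology) matches the paper's outline and is fine. The genuine gap is precisely at the step you yourself flag as the main obstacle, which is also the step the paper calls ``the only difficult element'': $\delta([[x]],[[y]])=0 \Rightarrow [[x]]=[[y]]$. Your proposed mechanism for it is not merely unfinished; it is false as stated. You claim that ``a single path of sufficiently small total length joins only equivalent points'' because a small unstable step landing in an enlargement is ``trapped'' and ``cannot change the equivalence class.'' But take $x$ in the interior of a rectangle with $x\notin\mathcal{O}(\partial\mathcal{P})$, and take $y\in X^u(x,\epsilon)$ with $y\neq x$ and $\epsilon$ tiny: the one-step path $(x,y)$ is admissible, its length $d^u(\lceil x\rceil,\lceil y\rceil)$ is comparable to $d(x,y)$ and hence arbitrarily small, yet $[[x]]\neq[[y]]$ because $y$ lies on a different stable leaf. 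Unstable steps \emph{do} change the class --- that is the whole point of the metric, which measures accumulated unstable displacement. If your mechanism were correct, every class would be $\delta$-isolated, forcing the compact quotient to be finite, which it is not.

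What non-degeneracy actually requires is a quantitative drift estimate, not a qualitative trapping statement: for fixed endpoints $x,y$, the length of \emph{any} sufficiently short admissible path must dominate the unstable displacement between the endpoint classes. Concretely, the paper shows that if $x\notin\mathcal{O}(\partial\mathcal{P})$ then $d(x,[y,x])\leq l(p)$ for all $p\in P(x,y)$ with $l(p)$ small, while if $x\in\mathcal{O}(\partial\mathcal{P})$ (so $\lceil x\rceil=\langle x,[[x_m]]\rangle$) then $d(y,[[x]])\leq l(p)$ whenever $l(p)<\frac{1}{2}\epsilon_{[[x_m]]}-d^u([[x]],[[x_m]])$. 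Only then does $\inf_p l(p)=0$ force $x=[y,x]\in X^s(y,\epsilon_X)$ inside a common rectangle (hence $x\thickapprox y$), respectively $y\in[[x]]$ by closedness of classes. Proving this bound --- that the total unstable drift over an unbounded number of steps is controlled by the sum of the $d^u$-lengths, with the free stable moves inside the $\lceil\cdot\rceil$'s producing no drift after applying $f^K$ --- is the missing content, and nothing in your sketch produces it. A secondary, fixable issue: in your continuity argument, upper semicontinuity of $x\mapsto\lceil x\rceil$ is delicate because the enlargement is selected by the strict inequality $d^u([[x]],[[x_l]])<\frac{1}{2}\epsilon_{[[x_l]]}$, which need not persist in limits; you can repair this by noting that $f^K$ carries both $\lceil x\rceil$ and any limit of points of $\lceil x_n\rceil$ into a single local stable set of $f^K(x)$, and running your expansivity argument there.
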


\begin{proof}[Outline of Proof.]
Due to the length and highly technical nature of this argument, we simply give an outline.  
 
\noindent i) $[[x]]=[[y]]$  $\Rightarrow$ $\delta([[x]],[[y]])=0$:

If $[[x]]=[[y]]$ then $\lceil x \rceil =\lceil y \rceil$ and $(x,y)\in P(x,y)$.  So $\delta([[x]],[[y]])\leq d^u( \lceil x \rceil ,\lceil y \rceil)$.  Let's show that $d^u( \lceil x \rceil ,\lceil x \rceil)=0$ for all $x\in X$.  We remarked above that either $\lceil x \rceil = [[x]]$ or $\lceil x \rceil= \langle x,[[x_m]] \rangle$ for some $1\leq m \leq L$ with $d^u([[x]],[[x_m]])<\frac{1}{2}\epsilon_{[[x_m]]}$.   For the first case, we've already shown that $d^u( [[x]] ,[[x]])=0$ for all $x$.  For the latter case, it can be shown that 
\[X^u(\langle x,[[x_m]] \rangle, \langle x,[[x_m]] \rangle,\epsilon_1)=\lbrace (f^{-K}[f^K(x),f^K(x_m')],f^{-K}[f^K(x),f^K(x_m')]) \; | \]
\hfill $x_m'\in [[x_m]] \rbrace,$

\noindent so that $d^u(\langle x,[[x_m]] \rangle, \langle x,[[x_m]] \rangle)=0$.  

\medskip
\noindent ii)  $\delta([[x]],[[y]])=0$ $\Rightarrow$ $[[x]]=[[y]]$:

This is the only difficult element of the proof, and is broken into two separate cases.  

\textit{Case 1:}  $x\notin \mathcal{O}(\partial\mathcal{P})$

\noindent In this case it is clear that $\delta([[x]],[[y]])=0$ implies that $x$ and $y$ are in the interior of the same rectangle.  Moreover, for $p\in P(x,y)$ with $l(p)$ small enough, it can be shown that $d(x,[y,x])\leq l(p).$  As a result, we have $x=[y,x]\in X^s(y,\epsilon_X)$ and hence $x \thickapprox y$.

\textit{Case 2:} $x\in \mathcal{O}(\partial\mathcal{P})$

\noindent  In this case we make use of the fact that $\lceil x \rceil=\langle x,[[x_m]]\rangle$ for some $1\leq m \leq \nolinebreak L$ with $d^u([[x]],[[x_m]]) <\frac{1}{2}\epsilon_{[[x_m]]}$.  It can be shown that for $p \in P(x,y)$ with \linebreak $l(p)< \frac{1}{2}\epsilon_{[[x_m]]} - d^u([[x]],[[x_m]])$, we have $d(y,[[x]])\leq l(p)$; it follows that $y\in [[x]]$.  

\medskip
\noindent iii) $\delta([[x]],[[y]])=\delta([[y]],[[x]])$:

This follows immediately from the observation that $(p_0,\cdots, p_I)\in P(x,y)$ if and only if $(p_I,\cdots, p_0)\in P(y,x)$, and the symmetry of $d^u$.  
 
 \medskip 
 \noindent iv) $\delta([[x]],[[y]])\leq \delta([[x]],[[z]])+\delta([[z]],[[y]])$:

If $\delta([[x]],[[z]])=1$ or $\delta([[z]],[[y]])=1$, this holds trivially.  So assume that $\delta([[x]],[[z]])<1$ and $\delta([[z]],[[y]])<1$.  If $p=(p_0, \cdots, p_I)\in P(x,z)$ and \linebreak $q=(q_0, \cdots, q_J)\in P(z,y)$, then $(p_0, \cdots , p_I=q_0, q_1, \cdots q_J)\in P(x,y)$, that is,  $P(x,y)\neq \varnothing$.  Therefore
\begin{align*}
\delta([[x]],[[y]]) & \leq  \mathrm{inf}\lbrace l(p) \; | \; p\in P(x,y) \rbrace \\
& \leq  \mathrm{inf}\lbrace l(p) \; | \; p=(p_0,\cdots, p_I)\in P(x,y), \; p_i=z \; \mathrm{for \; some} \; 0\leq i \leq I \rbrace \\
& =  \mathrm{inf}\lbrace l(p_0, \cdots, p_i) + l(p_i, \cdots, p_I)   \; | \; p=(p_0,\cdots, p_I)\in P(x,y), \; p_i=z  \rbrace \\
& =  \mathrm{inf}\lbrace l(p') + l(p'')   \; | \; p'\in P(x,z), \; p''\in P(z,y) \rbrace \\
& =  \mathrm{inf}\lbrace l(p') \; | \; p'\in P(x,z) \rbrace +  \mathrm{inf}\lbrace l(p'')   \; | \;  p''\in P(z,y) \rbrace \\
& =  \delta([[x]],[[z]])+\delta([[z]],[[y]]).
\end{align*}

 \medskip 
 \noindent v) $\delta$ gives the quotient topology on $X/_\sim$
 
Let $\mathcal{T}_q$ denote the quotient topology on $X/_\sim$.  The quotient map $(X,d) \rightarrow \nolinebreak (X/_\sim, \delta)$ can be shown to be  continuous by considering the two separate cases of part ii) above.  Since $\mathcal{T}_q$ is defined to be the finest topology on $X$ which makes the quotient map $X \rightarrow X/_\sim$ continuous, it follows that the identity map $\mathrm{id}:(X/_\sim, \mathcal{T}_q) \rightarrow \nolinebreak (X/_\sim, \delta)$ is continuous.  Since this identity map is a bijection from a compact space to a Hausdorff space, it follows that it is in fact a homeomorphism.  That is, the two topologies are the same.  
 
\end{proof}

The final element in the construction of our quotient space is the definition of an appropriate mapping.  
Let us show that the natural mapping $\alpha : X/_\sim \rightarrow X/_\sim$ given by 
$$\alpha([[x]])=[[f(x)]]$$
is well-defined.

We begin by showing that $y\in [[x]]$ implies $f(y)\in [[f(x)]]$.  First, consider the case where $x\in \mathrm{Int}(R_i)\cap f^{-1}(\mathrm{Int}(R_j))$, and suppose that $y \thickapprox x$.  Then $y\in \nolinebreak X^s(x, R_i)$, so by the definition of a Markov partition, it follows that 
$$f(y)\in f(X^s(x, R_i))\subseteq X^s(f(x), R_j);$$
that is, $f(y)\thickapprox f(x)$.  Since $\sim$ is generated by $\thickapprox$, we also have $x\sim y$ implies that \linebreak $f(x)\sim f(y)$.

Now choose any $x\in X$, and suppose $y \thickapprox x$.  Then $x,y\in R_i$ for some $R_i\in \mathcal{P}$.  
Bowen \cite{Bowen} proves that $f(x)\in R_j$ for some $j$ with $\mathrm{Int}(R_i)\cap f^{-1}(\mathrm{Int}(R_j))\neq \varnothing$, and moreover that
$$f(X^s(x, R_i))\subseteq X^s(f(x), R_j).$$
Therefore $f(y)\thickapprox f(x)$.  Since $\sim$ is generated by $\thickapprox$, we also have $x\sim y$ implies $f(x)\sim f(y)$.  

That $\alpha$ is surjective follows immediately from the surjectivity of $f$.  

\begin{prop}
$\alpha: X/_\sim \rightarrow X/_\sim$ is continuous.
\end{prop}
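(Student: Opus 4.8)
The plan is to deduce the continuity of $\alpha$ directly from the universal property of the quotient topology, rather than wrestling with the path metric $\delta$ itself. In the preceding proposition we established that $\delta$ realizes the quotient topology on $X/_\sim$; equivalently, writing $q\colon (X,d)\to (X/_\sim,\delta)$, $q(x)=[[x]]$, for the canonical surjection, the map $q$ is continuous and $X/_\sim$ carries the finest topology making it so. This is precisely the hypothesis needed in order to recognize continuous maps out of the quotient, so I would begin by recording that $q$ is a quotient map.

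The key observation is the commuting identity $\alpha\circ q = q\circ f$. Indeed, for every $x\in X$ we have $(\alpha\circ q)(x)=\alpha([[x]])=[[f(x)]]=q(f(x))=(q\circ f)(x)$, where the middle equality is just the definition of $\alpha$, whose well-definedness was verified above. Since $f\colon X\to X$ is continuous (it is a homeomorphism of the Smale space) and $q$ is continuous, the composite $q\circ f$ is continuous; hence so is $\alpha\circ q$.

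Next I would invoke the universal property in the form needed: if $q$ is a quotient map and $Z$ is any topological space, then a function $g\colon X/_\sim\to Z$ is continuous if and only if $g\circ q$ is continuous. Applying this with $g=\alpha$ and $Z=X/_\sim$, the continuity of $\alpha\circ q$ from the previous paragraph immediately yields the continuity of $\alpha$, completing the argument.

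The fortunate feature of this approach is that there is essentially no obstacle remaining: all the genuine difficulty was already absorbed into the proof that $\delta$ induces the quotient topology. The only alternative I can see -- a direct $\varepsilon$--$\delta$ estimate showing that $\alpha$ moves distances in a controlled way -- would be considerably more painful, since it would require simultaneously tracking how $f$ distorts the function $d^u$, the enlarged classes $\lceil\cdot\rceil$, and the lengths of admissible paths in $P(x,y)$. Routing the argument through the quotient property sidesteps every one of these estimates, so I expect the proof to be short.
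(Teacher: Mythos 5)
Your proof is correct, and it takes a genuinely different route from the paper. The paper proves continuity of $\alpha$ by hand: given $[[y_n]]\rightarrow[[y]]$, it produces pairs $(a_n,b_n)\in X^u([[f(y_n)]],[[f(y)]],\epsilon_1)$ with $d(a_n,b_n)\rightarrow 0$ and then runs a case analysis according to whether $y$ and $f(y)$ lie in $\mathcal{O}(\partial\mathcal{P})$, estimating $\delta([[f(y_n)]],[[f(y)]])$ via $d^u$ and the enlarged classes $\lceil\cdot\rceil$ --- and even then the argument is only outlined. You instead observe that the preceding proposition already identifies the $\delta$-topology with the quotient topology, so the canonical surjection $q$ is a topological quotient map; the identity $\alpha\circ q=q\circ f$ (which is exactly the well-definedness statement proved just before) together with continuity of $f$ and the universal property of quotient maps then gives continuity of $\alpha$ with no further estimates. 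There is no circularity here: the paper's proof that $\delta$ induces the quotient topology (part v of the metric proposition) uses only continuity of $q$, compactness of $X$, and the compact-to-Hausdorff argument, never the continuity of $\alpha$. What your approach buys is a short, complete, and essentially formal proof; what the paper's direct approach buys is quantitative control on how $\alpha$ interacts with $d^u$ and the $\lceil\cdot\rceil$'s, estimates of the same flavor as those reused later in verifying Axioms 1 and 2, whereas your argument yields continuity alone. As a self-contained replacement for this particular proposition, your proof is the cleaner one.
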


\begin{proof}[Outline of Proof]
Let $[[y_n]]\rightarrow [[y]]$.  We want to show that $[[f(y_n)]]\rightarrow [[f(y)]]$.

First, it can be shown that there exist
\begin{equation}\label{pair}
(a_n,b_n)\in X^u([[f(y_n)]],[[f(y)]],\epsilon_1) \; \mathrm{such \; that} \; d(a_n,b_n)\rightarrow 0 \; \mathrm{as} \; n \rightarrow \infty
\end{equation}
by separately considering the two cases $y\notin \mathcal{O}(\partial\mathcal{P})$ and $y\in \mathcal{O}(\partial\mathcal{P})$.

Then it can be shown that \eqref{pair} implies  $[[f(y_n)]]\rightarrow [[f(y)]]$ by considering the two cases 
$f(y)\notin \mathcal{O}(\partial\mathcal{P})$ and $f(y)\in \mathcal{O}(\partial\mathcal{P})$.  In the first case we get 
\begin{align*}
\delta([[f(y_n)]],[[f(y)]]) & \leq  d^u(\lceil f(y_n)\rceil ,\lceil f(y) \rceil)\\
& = \mathrm{sup} \lbrace d([f(y_n),z], z) \; | \; z \in [[f(y)]]\rbrace \\
& = \mathrm{sup} \lbrace d([a_n,z],[b_n,z]) \; | \; z \in [[f(y)]]\rbrace \\
& \rightarrow  0.
\end{align*}
And in the second case we get 
\begin{align*}
\delta([[f(y_n)]],[[f(y)]]) & \leq  d^u(\lceil f(y_n)\rceil ,\lceil f(y)\rceil) \\
& \leq  \mathrm{sup}\lbrace d(f^{-K}[f^K(f(y_n),f^K(x_m'')],f^{-K}[f^K(f(y)),f^K(x_m'')]) \; | \\
&\qquad \qquad \hfill x_m'' \in [[x_m]]\rbrace \\
& =  \mathrm{sup}\lbrace d(f^{-K}[f^K(a_n),f^K(x_m'')],f^{-K}[f^K(b_n),f^K(x_m'')]) \; | \\
& \qquad \qquad x_m'' \in [[x_m]]\rbrace \\
& \rightarrow  0.
\end{align*}


\end{proof}

\subsection{The Quotient Space Satisfies Axioms 1 and 2}

We have already shown that $(X/_\sim, \delta)$ is a compact metric space, and that the mapping $\alpha:X/_\sim \rightarrow X/_\sim$ is continuous and surjective.  

Choose $K$ as in Lemma \ref{K} and let $\gamma=\lambda$, the expansive constant for the Smale space $(X,d,f)$.  We will show that there exists $\beta>0$ such that 
\begin{description}
\item[Axiom 1]{\textit{if $\delta([[x]],[[y]])\leq\beta$ then $$\delta([[f^K(x)]], [[f^K(y)]]) \leq \gamma^K \delta([[f^{2K}(x)]], [[f^{2K}(y)]]),$$ and}}
\item[Axiom 2]{\textit{for all $[[x]]\in X/_\sim$ and $0<\epsilon\leq\beta$, 
$$\alpha^{K}(B([[f^K(y)]], \epsilon)) \subseteq \alpha^{2K}(B([[y]], \gamma\epsilon).$$}}
\end{description}

\begin{lemma}\label{subset}
For any $[[x]]\in X/_\sim$, $f^K\lceil x \rceil \subseteq [[f^K(x)]]$.
\end{lemma}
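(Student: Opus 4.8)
The plan is to decompose $\lceil x \rceil$ exactly as in its definition, $\lceil x \rceil = [[x]] \cup \bigcup\{\langle x, [[x_l]]\rangle \;|\; 1\le l\le L,\ d^u([[x]],[[x_l]])<\tfrac12\epsilon_{[[x_l]]}\}$, and to verify the containment $f^K(\,\cdot\,)\subseteq[[f^K(x)]]$ separately on each piece. For the piece $[[x]]$ itself the inclusion is immediate from the fact established just before the continuity of $\alpha$, namely that $y\sim x$ implies $f(y)\sim f(x)$; this gives $f([[x]])\subseteq[[f(x)]]$, and iterating $K$ times yields $f^K([[x]])\subseteq[[f^K(x)]]$. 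Thus the real content lies in the behaviour of $f^K$ on the enlargement terms $\langle x, [[x_l]]\rangle$.

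Fix $l$ with $d^u([[x]],[[x_l]])<\tfrac12\epsilon_{[[x_l]]}$ and take a typical point $w=f^{-K}[f^K(x),f^K(x_l')]\in\langle x,[[x_l]]\rangle$ with $x_l'\in[[x_l]]$, so that $f^K(w)=[f^K(x),f^K(x_l')]$. First I would observe that since $\tfrac12\epsilon_{[[x_l]]}\le\tfrac12\beta(\epsilon_1)\le\tfrac12\epsilon_1<\epsilon_1$, the defining inequality forces $X^u([[x]],[[x_l]],\epsilon_1)\neq\varnothing$ (otherwise $d^u=\epsilon_1$), and hence there is a pair realizing $d([[x]],[[x_l]])\le d^u([[x]],[[x_l]])<\epsilon_1$. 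This is precisely the hypothesis under which the estimate \eqref{smallenough} was derived, so applying that estimate with $z=x\in[[x]]$ and $x'=x_l'\in[[x_l]]$ gives $d(f^K(x),f^K(x_l'))\le 3\epsilon_0'\le\epsilon_0''$, the last inequality coming from $\epsilon_0'\le\tfrac13\epsilon_0''$.

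The key step is then to push the bracket into the correct rectangle. By the choice of $\epsilon_0''$, the bound $d(f^K(x),f^K(x_l'))\le\epsilon_0''$ yields $f^K(w)=[f^K(x),f^K(x_l')]\in X^s(f^K(x),\epsilon_0)$. Letting $R_i$ be a rectangle containing $f^K(x)$, Lemma \ref{epsilon_0} gives $X^s(f^K(x),\epsilon_0)\subseteq R_i$, so $f^K(w)$ lies in $R_i$ and in $X^s(f^K(x),\epsilon_0)\subseteq X^s(f^K(x),\epsilon_X)$; by the definition of $\thickapprox$ this means $f^K(w)\thickapprox f^K(x)$, whence $f^K(w)\in[[f^K(x)]]$. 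Since $w$ and $l$ were arbitrary, $f^K(\langle x,[[x_l]]\rangle)\subseteq[[f^K(x)]]$ for every relevant $l$, and combining with the $[[x]]$ case completes the argument.

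The main obstacle is exactly this enlargement term: one must show that applying $f^K$ to the bracket used to build $\langle x,[[x_l]]\rangle$ lands a point not merely forward-asymptotic to $f^K(x)$, but actually inside the single rectangle around $f^K(x)$, so that the one-step relation $\thickapprox$ applies directly. Settling for membership in some larger $X^s(f^K(x),\epsilon)$ and then contracting under further iterates would be useless, since $\sim$ is not $f^{-1}$-invariant. It is precisely the constant $\epsilon_0''$, together with the inequality $3\epsilon_0'\le\epsilon_0''$ and the uniform containment $X^s(\,\cdot\,,\epsilon_0)\subseteq R_i$ from Lemma \ref{epsilon_0}, that is arranged to deliver this rectangle-level conclusion in a single stroke.
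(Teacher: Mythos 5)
Your proof is correct and takes essentially the same route as the paper: the paper splits into the cases $x\notin\mathcal{O}(\partial\mathcal{P})$ (where $\lceil x\rceil=[[x]]$) and $x\in\mathcal{O}(\partial\mathcal{P})$ (where $\lceil x\rceil=\langle x,[[x_m]]\rangle$), and in the latter case runs exactly your key computation --- \eqref{smallenough} gives $d(f^K(x),f^K(x_m'))\leq 3\epsilon_0'\leq\epsilon_0''$, the choice of $\epsilon_0''$ places $[f^K(x),f^K(x_m')]$ in $X^s(f^K(x),\epsilon_0)$, and Lemma \ref{epsilon_0} then yields $[f^K(x),f^K(x_m')]\thickapprox f^K(x)$. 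Your only departures are cosmetic: you decompose $\lceil x\rceil$ by the terms of its defining union rather than by cases on $\mathcal{O}(\partial\mathcal{P})$ (which spares you the fact that at most one enlargement occurs), and you justify $f^K([[x]])\subseteq[[f^K(x)]]$ by iterating the $\sim$-invariance established for the well-definedness of $\alpha$, where the paper instead cites Lemmas \ref{epsilon_0} and \ref{K}.
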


\begin{proof} 
\textit{Case 1:}  $x\notin \mathcal{O}(\partial\mathcal{P})$

Then $\lceil x \rceil =[[x]]$, so that $f^K\lceil x \rceil = f^K [[x]]\subseteq [[f^K(x)]]$ by Lemmas \ref{epsilon_0} and \ref{K}.

\noindent \textit{Case 2:} $x\in \mathcal{O}(\partial\mathcal{P})$

Then $\lceil x \rceil = \langle x, [[x_m]] \rangle$ for some $1\leq m \leq L$ with $d^u([[x]],[[x_m]])<\frac{1}{2}\epsilon_{[[x_m]]}$.  Choose  $x_m'\in [[x_m]]$.  
Recall from \eqref{smallenough} that 
$$d(f^K(x),f^K(x_m'))\leq 3\epsilon_0'\leq \epsilon_0''.$$
By our choice of $\epsilon_0''$, it follows that 
$$[f^K(x),f^K(x_m')]\in X^s(f^K(x),\epsilon_0).$$
So by Lemma \ref{epsilon_0}, we have $[f^K(x),f^K(x_m')]\thickapprox f^K(x).$
\end{proof}

Lemma \ref{subset} can be used to prove that there exists $\eta_1>0$ such that 
\begin{equation}\label{6.26}
\delta([[x]],[[y]])<\eta_1\; \mathrm{implies}\; X^u([[f^K(x)]],[[f^K(y)]],\beta(\epsilon_1))\neq \varnothing. 
\end{equation}
By the uniform continuity of $\alpha$ and $f^{-1}$, there exists 
$$0<\eta_2<\eta_1$$ 
such that $\delta([[x]],[[y]])\leq \eta_2$ implies $\delta(\alpha^K[[x]],\alpha^K[[y]])<\eta_1$, and such that \linebreak $d(x,y)\leq \eta_2$ implies $d(f^{-K}(x),f^{-K}(y))\leq \epsilon_1$.  It follows from \eqref{6.26} that 
\begin{equation}\label{6.27}
\delta([[x]],[[y]])\leq \eta_2 \; \mathrm{implies} \; d^u([[f^K(x)]],[[f^K(y)]])\leq \delta([[f^K(x)]],[[f^K(y)]]). 
\end{equation}

Choose
$$0<\eta_3<\eta_2$$ 
such that $\delta([[x]],[[y]])\leq \eta_3$ implies $\delta(\alpha^K[[x]],\alpha^K[[y]])\leq \eta_2$, and such that $d(x,y)\leq \nolinebreak\eta_3$ implies that $d([x,z],[y,z])\leq \eta_2$ for all $z$ such that $(x,z),(y,z)\in \mathrm{domain}([\cdot, \cdot ])$.   We will show that $\beta=\eta_3$ satisfies Axioms 1 and 2.  

For Axiom 1, suppose that $\delta([[x]],[[y]])\leq \eta_3<\eta_1$.  By \eqref{6.26}, we have that
$(f^K(x),f^K(y))\in P(f^K(x),f^K(y))$,
so that 
\begin{equation}\label{no1}
\delta([[f^K(x)]], [[f^K(y)]])\leq d^u(\lceil f^K(x) \rceil , \lceil f^K(y) \rceil ) .
\end{equation}
Since $\delta([[x]],[[y]])\leq \eta_3$ implies $\delta([[f^{K}(x)]],[[f^{K}(y)]])\leq \eta_2$, we have by \eqref{6.27} that
\begin{equation}\label{no3}
d^u([[f^{2K}(x)]],[[f^{2K}(y)]])\leq \delta([[f^{2K}(x)]],[[f^{2K}(y)]]).
\end{equation}
And by Lemma \ref{subset}, 
\begin{equation}\label{no2}
\lceil f^K(x) \rceil\subseteq f^{-K}[[f^{2K}(x)]] \; \mathrm{and} \; \lceil f^K(y) \rceil\subseteq  f^{-K}[[f^{2K}(y)]].
\end{equation}
Combining \eqref{no1}, \eqref{no2}, and \eqref{no3}, we get
\begin{align*}
\delta([[f^K(x)]], [[f^K(y)]])&\leq d^u(\lceil f^K(x) \rceil , \lceil f^K(y) \rceil ) \\
&\leq d^u(f^{-K}[[f^{2K}(x)]], f^{-K}[[f^{2K}(y)]])\\
& = \mathrm{sup}\lbrace d(u,v) \; | \; (u,v)\in X^u(f^{-K}[[f^{2K}(x)]], f^{-K}[[f^{2K}(y)]],\epsilon_1) \rbrace \\
& = \mathrm{sup}\lbrace d(f^{-K}(u),f^{-K}(v)) \; | \\
&\qquad \qquad \qquad \qquad \qquad (u,v)\in X^u([[f^{2K}(x)]], [[f^{2K}(y)]],\epsilon_1) \rbrace \\
& = \mathrm{sup}\lbrace \lambda^K d(u,v) \; | \; (u,v)\in X^u([[f^{2K}(x)]], [[f^{2K}(y)]],\epsilon_1) \rbrace \\
& = \lambda^K d^u([[f^{2K}(x)]], [[f^{2K}(y)]])\\
& \leq \lambda^K \delta([[f^{2K}(x)]], [[f^{2K}(y)]]).\\
\end{align*}

\medskip

Axiom 2 will take a little more work.  We want to prove that 
$$\alpha^{K}(B([[f^K(y)]], \epsilon)) \subseteq \alpha^{2K}(B([[y]], \gamma\epsilon)$$
for all $0<\epsilon\leq\eta_3$.

Let $\delta([[z]], [[f^K(y)]])\leq \eta_3$.  Recall that we denoted our finite cover of $\partial(\mathcal{P})$ by $\mathcal{C}\equiv \bigcup_{l=1}^L B^o([[x_l]],\beta(\frac{1}{4}\epsilon_{[[x_l]]})$.  We will consider two separate cases.  

\medskip

\noindent \textit{Case 1:}  $f^K(y) \notin \mathcal{C}$

It is not hard to show that $f^K(y)$ and $z$ are in the interior of the same rectangle and that 
$$(f^K(y),[z,f^K(y)])\in X^u([[f^K(y)]],[[z]],\eta_3)\subseteq X^u([[f^K(y)]],[[z]],\eta_2).$$
So by our choice of $\eta_2$,
$$(y,f^{-K}([z,f^K(y)]))\in X^u([[y]],[[f^{-K}([z,f^K(y)])]],\epsilon_1),$$
hence $\delta([[y]],[[f^{-K}([z,f^K(y)])]])  \leq d^u(\lceil y\rceil, \lceil f^{-K}([z,f^K(y)])\rceil).$
Moreover, we have 
$$d([[f^K(y)]],[[z]])\leq d(f^K(y),[z,f^K(y)])\leq \eta_3 \leq \epsilon_1'.$$  
We leave the proof that
\begin{align*}
d^u([[f^K(y)]],[[z]]) & =\mathrm{sup}\lbrace d(u, [z,u])\; | \; u\in [[f^K(y)]] \rbrace \\
& =\mathrm{sup}\lbrace d([f^K(y),u], [\,[z,f^K(y)],u])\; | \; u\in [[f^K(y)]] \rbrace\\
& \leq \eta_2
\end{align*}
as an exercise for the interested reader.  
 
So by Lemma \ref{subset},
\begin{align*}
\delta([[y]],[[f^{-K}([z,f^K(y)])]]) & \leq d^u(\lceil y\rceil, \lceil f^{-K}([z,f^K(y)])\rceil) \\
& \leq d^u(f^{-K}[[f^K(y)]], f^{-K}[[\,[z,f^K(y)]\,]]) \\
& = d^u(f^{-K}[[f^K(y)]], f^{-K}[[z]]) \\
& \leq \lambda^K d^u([[f^K(y)]],[[z]]) \\
& \leq \eta_2
\end{align*}

It follows by \eqref{6.27} that $d^u([[f^K(y)]],[[z]]) \leq \delta([[f^K(y)]],[[z]]).$  Therefore
\begin{align*}
\delta([[y]],[[f^{-K}([z,f^K(y)])]]) & \leq \lambda^K d^u([[f^K(y)]],[[z]]) \\
& \leq \lambda^K \delta([[f^K(y)]],[[z]]).
\end{align*}
Moreover, we have $\alpha^{2K}([[f^{-K}([z,f^K(y)])]])=\alpha^K([[\,[z,f^K(y)]\,]])=\alpha^K[[z]]$.

\medskip

\noindent \textit{Case 2:} $f^K(y)\in \mathcal{C}$

Then $d(f^K(y),[[x_m]])<\beta(\frac{1}{4}\epsilon_{[[x_m]]})$ for some $1\leq m \leq L$.  

Let $p=(p_0, \cdots, p_I) \in P(f^K(y),z)$ such that $l(p)< \eta_1$.  
Since $\mathcal{C}\subseteq \mathcal{O}(\partial\mathcal{P})$, 
it follows with a little work that $[[p_i]]\subseteq \langle p_i, [[x_m]] \rangle \subseteq \lceil p_i \rceil $ for all $i=0, \cdots, I$.  

So we have 
$$f^K(y)=f^{-K}[f^{2K}(y), f^K(x_m')]$$ for some $x_m'\in [[x_m]]$.  Define 
$$u_i=f^{-K}[f^{K}(p_i), f^K(x_m')]$$
for all $i=0, \cdots, I$.  
Then by Lemma \ref{subset},
$$f^K(u_i)\in f^K\langle p_i, [[x_m]] \rangle \subseteq f^K\lceil p_i \rceil \subseteq [[f^K(p_i)]]$$
for all $i=0, \cdots, I$.  

It is easy to check that $f^{-K}[f^{K}(u_I), f^K(x_m'')])=f^{-K}[f^{K}(p_I), f^K(x_m'')])$ for all $x_m''\in [[x_m]]$, and that 
\begin{equation}\label{extra}
(f^K(y), u_I)\in X^u([[f^K(y)]],[[u_I]],\eta_1).
\end{equation}
Therefore
\begin{align*}
d^u([[f^K(y)]],[[u_I]]) & \leq d^u( \langle f^K(y), [[x_m]] \rangle , \lceil u_I \rceil) \\
&=\mathrm{sup}\lbrace d(f^{-K}[f^{2K}(y), f^K(x_m'')],f^{-K}[f^{K}(u_I), f^K(x_m'')]) \; | \\
& \qquad \qquad  x_m''\in [[x_m]]\rbrace\\
&=\mathrm{sup}\lbrace d(f^{-K}[f^{2K}(y), f^K(x_m'')],f^{-K}[f^{K}(z), f^K(x_m'')]) \; | \\
& \qquad \qquad x_m''\in [[x_m]]\rbrace\\
&\leq \mathrm{sup}\lbrace \sum d(f^{-K}[f^{2K}(p_i), f^K(x_m'')],f^{-K}[f^{K}(p_{i+1}), f^K(x_m'')])  \; | \\
& \qquad \qquad x_m''\in [[x_m]]\rbrace\\
& \leq \sum_i d^u(\langle p_i, [[x_m]], \langle p_{i+1}, [[x_m]] \rangle ) \\
& \leq \sum_i d^u(\lceil p_i \rceil, \lceil p_{i+1}\rceil) \\
& = l(p),
\end{align*}
and so $d^u([[f^K(y)]], [[u_I]]) \leq \mathrm{inf}\lbrace l(q) \; | \; q\in P(f^K(y),z),\; l(q)< \eta_1 \rbrace =\delta([[f^K(y)]], [[z]]).$ 

Moreover, \eqref{extra} gives
$$(y, f^{-K}(u_I))\in X^u([[y]],[[f^{-K}(u_I)]],\epsilon_1)$$
by our choice of $\eta_1$.  

So by Lemma \ref{subset}, 
\begin{align*}
\delta([[y]], [[f^{-K}(u_I)]]) & \leq d^u( \lceil y \rceil, \lceil f^{-K}(u_I)\rceil ) \\
& \leq d^u(f^{-K}[[f^K(y)]], f^{-K}[[u_I]]) \\
& \leq \lambda^K d^u( [[f^K(y)]], [[u_I]])\\
& \leq \lambda^K \delta([[f^K(y)]], [[z]]).
\end{align*}

And we had $f^{K}(u_I)\in f^{K}(\lceil z \rceil) \subseteq [[f^{K}(z)]]$, so that
$$\alpha^{2K}[[f^{-K}(u_I)]]=[[f^K(u_I)]]=[[f^K(z)]]=\alpha^K[[z]].$$

\subsection{Topological Conjugacy}

Since we have shown that $(X/_\sim, \alpha, \delta)$ satisfies Axioms 1 and 2, we use the notation of Section \ref{thmApf} for the inverse limit associated with this system.  That is, we denote
\[\widehat{X/_\sim} =\underleftarrow{\mathrm{lim}}\;X/_\sim \stackrel{\alpha}{\longleftarrow} X/_\sim \stackrel{\alpha}{\longleftarrow}  \cdots.\]
Moreover, $\hat{\alpha}:\widehat{X/_\sim} \rightarrow \widehat{X/_\sim}$ is given by 
\begin{align*}
\hat{\alpha}([[x_0]], [[x_1]], [[x_2]], \cdots ) & = (\alpha([[x_0]]), \alpha([[x_1]]), \alpha([[x_2]]), \cdots) \\
& =([[f(x_0)]], [[x_0]], [[x_1]], \cdots )  ,
\end{align*}
and
$$\hat{\delta}(\mathbf{x},\mathbf{y})=\sum_{k=0}^{K-1}\gamma^{-k}\delta'(\hat{\alpha}^{-k}(\mathbf{x}),\hat{\alpha}^{-k}(\mathbf{y})),$$
where 
$$\delta'(\textbf{x},\textbf{y})=\mathrm{sup}\lbrace \gamma^n \delta([[x_n]],[[y_n]]) \; | \; n\geq 0 \rbrace$$
and $K\geq 1$ and $0<\gamma<1$ are the Axioms 1 and 2 constants for $(X/_\sim, \alpha, \delta)$ .
 
Define $\omega : X \rightarrow \widehat{X/_\sim}$ by
$$\omega (x) = ([[x]], [[f^{-1}(x)]], [[f^{-2}(x)]], \cdots ).$$

\begin{lemma}
$\omega: (X,d) \rightarrow (\widehat{X/_\sim},\delta')$ is a homeomorphism.
\end{lemma}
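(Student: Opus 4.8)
The plan is to show that $\omega$ is a continuous bijection from a compact space onto a Hausdorff space, whence it is automatically a homeomorphism. First I would record that $\omega$ is well defined: since $\alpha([[f^{-(n+1)}(x)]]) = [[f(f^{-(n+1)}(x))]] = [[f^{-n}(x)]]$, the sequence $\omega(x)$ is a genuine element of $\widehat{X/_\sim}$. For continuity I would argue coordinate by coordinate: the $n$-th coordinate of $\omega$ is the composite $X \xrightarrow{f^{-n}} X \to X/_\sim$ of the homeomorphism $f^{-n}$ with the quotient map, and the latter is continuous by the proposition establishing that $\delta$ induces the quotient topology. Since $\delta'(\mathbf{x},\mathbf{y})=\sup_n \gamma^n\delta([[x_n]],[[y_n]])$ induces the product topology on $\widehat{X/_\sim}$ (exactly as $\hat{d}$ does in Section \ref{thmApf}), continuity of every coordinate gives continuity of $\omega$. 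As $(\widehat{X/_\sim}, \delta')$ is a compact metric space, hence Hausdorff, it then remains only to prove that $\omega$ is a bijection.

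For injectivity, suppose $\omega(x) = \omega(y)$, so that $f^{-n}(x) \in [[f^{-n}(y)]]$ for every $n \ge 0$. Applying Corollary \ref{K} to each such membership yields $f^{K-n}(x) \in X^s(f^{K-n}(y), \epsilon_0')$; letting $n$ range over all nonnegative integers, this says $f^{p}(x) \in X^s(f^{p}(y), \epsilon_0')$ for every integer $p \le K$. Unwinding the definition of the local stable set and letting $p \to -\infty$, I obtain $d(f^m(x), f^m(y)) \le \epsilon_0' \le \epsilon_X'$ for all $m \in \mathbb{Z}$. Hence $y$ lies in $X^s(x,\epsilon_X') \cap X^u(x, \epsilon_X')$, which by part (3) of Definition \ref{Smale} is the single point $[x,x]=x$; therefore $y = x$.

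For surjectivity, fix a compatible sequence $\mathbf{x} = ([[x_0]], [[x_1]], \ldots)$, so $[[f(x_{n+1})]] = [[x_n]]$ for all $n$. The crucial observation is that the sets $E_m := f^m([[x_m]])$ are nested: using that $f$ maps equivalence classes into equivalence classes (established while proving $\alpha$ well defined) together with the compatibility relation $f(x_{m+1}) \sim x_m$, one gets $E_{m+1} = f^m(f([[x_{m+1}]])) \subseteq f^m([[f(x_{m+1})]]) = f^m([[x_m]]) = E_m$. Each $[[x_m]]$ is closed (as noted after Corollary \ref{K}) and $f^m$ is a homeomorphism, so each $E_m$ is a nonempty compact subset of $X$; a nested family of nonempty compact sets has nonempty intersection, so there is a point $x \in \bigcap_{m\ge 0} E_m$. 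For this $x$ we have $f^{-m}(x) \in [[x_m]]$, i.e. $[[f^{-m}(x)]] = [[x_m]]$, for every $m$, which is exactly $\omega(x) = \mathbf{x}$.

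The routine parts are well-definedness, continuity, and the compact-to-Hausdorff wrap-up; the two substantive points are the injectivity argument, where the trick is to use Corollary \ref{K} to convert the asymptotic relation $\sim$ into a genuine two-sided orbit estimate that forces agreement via the bracket axiom, and the surjectivity argument, where the key idea is to realize the target sequence as the backward $f$-itinerary of a point in the intersection of the decreasing compact family $f^m([[x_m]])$. I expect the surjectivity nesting to be the one step that is easy to overlook but that makes the whole argument go through cleanly.
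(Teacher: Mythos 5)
Your proof is correct, and its skeleton --- a continuous bijection from a compact space to a Hausdorff space is a homeomorphism --- is exactly the paper's. The continuity and injectivity steps are minor variants of the paper's: for continuity, the paper runs an explicit estimate (choose $N$ with $\lambda^N<\epsilon$, use uniform continuity of the quotient map on the first $N$ coordinates, and bound the tail by $\lambda^n$), whereas you appeal to coordinate-wise continuity plus the fact that $\delta'$ metrizes the product topology; that fact is true (because $\delta\leq 1$ is bounded) but is asserted in the paper only for $\hat{d}$, so strictly speaking you are relying on the same kind of unproved-but-routine claim the paper does. For injectivity, both you and the paper use Corollary \ref{K} to convert $f^{-n}(x)\sim f^{-n}(y)$ for all $n\geq 0$ into $f^{-n}(x)\in X^s(f^{-n}(y),\epsilon_0')$ for all $n\geq 0$; the paper then finishes with the contraction axiom, getting $d(x,y)\leq \lambda^n\epsilon_0'\rightarrow 0$, while you finish with expansivity via the bracket, observing $y\in X^s(x,\epsilon_X')\cap X^u(x,\epsilon_X')=\lbrace x\rbrace$ by part (3) of Definition \ref{Smale}; both are sound. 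The genuinely different step is surjectivity. The paper approximates a given $\mathbf{z}$ by $\omega(f^N(z_N))$, shows $\delta'(\omega(f^N(z_N)),\mathbf{z})<\lambda^N$, extracts a convergent subsequence of $(f^N(z_N))$ by compactness of $X$, and then uses the already-established continuity of $\omega$ to identify the limit's image with $\mathbf{z}$. You instead exhibit a preimage directly as a point of the decreasing family $\bigcap_{m\geq 0}f^m([[x_m]])$ of nonempty compact sets, where the nesting comes from $f([[x_{m+1}]])\subseteq [[f(x_{m+1})]]=[[x_m]]$ and compactness comes from the closedness of the classes $[[\cdot]]$ noted after Corollary \ref{K}. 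Your route buys independence from the continuity of $\omega$ (which the paper's surjectivity argument needs), at the cost of invoking closedness of the equivalence classes, which the paper's approximation argument never uses; both arguments are complete and correct.
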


\begin{proof}
Since $(X,d)$ is compact and $(\widehat{X/_\sim},\delta')$ is a metric space, it suffices to prove that $\omega$ is continuous and bijective.  

\medskip

We begin with continuity.  Let $\epsilon>0$.  Choose $N\in\mathbb{N}$ such that $\lambda^N<\epsilon$.  Since the quotient map $X \rightarrow X/_\sim$ is uniformly continuous, there exists $\epsilon'>0$ such that $\delta([[x]],[[y]])<\epsilon$ if $d(x,y)<\epsilon'$.  And there exists $\epsilon''>0$ such that $d(x,y)<\epsilon''$ implies that $d(f^{-n}(x),f^{-n}(y))<\epsilon'$ for $n=0, \cdots, N-1$.  

So let $d(x,y)<\epsilon''$.  Then for $n=0, \cdots, N-1$ we have
$$\lambda^n \delta([[f^{-n}(x)]],[[f^{-n}(y)]]) < \lambda^n \epsilon < \epsilon.$$
And for $n\geq N$, we have
$$\lambda^n \delta([[f^{-n}(x)]],[[f^{-n}(y)]])\leq \lambda^n < \epsilon.$$
Hence $\delta'(\omega(x),\omega(y))=\mathrm{sup}\lbrace \lambda^n \delta([[f^{-n}(x)]],[[f^{-n}(y)]]) \; | \; n\geq 0 \rbrace\leq \epsilon.$

\medskip
For surjectivity, let $\textbf{z}=([[z_0]], [[z_1]], \cdots ) \in \widehat{X/_\sim}$.  Observe that for all $N\in \mathbb{N}$ and each \linebreak $0\leq m \leq N$, 
$$[[z_m]]=\alpha^{N-m}([[z_N]])=[[f^{N-m}(z_N)]].$$
Hence 
\begin{align*}
\delta'(\omega(f^N(z_N)),\textbf{z}) & =  \mathrm{sup} \lbrace \lambda^n\delta([[f^{N-n}(z_N)]],[[z_n]]) \; | \; n\geq 0 \rbrace \\
& =  \mathrm{sup} \lbrace \lambda^n\delta([[f^{N-n}(z_N)]],[[z_n]]) \; | \; n> N \rbrace \\
& < \lambda^N.
\end{align*}
That is, $\omega(f^N(z_N))\rightarrow \textbf{z}$ as $N \rightarrow \infty$.

However, since $(f^N(z_N))$ is a sequence in the compact space $X$, it has a convergent subsequence $f^{N_k}(z_{N_k}) \rightarrow y$.  By the continuity of $\omega$, 
$$\omega(f^{N_k}(z_{N_k})) \rightarrow \omega(y),$$
hence $\textbf{z}=\omega(y)$.  

\medskip
And finally, to prove injectivity suppose $\omega(x)=\omega(y)$.  Then $f^{-n}(x)\sim f^{-n}(y)$ for all $n\geq 0$.  In particular, $$f^{-(K+n)}(x)\sim f^{-(K+n)}(y)$$ 
for all $n\geq 0$, so that $f^{-n}(x)\in X^s(f^{-n}(y), \epsilon_0')$ by Lemma \ref{K}.  This implies
$$x\in X^s(y, \lambda^n\epsilon_0'),$$
and hence $d(x,y)\leq \lambda^n\epsilon_0'$, for all $n\geq 0$.  So $x=y$.  
\end{proof}

Now let's show that the following diagram commutes:
\[\begin{CD}
X @>f>> X\\
@VV\omega V @VV\omega V\\
\widehat{X/_\sim} @>\widehat{\alpha}>> \widehat{X/_\sim}
\end{CD}\]

Let $x\in X$.  Then 
\begin{align*}
\omega \circ f(x) & = \omega(f(x))\\
& = ([[f(x)]], [[x]], [[f^{-1}(x)]], \cdots )\\
& = \hat{\alpha}([[x]], [[f^{-1}(x)]], \cdots )\\
& = \hat{\alpha}\circ \omega(x).
\end{align*}

Therefore $(X,d,f)$ and $(\widehat{X/_\sim}, \hat{\delta}, \hat{\alpha})$ are topologically conjugate.

\bibliographystyle{amsplain}

\end{document}